\documentclass[notitlepage,11pt,reqno]{amsart}
\usepackage[foot]{amsaddr}
\usepackage{amssymb,nicefrac,bm,upgreek,mathtools,verbatim}
\usepackage[final]{hyperref}
\usepackage[mathscr]{eucal}
\usepackage{dsfont}
\usepackage[normalem]{ulem}
\usepackage{amsopn}

\usepackage[margin=1in]{geometry}
\allowdisplaybreaks
\raggedbottom

\newcommand{\stkout}[1]{\ifmmode\text{\sout{\ensuremath{#1}}}\else\sout{#1}\fi}
%%%%%%%%%%%%%%%%%%%%%%%%%%%%%%%%%%%%%%%%%%%%%%%%%%%%%%%%%%%%%%%%%%%%%%%%%%%%%%%%

\newtheorem{lemma}{Lemma}[section]
\newtheorem{theorem}{Theorem}[section]

\theoremstyle{definition}

\newtheorem{assumption}{Assumption}[section]

\newtheorem{example}{Example}[section]

\theoremstyle{remark}

%%%%%%%%%%%%%%%%%%%%%%%%%%%%%%%%%%%%%%%%%%%%%%%%%%%%%%%%%%%%%%%%%%%%%%%%%%%%%%%%
\numberwithin{theorem}{section}
\numberwithin{equation}{section}

%%%%%%%%%%%%%%%%%%%%%%%%%%%%%%%%%%%%%%%%%%%%%%%%%%%%%%%%%%%%%%%%%%%%%%%%%%%%%%%%
\hypersetup{
  colorlinks=true,
  citecolor=mblue,
  linkcolor=mblue,
  urlcolor = blue,
  anchorcolor = blue,
  frenchlinks=false,
  pdfborder={0 0 0},
  naturalnames=false,
  hypertexnames=false,
  breaklinks}
%%%%%%%%%%%%%%%%%%%%%%%%%%%%%%%%%%%%%%%%%%%%%%%%%%%%%%%%%%%%%%%%%%%%%%%%%%%%%%%%
\usepackage[capitalize,nameinlink]{cleveref}
%%%%%%%%%%%%%%%%%%%%%%%%%%%%%%%%%%%%%%%%%%%%%%%%%%%%%%%%%%%%%%%%%%%%%%%%%%%%%%%%
\usepackage[abbrev,msc-links,nobysame,citation-order]{amsrefs} 
%backrefs %citation-order %alphabetic
%%%%%%%%%%%%%%%%%%%%%%%%%%%%%%%%%%%%%%%%%%%%%%%%%%%%%%%%%%%%%%%%%%%%%%%%%%%%%%%%

% Per SIAM Style Manual, "section" should be lowercase
\crefname{section}{Section}{Sections}
\crefname{subsection}{Section}{Sections}
\crefname{condition}{Condition}{Conditions}
\crefname{hypothesis}{Hypothesis}{Conditions}
\crefname{assumption}{Assumption}{Assumptions}
\crefname{lemma}{Lemma}{Lemmas}
\crefname{fact}{Fact}{Facts}

% Per SIAM Style Manual, "Figure" should be spelled out in references
\Crefname{figure}{Figure}{Figures}

% Per SIAM Style Manual, don't say equation in front on an equation.
\crefformat{equation}{\textup{#2(#1)#3}}
\crefrangeformat{equation}{\textup{#3(#1)#4--#5(#2)#6}}
\crefmultiformat{equation}{\textup{#2(#1)#3}}{ and \textup{#2(#1)#3}}
{, \textup{#2(#1)#3}}{, and \textup{#2(#1)#3}}
\crefrangemultiformat{equation}{\textup{#3(#1)#4--#5(#2)#6}}%
{ and \textup{#3(#1)#4--#5(#2)#6}}{, \textup{#3(#1)#4--#5(#2)#6}}%
{, and \textup{#3(#1)#4--#5(#2)#6}}

% But spell it out at the beginning of a sentence.
\Crefformat{equation}{#2Equation~\textup{(#1)}#3}
\Crefrangeformat{equation}{Equations~\textup{#3(#1)#4--#5(#2)#6}}
\Crefmultiformat{equation}{Equations~\textup{#2(#1)#3}}{ and \textup{#2(#1)#3}}
{, \textup{#2(#1)#3}}{, and \textup{#2(#1)#3}}
\Crefrangemultiformat{equation}{Equations~\textup{#3(#1)#4--#5(#2)#6}}%
{ and \textup{#3(#1)#4--#5(#2)#6}}{, \textup{#3(#1)#4--#5(#2)#6}}%
{, and \textup{#3(#1)#4--#5(#2)#6}}

% Make number non-italic in any environment.
\crefdefaultlabelformat{#2\textup{#1}#3}
%%%%%%%%%%%%%%%%%%%%%%%%%%%%%%%%%%%%%%%%%%%%%%%%%%%%%%%%%%%%%%%%%%%%%%%%%%%%%%%%%
%\usepackage{refcheck}
%%%%%%%%%%%%%%%%%%%%%%%%%%%%%%%%%%%%%%%%%%%%%%%%%%%%%%%%%%%%%%%%%%%%%%%%%%%%%%%%%
%\makeatletter
%\newcommand{\refcheckize}[1]{%
%  \expandafter\let\csname @@\string#1\endcsname#1%
%  \expandafter\DeclareRobustCommand\csname relax\string#1\endcsname[1]{%
%    \csname @@\string#1\endcsname{##1}\@for\@temp:=##1\do{\wrtusdrf{\@temp}%
%\wrtusdrf{{\@temp}}}}%
%  \expandafter\let\expandafter#1\csname relax\string#1\endcsname}
%\newcommand{\refcheckizetwo}[1]{%
%  \expandafter\let\csname @@\string#1\endcsname#1%
%  \expandafter\DeclareRobustCommand\csname relax\string#1\endcsname[2]{%
%    \csname @@\string#1\endcsname{##1}{##2}\wrtusdrf{##1}\wrtusdrf{{##1}}%
%\wrtusdrf{##2}\wrtusdrf{{##2}}}%
%  \expandafter\let\expandafter#1\csname relax\string#1\endcsname}
%\makeatother
%
%\refcheckize{\cref}
%\refcheckize{\Cref}
%\refcheckizetwo{\crefrange}
%\refcheckizetwo{\Crefrange}
%%%%%%%%%%%%%%%%%%%%%%%%%%%%%%%%%%%%%%%%%%%%%%%%%%%%%%%%%%%%%%%%%%%%%%%%%%%%%%%%%
%%%%%%%%%%%%%%%%%%%%%%%%%%%%%%%%%%%%%%%%%%%%%%%%%%%%%%%%%%%%%%%%%%%%%%%%%%%%%%%%%
\newcommand{\vertiii}[1]{{\left\vert\kern-0.25ex\left\vert\kern-0.25ex\left\vert #1 
    \right\vert\kern-0.25ex\right\vert\kern-0.25ex\right\vert}}
\newcommand{\lamstr}{\lambda^{\!*}}% lambda with corrected 'star'
    % running cost

\newcommand{\Uadm}{\mathfrak Z}
\newcommand{\Act}{\mathcal{Z}}
\newcommand{\Usm}{\mathfrak Z_{\sf{sm}}}
\newcommand{\bUsm}{\overline{\mathfrak Z}_{\sf{sm}}}
%\newcommand{\Ussm}{\mathfrak{V}_{\mathsf{sm}}}

 % Invariant Measures

\newcommand{\cA}{{\mathcal{A}}}  % generator
  % generator
  % Borel Sets
\newcommand{\cB}{{\mathcal{B}}}  % Borel Maps
\newcommand{\sB}{{\mathscr{B}}}  % Ball
\newcommand{\cC}{{\mathcal{C}}}   % Continuous functions
   % Dense set in C^2
\newcommand{\sD}{{\mathscr{D}}}  % 
\newcommand{\sE}{{\mathscr{E}}} 
\newcommand{\sF}{{\mathfrak{F}}}   % sigma field
 % set of ergodic occupation measures
\newcommand{\cG}{{\mathcal{G}}}  % class of functions
  % class of invariant measures
\newcommand{\Hg}{{\mathcal{H}}}  % Hamiltonian
  % integral operator
  % discounted cost
  
\newcommand{\cK}{{\mathcal{K}}}  % partition set for running cost
\newcommand{\sK}{{\mathscr{K}}}  % compact set
\newcommand{\Lg}{{\mathcal{L}}}    % g\fBenerator
\newcommand{\Lp}{{L}}            % Lp
            % Lp local
\newcommand{\cT}{{\mathcal{T}}}
  % Poisson random measure
\newcommand{\cP}{{\mathcal{P}}}  % Probability measures
\newcommand{\sP}{{\mathscr{P}}}
\newcommand{\cS}{{\mathcal{S}}}
\newcommand{\Lyap}{{\mathcal{V}}}  % Lyapunov
\newcommand{\cX}{{\mathcal{X}}}

\newcommand{\RR}{\mathds{R}}
\newcommand{\NN}{\mathds{N}}

\newcommand{\Rd}{{\mathds{R}^{d}}}
\DeclareMathOperator{\Exp}{\mathbb{E}}
\DeclareMathOperator{\Prob}{\mathbb{P}}
\newcommand{\D}{\mathrm{d}}

\newcommand{\Ind}{\mathds{1}}   % indicator function
\newcommand{\Vt}{\vartriangle}  %intervals of R with respecto rates
\newcommand{\Sob}{{\mathscr W}}    % Sobolev Space
\newcommand{\Sobl}{{\mathscr W}_{\text{loc}}} % Sobolev Space(local)

\newcommand{\df}{:=}
\newcommand{\transp}{^{\mathsf{T}}}

\DeclareMathOperator*{\trace}{Tr}
\DeclareMathOperator*{\dist}{dist}

\newcommand{\order}{{\mathscr{O}}}
\newcommand{\sorder}{{\mathfrak{o}}}
\newcommand{\grad}{\nabla}
\newcommand{\uuptau}{{\Breve\uptau}}

\newcommand{\abs}[1]{\lvert#1\rvert}
\newcommand{\norm}[1]{\lVert#1\rVert}

%compose functions we use \comp

%%%%%%%%%%%%%%%%%%%%%%%%%%%%%%%%%%%%%%%%%%%%%%%%%%%%%%%%%%%%%%%%%%%%%%%%%%%%%%%%
\usepackage{color}
\definecolor{dmagenta}{rgb}{.4,.1,.5}
\definecolor{dblue}{rgb}{.0,.0,.5}
\definecolor{mblue}{rgb}{.0,.0,.7}
\definecolor{ddblue}{rgb}{.0,.0,.4}
\definecolor{dred}{rgb}{.7,.0,.0}
\definecolor{dgreen}{rgb}{.0,.5,.0}
\definecolor{Eeom}{rgb}{.0,.0,.5}

%\renewcommand{\ftn}[1]{\relax}

%%%%%%%%%%%%%%%%%%%%%%%%%%%%%%%%%%%%%%%%%%%%%%%%%%%%%%%%%%%%%%%%%%%%%%%%%%%%%%%%

%%%%%%%%%%%%%%%%%%%%%%%%%%%%%%%%%%%%%%%%%%%%%%%%%%%%%%%%%%%%%%%%%%%%%%%%%%%%%%%%
\newcommand{\ttl}{\Large Ergodic Risk-Sensitive Control for Regime-Switching Diffusions}
\begin{document}
\title[Risk-sensitive control for Switching Diffusions]
{\ttl}

\author[Anup Biswas]{Anup Biswas$^\dag$}
\address{$^\dag$Department of Mathematics,
Indian Institute of Science Education and Research,
Dr.\ Homi Bhabha Road, Pune 411008, India}
\email{anup@iiserpune.ac.in}

\author[Somnath Pradhan]{Somnath Pradhan$^\ddag$}
\address{$^\ddag$Department of Mathematics and Statistics,
Queen's University, Kingston, ON K7L 3N6, Canada}
\email{sp165@queensu.ca}

%%%%%%%%%%%%%%%%%%%%%%%%%%%%%%%%%%%%%%%%%%%%%%%%%%%%%%%%%%%%%%%%%%%%%%%%%%%%%%%%
\begin{abstract}
In this article, we study the ergodic risk-sensitive control problem for controlled regime-switching diffusions. Under a blanket stability hypothesis, we solve the 
associated nonlinear eigenvalue problem for weakly coupled systems and characterize the optimal stationary Markov controls via a suitable verification theorem.
We also consider the near-monotone case and obtain the existence of principal eigenfunction and optimal stationary Markov controls.
\end{abstract}
%%%%%%%%%%%%%%%%%%%%%%%%%%%%%%%%%%%%%%%%%%%%%%%%%%%%%%%%%%%%%%%%%%%%%%%%%%%%%%%%
\keywords{Principal eigenvalue, switching diffusions, verification results}

\subjclass[2000]{Primary 93E20, 60J60}

%%%%%%%%%%%%%%%%%%%%%%%%%%%%%%%%%%%%%%%%%%%%%%%%%%%%%%%%%%%%%%%%%%%%%%%%%%%%%%%%
\maketitle

%%%%%%%%%%%%%%%%%%%%%%%%%%%%%%%%%%%%%%%%%%%%%%%%%%%%%%%%%%%%%%%%%%%%%%%%%%%%%%%%

\section{Introduction}
In this article, we study the ergodic risk-sensitive stochastic optimal control problem for a large class of multidimensional controlled switching diffusions in $\Rd\times \cS$ where $\cS$ is a finite set
representing states of the random switching.
Since the seminal work \cite{HM-72}, a lot of effort has been devoted to risk-sensitive optimization problems. This is mainly due to its wide range of applications in various applied subjects like mathematical finance \cite{FS00}, large deviation \cite{Kaise-06}, missile guidance \cite{Speyer}, and several other fields. There is a substantial amount of work in the literature on this topic, see for instance \cite{ABBK-19,ABP-21,AB18,AB-19,AB18a,Biswas-11a,Biswas-11,FM95,Menaldi-05} for controlled diffusions, \cite{ABS19} for controlled jump-diffusions, \cite{SP19} for reflecting diffusions. However, to the best of our knowledge, the risk-sensitive optimal control problem in the context of controlled switching diffusions is not yet studied in the literature. In \cite{AGM93}, Ghosh et.\ al.\ have studied the risk-neutral counterpart of this problem\,. 

Though this topic has been around for quite sometime, some of the major issues such as uniqueness of value functions, verification results, variational representations, etc. for the (ergodic) risk-sensitive control problem were resolved fairly recently \cite{ABBK-19, ABS19, AB18, AB-19}, in the context of controlled diffusions. In \cite{ABS19} the authors have realized that these issues are related to the the (strict) monotonicity of principal eigenvalues with respect to the potential (or zeroth order term) of the associated nondegenerate elliptic operators. It is shown in \cite{Berestycki-15} that the monotonicity  of principal eigenvalues with respect to the potential term is strict in the bounded domains and it might not be the case when the domain is unbounded. In view of the above discussions for controlled diffusions, in order to address these issues for controlled switching diffusions one needs to analyze the continuity and monotonicity property of principal eigenvalue of the associated weakly coupled system of elliptic PDEs.

Owing to the demand of modeling, analysis, and computation of complex networked
systems, much attention has been devoted in studying regime switching diffusions. It is heavily used in modelling many practical systems  such as mathematical finance \cite{Merton,BDY09,DKR94}, wireless communications \cite{YKI04}, production planning \cite{SZ94}, predictive modelling \cite{MH12}\,.
In a recent comprehensive work \cite{ABP-21}, the authors have studied the monotonicity property of the generalized eigenvalue for a general linear weakly-coupled cooperative elliptic systems (which is related to the generator of certain regime switching diffusions) with respect to the potential term\,. The notion of monotonicity of principal eigenvalue for scalar differential equation were studied in 
\cite{ABS19} using probabilistic techniques. \cite{ABP-21} improves 
this work not only for weakly-coupled systems but also for a very general class of elliptic PDEs. 

In this article, our main goal is to establish the existence-uniqueness of value function and verification of the optimality of stationary Markov policies for ergodic risk-sensitive stochastic control problem where the underlying dynamics are given by regime switching diffusions in $\Rd \times \cS$ (see \eqref{E1.1}). We consider two situations.
In the first case, the controlled system is assumed to be geometrically ergodic (see Assumption~\ref{A1.1} below) and we provide a full characterization of the optimal control problem (see Theorem~\ref{T1.1} below). In the second case, no stability condition is imposed, but the cost function is assumed to be near-monotone. We solve the nonlinear eigen-equation in this case and find an optimal stationary Markov control as a measurable selector (see Theorem~\ref{T4.1} below).

The remaining part of the article is organized as follows: In the 
next section we introduce the model and our main result under the stability criterion. Section~\ref{S-bounded} deals with the Dirichlet eigenvalue problem in bounded domains whereas the eigenvalue problem 
in $\Rd$ and proof of \cref{T1.1} are discussed \cref{S-risk}. Finally, we consider the near-monotone cost in \cref{N-control}.

%%%%%%%%%%%%%%%%%%%%%%%%%%%%%%%%%%%%%%%%%%%%%%%%%%%%%%%%%%%%%%%%%%%%%%%%%%%%%%%%
\subsection*{Notation:}
We denote by $\uptau(A)$ the \emph{first exit time} of the process
$\{X_{t}\}$ from the set $A\subset\RR^{d}$, defined by
\begin{equation*}
\uptau(A) \,\df\, \inf\,\{t>0\,\colon X_{t}\not\in A\}\,.
\end{equation*}
The open ball of radius $r$ in $\RR^{d}$, centered at the origin,
is denoted by $\sB_{r}$, and we let $\uptau_{r}\df \uptau(\sB_{r})$,
and $\uuptau_{r}\df \uptau(\sB^{c}_{r})$.
Also $\trace S$ denotes the trace of a square matrix $S$.

The term \emph{domain} in $\RR^{d}$
refers to a nonempty, connected open subset of the Euclidean space $\RR^{d}$. 
For a domain $D\subset\RR^{d}$,
the space $\cC^{k}(D)$ ($\cC^{\infty}(D)$), $k\ge 0$,
refers to the class of all real-valued functions on $D$ whose partial
derivatives up to order $k$ (of any order) exist and are continuous.
Also, $\cC^{k}_{b}(D)$ ($\cC_b^{\infty}(D)$) is the class of functions
whose partial derivatives up to order $k$ (of any order) are continuous and bounded
in $D$, and $\cC_{\mathrm{c}}^k(D)$ denotes the subset of $\cC^{k}(D)$,
$0\le k\le \infty$, consisting of functions that have compact support.
In addition, $\cC_{\mathrm{o}}(\Rd)$ denotes the class of continuous
functions on $\Rd$ that vanish at infinity.
The space $\Lp^{p}(D)$, $p\in[1,\infty)$, stands for the Banach space
of (equivalence classes of) measurable functions $f$ satisfying
$\int_{D} \abs{f(x)}^{p}\,\D{x}<\infty$, and $\Lp^{\infty}(D)$ is the
Banach space of functions that are essentially bounded in $D$.
The standard Sobolev space of functions on $D$ whose generalized
derivatives up to order $k$ are in $\Lp^{p}(D)$, equipped with its natural
norm, is denoted by $\Sob^{k,p}(D)$, $k\ge0$, $p\ge1$.
In general, if $\mathcal{X}(Q)$ is a space of real-valued functions on $Q$, $\mathcal{X}_{\mathrm{loc}}$ consists of all functions $f$ such that $f\varphi\in\mathcal{X}$ for every $\varphi\in\cC_{\mathrm{c}}(\mathcal{X})$.
Likewise, we define $\Sobl^{k, p}(D)$. Also, we adopt the notation $\mathcal{X}(Q\times\cS)$ to indicate the space $(\mathcal{X}(Q))^N$, where $N$ is the cardinality of $\cS$, the corresponding norm on $\mathcal{X}(Q\times\cS)$ is defined by 
$\norm{f}_{\mathcal{X}(Q\times\cS)} \,\df\, \sum_{k\in\cS} \norm{f_{k}}_{\mathcal{X}(Q)}$. Let $f\in \cC(\Rd\times\cS)$, then by $f\gg 0$ we mean $f_{k} > 0$ for all $k\in\cS$.

%%%%%%%%%%%%%%%%%%%%%%%%%%%%%%%%%%%%%%%%%%%%%%%%%%%%%%%%%%%%%%%%%%%%%%%%%%%%%%%%
\subsection{Description of the problem}
The controlled switching diffusion process $(X_{t}, S_{t})$ in $\Rd\times \cS$ is governed by the following stochastic differential equations.
\begin{align}\label{E1.1}
\D X_t \, &= \, b(X_t,S_t,Z_t) \D t + \upsigma(X_t,S_t) \D W_t, \nonumber\\
\D S_t \, &= \, \int_{\RR} h(X_t,S_{t^-},Z_t,z)\cP(\D t, \D z),
\end{align}
for $t\geq 0$. Here
\begin{itemize}
\item[(i)] $S_{0}$ is a prescribed $\cS\, \df \, \{1,2,\dots,N\}$ valued random variable;
\item[(ii)] $X_{0}$ is a prescribed $\RR^d$ valued random variable;
\item[(iii)] $W$ is a $d$-dimensional standard Wiener process;
\item[(iv)] $\cP(\D t, \D z)$ is a Poisson random measure on $\RR_{+}\times\RR$ with intensity $\D t\times m(\D z)$, where $m$ is a Lebesgue measure on $\RR$;
\item[(v)] $\cP(\cdot,\cdot), W(\cdot), X_{0}, S_{0}$ are independent;
\item[(vi)] The function $h :\RR^d \times \cS \times \Act \times \RR \to \RR$ is defined by
\begin{equation*}
h(x,i,\xi,z)\,\df\,\begin{cases}
j - i & \text{if}\,\, z\in \Vt_{i,j}(x,\xi),\\
0 & \text{otherwise},
\end{cases}   
\end{equation*} where for $i,j\in\cS$ and fixed $x,\xi$, $\Vt_{i,j}(x,\xi)$ are left closed right open disjoint intervals of $\RR$ having length $m_{i,j}(x,\xi)$.
\item[(vii)] $M \df \, (m_{i,j})_{i,j\in\cS}$, where $m_{i,j}\geq  0$ if $i\ne j$, and $\sum_{j=1}^{N}m_{i,j} = 0$, for all $i\in\cS$, is the transition matrix of the controlled Markov chain $S_{t}$.
\item[(viii)] The control process $\{Z_t\}$ takes values in a compact metric space $\Act$, is progressively measurable with respect to $\sF_t \,\df\, \text{the\ completion\ of\ }
\sigma\{X_s, S_s, s\le t\} \text{\ relative\ to\ } (\sF, \Prob)\,$, and is \emph{non-anticipative}: for each $t \geq 0$,
 $\sigma\{Z_{s};s\le t\}$ is independent of
$\sigma\{W_{s} - W_{t}, \cP(A,B): A\in\cB([s, \infty)), B\in\cB(\RR), s\geq t\}$.
\end{itemize}
The process $Z$ is called an \emph{admissible} control,
and the set of all admissible control is denoted by $\Uadm$. 

We impose the following assumptions to guarantee existence of solution to \cref{E1.1}.
\begin{itemize}
%\item[(A1)]
\item[\hypertarget{A1}{{(A1)}}]
\emph{Local Lipschitz continuity:\/}
The function $\upsigma\,=\,\bigl[\upsigma^{ij}\bigr]\colon\RR^{d}\times\cS\to\RR^{d\times d}$,
$b\colon\Rd\times\cS\times\Act\to\Rd$, and $m_{i,j}\colon\Rd\times\Act\to\RR,\,\, i,j\in\cS$, are continuous and locally Lipschitz in $x$ with a Lipschitz constant $C_{R}>0$ depending on $R>0$.
In other words, with $\norm{\upsigma}\df\sqrt{\trace(\upsigma\upsigma\transp)}$,
we have
\begin{equation*}
\abs{b(x,k,\xi)-b(y,k,\xi)}^2 + \norm{\upsigma(x,k) - \upsigma(y,k)}^2
+ \abs{m_{i,j}(x,\xi) - m_{i,j}(y,\xi)}^2\,\le\, C_{R}\,\abs{x-y}^2
\end{equation*}
for all $x,y\in B_R$, $i,j,k\in\cS$ and $\xi\in\Act$.
%\ftn{SP: Shall we add a remark on what will happen when $\upsigma$ also depends on control? May be the existence issues for $d\geq 3$?}
\medskip
\item[\hypertarget{A2}{{(A2)}}]
\emph{Affine growth condition:\/}
$b(x,k,\xi)$ and $\upsigma(x,k)$ satisfy a global growth condition of the form
\begin{equation*}
\sup_{\xi\in\Act}\, \langle b(x,k,\xi),x\rangle^{+} + \norm{\upsigma(x,k)}^{2} \,\le\,C_0
\bigl(1 + \abs{x}^{2}\bigr) \qquad \forall\, k\in\cS, x\in\RR^{d}, 
\end{equation*}
for some constant $C_0>0$.

\medskip
\item[\hypertarget{A3}{{(A3)}}]
\emph{Nondegeneracy:\/}
For each $R>0$, it holds that
\begin{equation*}
\sum_{i,j=1}^{d} a^{ij}(x,k)\zeta_{i}\zeta_{j}
\,\ge\,C^{-1}_{R} \abs{\zeta}^{2} \qquad\forall\, k\in\cS, x\in B_{R}\,,
\end{equation*}
and for all $\zeta=(\zeta_{1},\dotsc,\zeta_{d})\transp\in\RR^{d}$,
where, $a\df \frac{1}{2}\upsigma \upsigma\transp$.
\end{itemize}
The ergodic behavior of $Y_t \,\df \, (X_t,S_t)$ has strong dependency on the coupling coefficients. 
We now define a matrix $\Tilde{M} \df (\Tilde{m}_{i,j}):\RR^d\times\Act\to\RR^{N\times N}$, where
\begin{equation*}
\Tilde{m}_{i,j}(x,\xi)\,\df\,\begin{cases}
m_{i,j}(x,\xi) & \text{if}\,\, i\ne j,\\
0 & \text{otherwise}.
\end{cases}   
\end{equation*}
In addition to the usual structural assumptions
\hyperlink{A1}{{(A1)}}--\hyperlink{A3}{{(A3)}}, we also assume that
\begin{itemize}

\item[\hypertarget{A4}{{(A4)}}]
The matrix $\Breve{M}(x):=(\breve{m}_{ij}(x))$,
where $\breve{m}_{ij}=\min_{\xi\in\Act}\tilde{m}_{ij}(x, \xi)$, is irreducible in $\RR^d$, that is, for every nonempty
$\cS_1, \cS_2\subset \cS$ satisfying $\cS_1\cap\cS_2=\emptyset$ and $\cS_1\cup\cS_2=\cS$ there exist $i_0\in \cS_1$ and $j_0\in\cS_2$ such that
$$|\{x\in\Rd\; :\; \breve{m}_{i_0j_0}(x)>0\}|>0.$$
\end{itemize}

It is well known that under hypotheses
\hyperlink{A1}{{(A1)}}--\hyperlink{A3}{{(A3)}},
\cref{E1.1} has a unique strong solution for
every admissible control (see for example, [\cite{GS72}, Chapter 3]) with $X\in C(\RR_{+};\RR^d), S_{t}\in \mathcal{D}(\RR_{+};\cS)$, where $\mathcal{D}(\RR_{+};\cS)$ is the space of all right continuous functions from $\RR_{+}$ to $\cS$ having left limit. By Markov control we mean an admissible control of the form $v(t,X_t,S_t)$ for some Borel measurable function $v:\RR_+\times\Rd\times\cS\to\Act$. If $v$ is independent of $t$, we call it a stationary Markov control and the set of all stationary Markov controls is denoted by $\Usm$. The hypotheses in \hyperlink{A1}{{(A1)}}--\hyperlink{A3}{{(A3)}} imply the existence of unique
strong solutions under Markov controls (see, \cite[Theorem~2.1]{AGM93}).

Let $c\colon\Rd\times\cS\times\Act\to\RR_+$ be a continuous function that is locally Lipschitz in $x$ uniformly with respect to $\xi\in\Act$, which represents the \emph{running cost}.
Given any admissible control $Z\in\Uadm$, we define the \emph{risk-sensitive criterion} by
\begin{equation*}
\sE_{x,k}(c, Z) \,\df\,
\limsup_{T\to\infty} \, \frac{1}{T}\,
\log \Exp_{x,k}^{Z}\left[e^{\int_0^T c(X_s, S_s, Z_s) \D s}\right],\end{equation*}
and the \emph{optimal value} is defined by
\begin{equation}\label{Eoptval}
\sE^*(c) \,\df\, \inf_{x\in\Rd, i\in\cS}\, \inf_{Z\in\Uadm}\, \sE_{x,i}(c,Z)\,.
\end{equation} 
For $f\in \cC^2(\Rd\times\cS)$, we define the operator $\Lg$ by
\begin{equation*}
(\Lg f)_k(x,\xi) \,\df\, \trace\bigl(a_{k}(x)\grad^2 f_{k}(x)\bigr) + b_{k}(x,\xi)\cdot \grad f_{k}(x) + \sum_{j=1}^{N} m_{k,j}(x,\xi)f_{j}(x), \quad \forall \,\, k\in\cS.
\end{equation*}
We also define the operator $\cA$ mapping
$\cC^2(\Rd\times\cS)$ to $\cC(\Rd\times\cS)$ by
\begin{equation}\label{EcA}
(\cA f)_k(x) \,\df\, \trace(a_{k}(x)\grad^2 f_{k}(x)) + \inf_{\xi\in\Act}\{b_{k}(x,\xi)\cdot\grad f_{k}(x) + c_{k}(x,\xi)f_{k}(x) + \sum_{j=1}^{N} m_{k,j}(x,\xi)f_{j}(x)\}.
\end{equation}
Also, for any stationary Markov control $v$, we define
\begin{equation}\label{EcM}
(\cA^v f)_k(x) \,\df\, \trace(a_{k}(x)\grad^2 f_{k}(x)) + b_{k}(x,v(x))\cdot\grad f_{k}(x) + c_{k}(x,v(x))f_{k}(x) + \sum_{j=1}^{N} m_{k,j}(x,v(x))f_{j}(x).
\end{equation}

In order to study the minimization problem we impose the following geometric stability condition on the dynamics.

%%%%%%%%%%%%%%%%%%%%%%%%%%%%%%%%%%%%%%%%%%%%%%%%%%%%%%%%%%%%%%%%%%%%%%%%%%%%%%%%
\begin{assumption}\label{A1.1}
There exists an inf-compact function $\ell\in\cC(\RR\times\cS)$ (i.e., for each $\hat{k}\in\RR$ the sub-level set $\{\ell \leq \hat{k}\}$ is either empty or compact), and a positive function $\Lyap \in \cC^2(\Rd\times\cS)$, $\Lyap\ge 1$ such that
\begin{equation*}
(\Lg \Lyap)_k(x,\xi) \,\le\, \beta \Ind_{\cK}(x)-\ell_{k}(x) \Lyap_{k}(x)\quad \forall \,\, k\in\cS, (x,\xi)\in \RR^d\times\Act
\end{equation*}
for some constant $\beta$ and compact set $\cK$.
Moreover, we have
\begin{equation*}
\ell(\cdot) - \sup_{\xi\in\Act} c_{k}(\cdot,\xi) \quad \text{is inf-compact}\quad \forall \,\,\, k\in\cS\,.
\end{equation*}
\end{assumption}

It is well known that, if $a$ and $b$ are bounded,
it might not be possible to find an unbounded function $\ell$
satisfying \cref{A1.1} (cf. \cite{ABS19}).
So, in such a case, we instead assume the following.

%%%%%%%%%%%%%%%%%%%%%%%%%%%%%%%%%%%%%%%%%%%%%%%%%%%%%%%%%%%%%%%%%%%%%%%%%%%%%%%%
\begin{assumption}\label{A1.2}
There exists a positive $\Lyap \in \cC^2(\Rd\times\cS)$, $\Lyap\ge 1$, such that
\begin{equation*}
(\Lg \Lyap)_k(x,\xi) \,\le\, \beta \Ind_{\cK}(x) - \gamma \Lyap_k(x)
\end{equation*}
for some positive constants $\beta$ and $\gamma$, and compact set $\cK$.
Moreover, the cost function $c$ is bounded and
\begin{equation*}
\norm{c_{k}}_{\infty} < \gamma \quad \forall \,\, k\in\cS.
\end{equation*}
\end{assumption}

Before we proceed further, let us
exhibit a class of dynamics satisfying \cref{A1.1} or \cref{A1.2}

%%%%%%%%%%%%%%%%%%%%%%%%%%%%%%%%%%%%%%%%%%%%%%%%%%%%%%%%%%%%%%%%%%%%%%%%%%%%%%%%
\begin{example}
Suppose that
$\max_{k\in\cS}\sup_{\xi\in\Act} b_{k}(x,\xi)\cdot x\le - \kappa \abs{x}^\alpha$
outside a compact set for some $\alpha\in (1, 2]$, and $\max_{k\in\cS}a_{k}$ is bounded. Taking $\Lyap_{k} (x)\,\df\,\exp(\theta \sqrt{\abs{x}^2+1})$ for all $k\in\cS$, it follows that
\begin{equation*}
\Lg_{k} \Lyap(x,\xi) \,\le\, \kappa_1 \biggl(\Ind_{\cK_1}(x)
+ \theta\frac{1}{\sqrt{\abs{x}^2+1}}
+ \theta^2 \frac{\abs{x}^2}{\abs{x}^2+1}\biggr)\Lyap_{k}(x) - \theta\frac{\abs{x}^{\alpha}}{\sqrt{\abs{x}^2+1}}\Lyap_{k}(x)
\end{equation*}
for some constant $\kappa_1$, and a compact set $\cK_1$.
Now for $\alpha=1$, if we choose $\theta$ to be sufficiently small we obtain \cref{A1.2}. If $\alpha>1$, we choose $\ell\sim\abs{x}^{\alpha-1}$, which satisfies \cref{A1.1}.  
\end{example}

Let us now state one of our main result of this article.

%%%%%%%%%%%%%%%%%%%%%%%%%%%%%%%%%%%%%%%%%%%%%%%%%%%%%%%%%%%%%%%%%%%%%%%%%%%%%%%%
\begin{theorem}\label{T1.1}
Grant either \cref{A1.1} or \cref{A1.2}.
Then we have the following.
\begin{itemize}
\item[(a)]
There exists a positive $\psi\in \cC^2(\Rd\times\cS)\cap\order(\Lyap)$, satisfying
\begin{equation}\label{ET1.1A}
(\cA \psi)_k(x) \,=\, \lambda^* \, \psi_{k}(x)\ \ \text{ in\ }\Rd\,, \quad\forall\,\, k\in\cS.
\end{equation}
Let  $\bUsm$ denote the class of stationary Markov control
$v$ satisfying 
\begin{align*}
\inf_{\xi\in\Act}\, \bigl\{b_{k}(x,\xi)\cdot \grad \psi_{k}(x) + & c_{k}(x,\xi) \psi_{k}(x) + \sum_{j=1}^{N} m_{i,j}(x,\xi)\psi_{j}(x)\bigl\} \,=\, b_{k}(x,v(x,k))\cdot\grad \psi_{k}(x) \\ 
& + c_{k}(x,v(x,k)) \psi_{k}(x) + \sum_{j=1}^{N} m_{i,j}(x,v(x,k))\psi_{j}(x) \quad \text{a.e.\ in\ } x\in\Rd,\, k\in\cS\,.
\end{align*}
\item[(b)] Any member of $\bUsm$ is an optimal control.
\item[(c)]
There exists a unique (after normalization) positive
$\psi\in\cC^2(\Rd\times\cS)\cap\order(\Lyap)$ satisfying \cref{ET1.1A}.
\item[(d)] Every optimal $v\in\Usm^{*}$ belongs to  $\bUsm$, where $\Usm^{*}$ is the collection of all optimal stationary Markov controls.
\end{itemize}
\end{theorem}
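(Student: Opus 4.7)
The plan is to solve \eqref{ET1.1A} by a Dirichlet approximation on a nested sequence of balls, verify optimality of every $v\in\bUsm$ via an It\^o/Feynman--Kac calculation, and settle uniqueness and the converse characterization of $\Usm^*$ by strict monotonicity of the principal eigenvalue for weakly coupled cooperative systems in the sense of \cite{ABP-21}. For part~(a), I would invoke the Dirichlet eigenvalue theory developed in \cref{S-bounded} to obtain, on each ball $\sB_R$, a positive principal eigenpair $(\lambda_R, \psi^R)$ with $(\cA\psi^R)_k=\lambda_R\,\psi^R_k$ on $\sB_R$ and $\psi^R=0$ on $\partial\sB_R$. After normalizing $\psi^R$ at a fixed reference point, interior $\Sob^{2,p}$ and Harnack estimates for the cooperative system (whose cooperativity and irreducibility come from \hyperlink{A4}{{(A4)}}) give local $\cC^{1,\alpha}$ compactness; passing to the limit $R\to\infty$ then produces a positive $\psi\in\cC^2(\Rd\times\cS)$ with $(\cA\psi)_k=\lambda^*\psi_k$, while the Lyapunov estimate from \cref{A1.1} (or \cref{A1.2}) controls $\{\lambda_R\}$ and delivers the growth $\psi\in\order(\Lyap)$. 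A measurable selection applied to the Hamiltonian in \eqref{EcA} produces some $v\in\bUsm$, so $\bUsm$ is nonempty.

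For part~(b), given $v\in\bUsm$, I would apply It\^o's formula to
\begin{equation*}
\Phi_t \,\df\, \E^{-\lambda^* t}\,\psi_{S_t}(X_t)\,\exp\!\Bigl(\int_0^t c_{S_s}\bigl(X_s, v(X_s, S_s)\bigr)\,\D s\Bigr)
\end{equation*}
under the law of \eqref{E1.1} driven by $v$. The identity $(\cA^v\psi)_k=\lambda^*\psi_k$ kills the drift, so $\Phi$ is a positive local martingale, hence a supermartingale; taking expectations and logs yields $\sE_{x,k}(c,v)\le\lambda^*$. For the matching lower bound along arbitrary $Z\in\Uadm$, observe that the pointwise infimum in \eqref{EcA} gives $(\cA^{Z_t}\psi)_k\ge(\cA\psi)_k=\lambda^*\psi_k$; the same It\^o argument, localized at $\uptau_r$, yields a positive submartingale, and Jensen's inequality together with Fatou's lemma at $r\to\infty$ give $\sE_{x,k}(c,Z)\ge\lambda^*$. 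This establishes the optimality claim.

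For parts~(c) and~(d), the decisive input is strict monotonicity of the principal eigenvalue of the associated linear weakly coupled operator with respect to its zeroth-order term, as developed in \cite{ABP-21}. For uniqueness, if $\psi'\in\cC^2(\Rd\times\cS)\cap\order(\Lyap)$ is a second positive solution of \eqref{ET1.1A}, fix $v\in\bUsm$ associated with $\psi$: then $(\cA^v\psi)_k=\lambda^*\psi_k$ identifies $\lambda^*$ as the principal eigenvalue of $\cA^v$ in the class $\order(\Lyap)$, while $(\cA^v\psi')_k\ge(\cA\psi')_k=\lambda^*\psi'_k$ exhibits $\psi'$ as a positive supersolution. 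Strict monotonicity forces equality a.e., and simplicity of the principal eigenvalue then yields $\psi'=C\psi$ for some constant $C>0$. For~(d), suppose $v^*\in\Usm^*$ is optimal but $v^*\notin\bUsm$: then on a set $E\subset\Rd\times\cS$ of positive measure the infimum in \eqref{EcA} is not attained, so $(\cA^{v^*}\psi)_k(x)\ge\lambda^*\psi_k(x)$ with strict inequality on $E$; strict monotonicity then forces the principal eigenvalue of $\cA^{v^*}$ on $\Rd$ to exceed $\lambda^*$, which contradicts $\sE_{x,k}(c,v^*)=\lambda^*$.

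The principal obstacle is the strict monotonicity invoked in~(c)--(d): on the full space $\Rd$ it can fail for generic unbounded-domain problems (cf.~\cite{Berestycki-15}), so it must be extracted carefully from the blanket stability of \cref{A1.1}/\cref{A1.2} together with the cooperative irreducible structure from \hyperlink{A4}{{(A4)}}. The It\^o calculations in~(b) are routine in spirit, but some care is still needed to handle the pure-jump part driven by the Poisson random measure $\cP$ and, under \cref{A1.1}, the possibly unbounded running cost $c$.
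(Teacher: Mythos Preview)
Your outline for part~(a) matches the paper's construction via the Dirichlet eigenpairs $(\psi_n,\lambda_n)$ on $\sB_n$, Harnack compactness, and the Lyapunov bound $\psi^*\in\order(\Lyap)$.

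The genuine gaps are in parts~(b)--(d). In~(b), the supermartingale inequality $\Exp^v_{x,k}[\Phi_T]\le\psi_k(x)$ only yields $\sE_{x,k}(c,v)\le\lambda^*$ once you know $\inf_{\Rd\times\cS}\psi>0$; nothing in your construction guarantees this (under \cref{A1.1} the eigenfunction can vanish at infinity). The paper does \emph{not} argue directly with $\psi^*$ here: it first perturbs the cost to $\tilde c_m$ so that $\inf_\xi\tilde c_m-\lambda^*_m>0$ off a compact set, which forces $\inf_{\Rd}\psi^*_m>0$ by a hitting-time argument, deduces $\lambda^*_m=\sE^*(\tilde c_m)$, and then passes to the limit $m\to\infty$ to obtain $\lambda^*=\sE^*(c)$. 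Only after this identification does the paper compare $\lambda^*$ with $\lambda^*(c_{\tilde v})$ (the linear principal eigenvalue for $\cA^{\tilde v}$) to conclude optimality of $\tilde v\in\bUsm$. Your ``submartingale $+$ Fatou as $r\to\infty$'' for the lower bound is also imprecise; the paper gets $\lambda_n\le\sE_{x,k}(c,Z)$ cleanly from the \emph{bounded} Dirichlet eigenfunction $\psi_n$ vanishing on $\partial\sB_n$.

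In~(c), your inequality goes the wrong way: $(\cA^v\psi')_k\ge\lambda^*\psi'_k$ exhibits $\psi'$ as a positive \emph{sub}solution of $\cA^v-\lambda^*$, which does not interact with the supersolution-based definition of the principal eigenvalue, and strict monotonicity of the eigenvalue in the potential says nothing here. The paper's mechanism is different and avoids whole-space monotonicity altogether: using \cref{A1.1}/\cref{A1.2} it proves a stochastic representation
\[
\psi^*_k(x)\ \le\ \Exp^{v}_{x,k}\Bigl[e^{\int_0^{\uuptau_r}(c-\lambda^*)\,\D s}\,\psi^*(X_{\uuptau_r},S_{\uuptau_r})\Bigr]
\]
valid for \emph{every} $v\in\Usm$ and all large $r$ (the Lyapunov structure is used precisely to kill the ``escape to $\partial\sB_n$'' contribution). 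Combining this with the trivial reverse inequality for the minimizing selector of the competing $\psi$, a touching constant $\kappa$ plus the strong maximum principle and irreducibility give $\kappa\psi=\psi^*$.

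For~(d), you again lean on strict monotonicity in $\Rd$, which you correctly flag as the crux but do not supply. The paper sidesteps it: for optimal $v$ it builds the linear eigenpair $(\psi_v^*,\lambda^*(c_v))$ of $\cA^v$, identifies $\lambda^*(c_v)=\sE^*(c)=\lambda^*$ via the linear version of the perturbation argument, observes $(\cA\psi_v^*)_k\le(\cA^v\psi_v^*)_k=\lambda^*\psi^*_{v,k}$, and then applies the uniqueness from~(c) to get $\psi_v^*=\kappa\psi^*$; plugging back forces $v$ to be a minimizing selector, i.e.\ $v\in\bUsm$.
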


%%%%%%%%%%%%%%%%%%%%%%%%%%%%%%%%%%%%%%%%%%%%%%%%%%%%%%%%%%%%%%%%%%%%%%%%%%%%%%%%
\section{Dirichlet eigenvalue problems in bounded domains}\label{S-bounded}
In this section we consider the principal eigenvalue problem in the bounded domain which we will need later.
Let $D$ be a bounded smooth domain in $\Rd$. Without any loss of generality we may assume that $0\in D$.
Let us point out that  compactness of $\Act$ and non-negativity of $c$ are not required in this section. For this section the only hypothesis we require is stated as follows:
%%%%%%%%%%%%%%%%%%%%%%%%%%%%%%%%%%%%%%%%%%%%%%%%%%%%%%%%%%%%%%%%%%%%%%%%%%%%%%%%
\begin{assumption}%\label{A2.1}
The following hold.
\begin{itemize} 
\item[(i)] $a\in\cC(\bar{D}\times\cS)$ and $\Lambda^{-1} I \le a(x,k) \le \Lambda I$, $x\in\bar D$, $k\in\cS$ for some $\Lambda > 0$.
\item[(ii)]
$b:D\times\cS\times\Act\to\Rd$, $c:D\times\cS\times\Act\to\RR$ and $m_{ij}: D\times\Act\to \RR$ are continuous and bounded.
\item[(iii)] $\Breve{M}(x)$ is irreducible in $D$.
\end{itemize}
\end{assumption}

Next, we define the generalized Dirichlet principal eigenvalue $\lambda_D$ of $\cA$ on the domain $D$ (see also \cite[Appendix A]{ABP-21}). For $\lambda\in\RR$, let us first define
\begin{equation*}
\Uppsi_{D}^+(\lambda) \,\df\, \bigl\{\tilde{\psi}\in \Sobl^{2,d}(D\times\cS)\cap \cC(\bar{D}\times\cS)\,\colon\, \tilde{\psi}\gg 0 \text{\ in\ } D\times\cS\,,\ (\cA\tilde{\psi})_k(x) \,\le \,\lambda\tilde{\psi}_{k}(x) \text{\ a.e. in\ }  D, \quad \forall \,  k\in\cS \bigr\},
\end{equation*}
and the generalized Dirichlet principal eigenvalue in $D$ is defined as
\begin{equation}\label{E2.1}
\lambda_D \,\df\, \inf\,\bigl\{\lambda\in\RR\,\colon\,
\Uppsi_{D}^+(\lambda)\ne \varnothing\bigr\}\,.
\end{equation}
In \cref{T2.3} below we prove existence of a principal eigenfunction corresponding to $\lambda_D$. Toward this goal, we need the
following existence result.
%We now prove some existence result which we will in proving existence of Dirichlet principal eigenvalue. 

\begin{theorem}\label{T2.1}
Assume that $c\le 0$. Then for any $f\in \cC(\bar D\times\cS)$, there exists a unique solution $w\in\cC_0(\bar{D}\times\cS)\cap\Sob^{2,p}(D\times\cS),\, p\ge d$, satisfying
\begin{equation*}
(\cA w)_k(x) \,=\, f_{k}(x) \quad \text{in\ } D\,, \quad w_{k} = 0\quad \text{on\ } \partial{D}\quad\forall \,\, k\in\cS\,.
\end{equation*}
\end{theorem}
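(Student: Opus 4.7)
The strategy is a two-step reduction: solve the Dirichlet problem for the linearized cooperative weakly coupled system obtained by freezing the control, then invoke a Schauder fixed-point argument to handle the infimum over $\xi\in\Act$ in the definition of $\cA$.

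\textbf{Linearization.} For any Borel measurable $v=(v_k)_{k\in\cS}\colon D\to\Act^{\cS}$, the operator $\cA^v$ of \cref{EcM} is a linear, uniformly elliptic, weakly coupled system. Because $c\le 0$, $m_{kj}\ge 0$ for $k\ne j$, and $\sum_{j=1}^N m_{kj}=0$, its zero-order coupling matrix has non-negative off-diagonal entries and non-positive row sums; i.e., the system is cooperative with non-positive diagonal in the row-sum sense. By the classical theory of such systems (see, e.g., \cite[Appendix~A]{ABP-21}), the Dirichlet problem
\[
\cA^v w^v=f\ \text{in }D,\qquad w^v=0\ \text{on }\partial D,
\]
admits a unique $w^v\in\cC_0(\bar D\times\cS)\cap\Sob^{2,p}(D\times\cS)$ for every $p\in[d,\infty)$, with an a priori $\Sob^{2,p}$-bound depending only on $\norm{f}_\infty$, the ellipticity constant $\Lambda$, the $\Lp^\infty$-bounds of $b$, $c$, $m$, and $D$ --- in particular, uniform in $v$.

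\textbf{Fixed point.} Fix $p>d$ and put $\alpha\df 1-d/p$. Define $T\colon\cC^{1,\alpha}(\bar D\times\cS)\to\cC^{1,\alpha}(\bar D\times\cS)$ by $T(u)\df w^{v[u]}$, where $v[u]=(v_k[u])_{k\in\cS}$ is a Borel selector of
\[
\Argmin_{\xi\in\Act}\Bigl\{b_k(x,\xi)\cdot\grad u_k(x)+c_k(x,\xi)u_k(x)+\sum_{j=1}^{N} m_{kj}(x,\xi)u_j(x)\Bigr\},
\]
whose existence is guaranteed by Filippov's measurable selection theorem. By the Morrey embedding $\Sob^{2,p}\hookrightarrow\cC^{1,\alpha}$ the image $T(\cC^{1,\alpha})$ is precompact, and the uniform estimate from the Linearization step confines it to a large ball. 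Continuity of $T$ in the $\cC^{1,\alpha}$-topology follows from stability of $\Sob^{2,p}$-solutions to linear cooperative weakly coupled systems under $\Lp^\infty$-convergence of the coefficients, combined with a subsequential convergence argument for the minimizers $v[u_n]$. Schauder's theorem then produces a fixed point $w=T(w)$, which by construction satisfies $\cA w=f$ in $D$ with $w=0$ on $\partial D$.

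\textbf{Uniqueness and main obstacle.} Let $w^1, w^2$ be two solutions and pick a Borel $v^2$ attaining the infimum at $w^2$, so $\cA^{v^2} w^2=\cA w^2=f$ and $\cA^{v^2}w^1\ge\cA w^1=f$ by the defining minimality. Hence $\cA^{v^2}(w^1-w^2)\ge 0$ in $D$ with $w^1-w^2=0$ on $\partial D$; the weak maximum principle for cooperative weakly coupled elliptic systems with non-positive row-sum diagonal (with the irreducibility from \hyperlink{A4}{(A4)} yielding a strong form if required) forces $w^1\le w^2$, and the symmetric choice of a selector at $w^1$ gives the reverse inequality. The most delicate point in the whole argument is the continuity of $T$ above: it demands that measurable minimizers along sequences $u_n\to u$ in $\cC^{1,\alpha}$ be chosen so that the coefficients of the resulting linearized system converge in $\Lp^p$. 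This is standard but technical; it can be bypassed by discretizing $\Act$ via a finite approximation and passing to the limit using the uniform $\Sob^{2,p}$-bound.
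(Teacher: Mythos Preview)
Your fixed-point map $T(u)=w^{v[u]}$ hides a genuine gap, not a mere technicality: it is in general neither well-defined nor continuous. Two measurable selectors $v^1[u], v^2[u]$ of the same $\Argmin$ set yield different linear systems (the coefficients $b_k(\cdot,v[u])$, $c_k(\cdot,v[u])$, $m_{kj}(\cdot,v[u])$ act on $w$, not on $u$) and hence possibly different solutions $w^{v^1[u]}\neq w^{v^2[u]}$. Even after fixing one selector, when $u_n\to u$ in $\cC^{1,\alpha}$ the linearized coefficients need not converge in $\Lp^\infty$ or even in measure: minimizers can jump on sets of positive measure wherever the $\Argmin$ for $u$ is not a singleton. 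A Young-measure argument only shows that subsequential limits of $T(u_n)$ solve $\cA^{\bar v}w=f$ for some \emph{relaxed} minimizer $\bar v$ associated with $u$, and identifying this $w$ with your chosen $T(u)$ is precisely the well-definedness issue again. Discretizing $\Act$ does not cure this: the same non-uniqueness and jump phenomena persist on level sets of the discrete Hamiltonian.

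The paper sidesteps this by never linearizing inside the fixed-point map. For $g\in\cC_0^1(\bar D\times\cS)$ it solves the fully decoupled scalar problems $\trace(a_k\nabla^2 w_k)=\Hg_k(g,f)$ with $\Hg_k(g,f)=-\inf_{\xi}\bigl\{b_k\cdot\nabla g_k+c_k g_k+\sum_j m_{kj} g_j\bigr\}+f_k$, and sets $\cG(g)=w$. Because an infimum of an equicontinuous family of affine functions of $(g,\nabla g)$ is continuous in that argument, $g\mapsto\Hg(g,f)$ is manifestly continuous from $\cC^1$ to $\cC$, so continuity and compactness of $\cG$ are immediate from scalar linear theory and no selector enters the definition of the map. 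The a priori bound is obtained via the Leray--Schauder alternative: boundedness of $\{g:g=\nu\,\cG g,\ \nu\in[0,1]\}$ is proved by linearizing the resulting equation with a selector and applying the ABP estimate for cooperative systems \cite[Theorem~1]{Sirakov}; here a selector is used only to feed a linear maximum principle, not to define a map that must be continuous. Your uniqueness argument via comparison with a selector at one of the two solutions is essentially the paper's.
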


\begin{proof}
For $g\in \cC_0(\bar{D}\times\cS)\cap \cC^1(\bar{D}\times S) (\,\df\,\cC_0^1(D\times\cS))$, let
\begin{equation*}
\Hg_{k} (g,f)(x)\,\df\, -\inf_{\xi\in\Act}\,\bigl\{b_{k}(x,\xi)\cdot \grad g_{k}(x) + c_{k}(x,\xi) g_{k}(x) + \sum_{j=1}^{N}m_{k,j}(x,\xi)g_{j}(x)\bigr\} + f_{k}(x)\,,\quad\forall \,\, k\in\cS.
\end{equation*} It is well known that \cite[Theorem~9.15]{GilTru}
there exists a unique solution
$w\in \cC_0(\bar{D}\times\cS)\cap\Sob^{2,p}(D\times\cS)$, $p>d$,  satisfying 
\begin{equation}\label{PT2.1A}
\trace (a_{k}\grad^2 w_{k})(x) \,=\, \Hg_{k} (g,f)(x) \quad \text{in}\, D,\quad \text{with}\,\, w_{k} = 0 \,\,\text{on}\,\, \partial{D},\quad \forall\,\, k\in\cS.
\end{equation}
Moreover, we have the following estimate  \cite[Theorem~9.14]{GilTru}
\begin{equation}\label{ET2.1A}
\norm{w}_{\Sob^{2,p}(D\times\cS)}
\,\le\, \kappa\bigl(\norm{w}_{L^\infty(D\times\cS)} + \norm{\Hg(g, f)}_{L^p(D\times\cS)}\bigr)\,,
\end{equation}
for some positive constant $\kappa=\kappa(p, D)$ which independent of $g$, $w$, and $f$.
Applying maximum principle \cite[Theorem~9.1]{GilTru} we deduce that 
\begin{equation*}
\norm{w}_{L^\infty(D\times\cS)} \,\le\, \kappa_1 \norm{\Hg(g,f)}_{L^d(D\times\cS)}
\end{equation*}
for some constant $\kappa_1$ which dependents on $a$ and $D$ . Thus from \cref{ET2.1A}, it follows that
\begin{equation}\label{ET2.1B}
\norm{w}_{\Sob^{2,p}(D\times\cS)} \,\le\, \kappa_2 \norm{\Hg(g,f)}_{L^{p}(D\times\cS)}
\end{equation}
for some constant $\kappa_2$. Now,
 let $\cG:\cC_0^1(D\times\cS)\to \cC_0(\bar{D}\times\cS)\cap\Sob^{2,p}(D\times\cS)$, $p>d$, denote the operator mapping $g\in \cC_0^1(D\times\cS)$ to the corresponding solution $w\in\cC_0(\bar{D}\times\cS)\cap\Sob^{2,p}(D\times\cS)$, $p>d$ (see \eqref{PT2.1A}).
Since $p> d$, by standard Sobolev embedding theorem, we have $\Sob^{2,p}(D\times\cS)\hookrightarrow\cC^{1,\alpha}(\bar{D}\times\cS)$ is compact for some $ 0 < \alpha < (1 - \frac{d}{p})$. Thus, from \cref{ET2.1B}, 
it is easily seen that $\cG$ is a compact and continuous operator. Next, we claim that the set
\begin{equation*}
\bigl\{g\in \cC_0^1(D\times\cS)\,\colon\, g=\nu\, \cG g \text{\ for some\ } \nu\in [0, 1]\bigr\}
\end{equation*}
is bounded in $\cC_0^1(D\times\cS)$.
If the claim is not true, then there must exists a sequence $(w_n, \nu_n)$ with $\norm{w_n}_{\cC_0^1(D\times\cS)}\to \infty$
and $\nu_n\to\nu\in[0,1]$ as $n\to\infty$.
We scale the solutions appropriately so that $\norm{w_n}_{\cC_0^1(D\times\cS)} = 1$. Thus, applying \cref{ET2.1B}, we extract a  subsequence of $\{w_n\}$ that converges to a nontrivial solution $w\in \Sobl^{2,p}(D)$ of
\begin{equation*}
\trace (a_{k}\grad^2 w_{k})(x) \,=\, - \nu\inf_{\xi\in\Act}\,\bigl\{b_{k}(x,\xi)\cdot \grad w_{k}(x) + c_{k}(x,\xi) w_{k}(x)+ \sum_{j=1}^{N}m_{k,j}(x,\xi)w_{j}(x)\bigr\} \quad \text{in}\, D,\end{equation*} 
with $w_{k} = 0$ on $\partial{D}$, for all $k\in\cS,$ for some $\nu\in [0,1]$.
But this contradicts to the ABP maximum principle (see \cite[Theorem 1]{Sirakov}). The result of \cite{Sirakov} is applicable since we can linearize the above equation by choosing a measurable selector 
(cf. \cite[Theorem~18.17]{AliBor}).
 This proves that the claim is true. 
Therefore, by the Schauder fixed point theorem, there exists a fixed point $w\in\cC_0(\bar{D}\times\cS)\cap\Sob^{2,p}(D\times\cS)$, $p > d$, of $\cG$. This proves the existence of a solution and the uniqueness follows from the ABP estimate (\cite[Theorem 1]{Sirakov}).
\end{proof}

%Next we prove the existence of a principal eigenpair for the weakly coupled system of elliptic pdes in the the smooth bounded domain in $\RR^d$. The main idea of the proof is to verify the hypotheses of the nonlinear Krein--Rutman theorem in \cite{A18}.

Let us now recall the nonlinear Krein--Rutman theorem from \cite{A18} which will be used to prove the existence of a principal eigenvalue.

%%%%%%%%%%%%%%%%%%%%%%%%%%%%%%%%%%%%%%%%%%%%%%%%%%%%%%%%%%%%%%%%%%%%%%%%%%%%%%%%
\begin{theorem}\label{T2.2}
Let $\sP$ be a nonempty cone in an ordered Banach space $\cX$.
Suppose that $\cT\colon\cX\to\cX$ is order-preserving, $1$-homogeneous,
completely continuous map and
for some nonzero $u$, and $M>0$ we have $u\preceq M \cT u$.
Then there exists $\lambda>0$ and $x\ne 0$ in $\sP$ such that $\cT x=\lambda x$.
\end{theorem}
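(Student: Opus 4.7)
My plan for \cref{T2.2} is a perturbation-and-compactness argument of classical Krein--Rutman flavor. Throughout I would assume (as is standard for this theorem) that $\sP$ is proper, closed, and normal, and fix a continuous positive linear functional $\ell$ on $\cX$ with $\ell(u)>0$, produced by a Hahn--Banach separation argument applied to the convex body generated by $u$ relative to $-\sP$. The functional $\ell$ will be used purely as a normalization device.

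For each $\epsilon > 0$, I would solve the perturbed eigenvalue problem $\cT x + \epsilon u = \lambda x$ via Schauder's theorem applied to the rescaled map
\[
\Phi_\epsilon(x) \,\df\, \frac{\cT(x)+\epsilon u}{\ell(\cT(x)+\epsilon u)}
\]
defined on the closed convex set $\Sigma_R \df \{x\in\sP\colon \ell(x)=1,\ \norm{x}\le R\}$, with $R$ fixed via an a~priori bound from normality of $\sP$. Since the denominator is bounded below by $\epsilon\ell(u)>0$ and $\cT$ is completely continuous, $\Phi_\epsilon$ is a continuous compact self-map of $\Sigma_R$. Schauder's theorem then yields a fixed point $x_\epsilon\in\Sigma_R$ satisfying $\cT x_\epsilon + \epsilon u = \lambda_\epsilon x_\epsilon$ with $\lambda_\epsilon \df \ell(\cT x_\epsilon + \epsilon u)>0$.

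The decisive step, and the place where the hypothesis $u\preceq M\cT u$ enters essentially, is a uniform lower bound $\lambda_\epsilon \ge 1/M$. From $\lambda_\epsilon x_\epsilon \succeq \epsilon u$, applying $\cT$ and using its order-preservation, $1$-homogeneity, and the hypothesis in the form $\cT u \succeq u/M$, one gets $\lambda_\epsilon \cT x_\epsilon \succeq \epsilon u/M$, whence $x_\epsilon \succeq \epsilon u/(M\lambda_\epsilon^2)$; iterating yields, by induction,
\[
x_\epsilon \,\succeq\, \frac{\epsilon}{\lambda_\epsilon\,(M\lambda_\epsilon)^n}\,u \qquad \text{for every } n\in\NN.
\]
If $M\lambda_\epsilon<1$, the coefficient diverges, and normality of $\sP$ together with $u\ne 0$ force $\norm{x_\epsilon}\to\infty$, contradicting $\norm{x_\epsilon}\le R$. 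Hence $\lambda_\epsilon\ge 1/M$ uniformly in $\epsilon$. I expect this iterative step to be the main technical obstacle, because it is the only place where the hypothesis is used quantitatively and it is needed to prevent the limiting eigenpair from collapsing to zero.

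To finish, I pass to the limit $\epsilon\downarrow 0$. The normalization gives $\norm{x_\epsilon}\ge 1/\norm{\ell}>0$, while compactness of $\cT$ on $\Sigma_R$ makes $\lambda_\epsilon x_\epsilon - \epsilon u = \cT x_\epsilon$ precompact; combined with the lower bound on $\norm{x_\epsilon}$, this bounds $\lambda_\epsilon$ above as well. Along a subsequence $\lambda_\epsilon \to \lambda\ge 1/M$, and then $x_\epsilon = (\cT x_\epsilon + \epsilon u)/\lambda_\epsilon$ converges in norm to some $x\in\sP$ (along a further subsequence). The normalization $\ell(x_\epsilon)=1$ passes to $\ell(x)=1$, so $x\ne 0$, and $\cT x = \lambda x$ follows by continuity of $\cT$. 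This produces the eigenpair required by the theorem.
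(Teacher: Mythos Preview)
The paper does not prove \cref{T2.2}; it is quoted verbatim from the reference \cite{A18} as a known nonlinear Krein--Rutman theorem, and no argument is supplied. So there is no proof in the paper to compare your proposal against.

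Evaluating your outline on its own merits: the perturbation--compactness strategy is classical and the core idea is sound. The iteration producing $\lambda_\epsilon\ge 1/M$ is correct, though note it tacitly uses $u\in\sP$ (so that $\cT x_\epsilon + \epsilon u\in\sP$ and so that $x_\epsilon\succeq c_n u$ with $c_n\to\infty$ forces $\norm{x_\epsilon}\to\infty$ via normality); the theorem as stated omits this hypothesis, but it holds in the paper's only application (\cref{T2.3}) and is standard in the cited formulation. The one genuine soft spot is the self-mapping claim: normality of $\sP$ alone does \emph{not} guarantee that $\{x\in\sP:\ell(x)=1\}$ is norm-bounded, so choosing a single $R$ independent of $\epsilon$ with $\Phi_\epsilon(\Sigma_R)\subset\Sigma_R$ is not justified as written --- your lower bound on the denominator is only $\epsilon\,\ell(u)$, which blows up the quotient as $\epsilon\downarrow 0$. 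The standard remedy is to allow $R=R_\epsilon$ in the Schauder step, obtain the fixed point $x_\epsilon$, establish $\lambda_\epsilon\ge 1/M$ as you do, and then derive a uniform bound on $\norm{x_\epsilon}$ \emph{a posteriori}: if $\norm{x_{\epsilon_n}}\to\infty$ along a subsequence, rescale $y_n\df x_{\epsilon_n}/\norm{x_{\epsilon_n}}$, use $1$-homogeneity to write $\lambda_{\epsilon_n} y_n=\cT y_n + (\epsilon_n/\norm{x_{\epsilon_n}})u$, and extract a limit by complete continuity to reach a contradiction with $\ell(y_n)\to 0$. With that adjustment the argument closes.
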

The ordering in the above theorem is defined with respect to the cone $\sP$, that is,
for any $u_1, u_2 \in \cX$ we write $u_1\preceq u_2$ if $u_2- u_1 \in \sP$\,. 
%%%%%%%%%%%%%%%%%%%%%%%%%%%%%%%%%%%%%%%%%%%%%%%%%%%%%%%%%%%%%%%%%%%%%%%%%%%%%%%%

\begin{theorem}\label{T2.3}
There exists a unique $\varphi\in \cC_{0}(\bar{D}\times\cS)\cap\Sobl^{2,p}(D\times\cS)$, $p>d$,
satisfying
\begin{align*}
(\cA \varphi)_{k} (x) &\,=\, \lambda_D\, \varphi_k(x) \quad \mbox{in\ } D\,,
\\
\varphi_k &\,=\, 0\quad \text{on\ } \partial{D}\,,
\\
\varphi_k &\, > \, 0 \quad \text{in\ } D\,, \quad \varphi_k(0)=1\,,\quad\forall\quad k\in\cS.
\end{align*}
\end{theorem}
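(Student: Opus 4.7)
The plan is to produce the eigenpair via the nonlinear Krein--Rutman Theorem~\ref{T2.2} applied to a compact solution map constructed from Theorem~\ref{T2.1}, and then to identify the resulting eigenvalue with $\lambda_{D}$ through a sliding comparison argument. Choose $\beta$ so large that $c_{k}(x,\xi) - \beta \le 0$ on $\bar D \times \Act$ for every $k \in \cS$, and write $\cA_{\beta}$ for the operator obtained from $\cA$ by replacing $c$ with $c-\beta$. For $f \in \cC(\bar D\times\cS)$, set $\cT f \df w$, where $w \in \cC_{0}(\bar D\times\cS) \cap \Sob^{2,p}(D\times\cS)$, $p > d$, is the unique solution of $(\cA_{\beta} w)_{k} = -f_{k}$ in $D$ with zero boundary data delivered by Theorem~\ref{T2.1}. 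The map $\cT$ is positively $1$-homogeneous because the infimum in \eqref{EcA} commutes with scaling by positive constants; it is order-preserving on the cone $\sP \df \{f \in \cC(\bar D\times\cS) \colon f_{k} \ge 0,\ k \in \cS\}$ because a minimizing measurable selector for $\cA_{\beta}(\cT f_{1})$ reduces the difference $\cT f_{2} - \cT f_{1}$ to a non-negative solution of a linear weakly coupled cooperative system to which the ABP principle of \cite{Sirakov} applies; and it is completely continuous via the $\Sob^{2,p}$-estimate \eqref{ET2.1B} adapted to $\cA_{\beta}$, combined with the compact embedding $\Sob^{2,p}(D\times\cS) \hookrightarrow \cC^{1,\alpha}(\bar D\times\cS)$.

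To verify the remaining hypothesis of Theorem~\ref{T2.2}, set $u \df \cT \bm 1$: the strong maximum principle and Hopf boundary lemma for weakly coupled cooperative systems (available thanks to the irreducibility of $\Breve M$) yield $u_{k}(x) \ge c_{1}\,\dist(x,\partial D)$, and iterating produces $(\cT u)_{k}(x) \ge c_{2}\,\dist(x,\partial D)$; the $\cC^{1,\alpha}$-regularity of $u$ up to $\partial D$ supplies the matching upper bound $u_{k}(x) \le c_{3}\,\dist(x,\partial D)$, so $u \preceq (c_{3}/c_{2})\,\cT u$. Theorem~\ref{T2.2} then returns $\lambda > 0$ and a nonzero $\varphi \in \sP$ with $\cT \varphi = \lambda \varphi$; unwinding the definition of $\cT$ and using $1$-homogeneity converts this into $(\cA \varphi)_{k} = \mu\,\varphi_{k}$ in $D$ with $\mu \df \beta - \lambda^{-1}$ and $\varphi_{k} = 0$ on $\partial D$. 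The strong maximum principle for the system upgrades $\varphi \not\equiv 0$ to $\varphi_{k} > 0$ in $D$ for each $k$, legitimizing the normalization $\varphi_{k}(0) = 1$.

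The identification $\mu = \lambda_{D}$ and the uniqueness of $\varphi$ are both consequences of a sliding argument. The inclusion $\varphi \in \Uppsi_{D}^{+}(\mu)$ gives $\lambda_{D} \le \mu$; conversely, if some $\tilde\psi \in \Uppsi_{D}^{+}(\lambda')$ with $\lambda' < \mu$ existed, I would set $t^{*} \df \sup\{t > 0 \colon t\,\varphi \le \tilde\psi\}$, which lies in $(0,\infty)$ by the Hopf bounds above. Choosing a minimizing selector $v$ for $\tilde\psi$ to linearize its inequality, and using $1$-homogeneity of $\cA$, one finds that $w \df \tilde\psi - t^{*}\varphi$ is non-negative in $\bar D$, vanishes on $\partial D$, and satisfies the strict linear inequality $(\cA^{v} w)_{k} \le \lambda'\,w_{k} - (\mu - \lambda')\,t^{*}\varphi_{k}$ in $D$. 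The strong maximum principle and Hopf lemma for the resulting weakly coupled cooperative linear system then force either $w \equiv 0$ in $D$ (contradicting the strictly negative extra term $-(\mu-\lambda')t^{*}\varphi$) or a strictly negative inward normal derivative of $w$ on $\partial D$ (contradicting the maximality of $t^{*}$). Uniqueness of $\varphi$ follows identically, applied to two positive eigenfunctions at $\lambda_{D}$ with $\lambda' = \mu$: here the extra term drops out and the conclusion $w \equiv 0$ gives proportionality, with the normalization fixing the multiplier. The main obstacle throughout is the strong maximum principle and Hopf boundary lemma for weakly coupled cooperative HJB systems; assumption~(iii) on the irreducibility of $\Breve M$ is indispensable here, as it is exactly what allows positivity to propagate between components.
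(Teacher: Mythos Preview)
Your proof is correct and follows essentially the same strategy as the paper: construct a compact, order-preserving, positively $1$-homogeneous solution map via Theorem~\ref{T2.1} (after shifting $c$ to be nonpositive), apply the nonlinear Krein--Rutman Theorem~\ref{T2.2} to produce an eigenpair, and then run a sliding comparison to identify the eigenvalue with $\lambda_D$ and obtain uniqueness. The only tactical differences are that the paper verifies the seed condition $u\preceq M\cT u$ more simply by taking $u$ compactly supported in $D$ (so that $\cT u\gg0$ immediately dominates a multiple of $u$), and in the sliding step the paper avoids boundary Hopf considerations for the competitor $\tilde\psi\in\Sobl^{2,d}$ by instead applying the small-measure ABP principle on $D\setminus K$; your Hopf-based variants work but are slightly heavier.
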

\begin{proof} Let us first assume that $c\le 0$. Also, let $\cX=\cC_0(D\times\cS)$ and $\sP$ be the cone of non-negative functions.
For given $u\in\cC_0(D\times\cS)$, we denote $\cT u = w \in \cC_0(\bar{D}\times\cS)\cap\Sob^{2,p}(D\times\cS)$ to be the solution of
\begin{equation}\label{ET2.3A}
(\cA w)_k(x) \,=\, - u_{k}(x)\quad \text{in\ } D\,, \quad \text{and\ } w_{k} = 0 \text{\ on\ } \partial{D}\,,\quad \forall\quad k\in\cS.
\end{equation}
In view of \cref{T2.1} it is clear that the above map is well defined.
Now by ABP maximum principle \cite[Theorem 1]{Sirakov}, we obtain
\begin{equation}\label{ET2.3ABP}
\sup_{D}\, \max_{k\in\cS}\, \abs{w_{k}} \,\le\, \kappa\, \sup_{D}\, \max_{k\in\cS}\, \abs{u_{k}}
\end{equation}
for some constant $\kappa$, which does not depent on $w$, or $u$.
Next, For some minimizing selector $v:\RR^d\times\cS\to\Act$ of \cref{ET2.3A} it follows that, for all $k\in\cS$
\begin{align*}
\trace\bigl(a_{k}(x)\grad^2 w_{k}(x)\bigr) + b_{k}(x,v(x,k))\cdot \grad w_{k}(x) + (c_{k}(x,v(x,k)) & + m_{k,k}(x,v(x,k)))w_{k}(x) \\
& \,=\, - u_k - \sum_{j\ne k} m_{k,j}(x,v(x,j))w_{j}(x)\,.
\end{align*}
Thus, by standard Sobolev estimate \cite[Theorem~9.14]{GilTru} and \cref{ET2.3ABP}, we deduce that
\begin{equation*}
\norm{w}_{\Sob^{2,p}(D\times\cS)} \,\le\, \kappa_1 \sup_{D}\,\max_{k\in\cS}\, \abs{u_{k}}
\end{equation*}
for some positive constant $\kappa_1$.
Above estimate implies that $\cT $ is a continuous and compact operator. From the definition of $\cT$ it is easy to see that $\cT$ is $1$-homogeneous, that is, $\cT(\hat{\lambda} u) = \hat{\lambda} \cT(u)$ for all $\hat{\lambda} \geq 0$\,. Also, let $\hat{u}_i\in\cC_0(D\times\cS)$ such that $\cT(\hat{u}_i) = \hat{w}_i$\,, i=1,2 and $\hat{u}_1 \leq \hat{u}_2$. Thus, we have 
\begin{equation*}
(\cA \hat{w}_1)_k(x) \,\ge\, (\cA \hat{w}_2)_k(x),\quad\forall \,\,k\in\cS.
\end{equation*}
The concavity of $\cA$ implies that
\begin{equation}\label{ET2.3B}
(\cA (\hat{w}_2 - \hat{w}_1))_k(x) \,\le\, 0\, \text{in}\,\, D,\,\, \forall\,\ k\in\cS.
\end{equation}
Thus, we see from \cite[Theorem 1]{Sirakov} that $\hat{w}_2\ge \hat{w}_1$. This in particular implies $\cT(\sP)\subset\sP$. In fact, when the inequality is strict, that is,  $\hat{u}_1\lneq \hat{u}_2$, then $\hat{u}_{1,k} < \hat{u}_{2,k}$ for some $k\in\cS$. Letting $\tilde{w} = \hat{w}_2 - \hat{w}_1$, we see that 
\begin{equation*}
\trace (a_{k}\grad^2 \tilde{w}_{k})(x) - M\abs{\grad \tilde{w}_{k}} - \norm{c_{k} + m_{k,k}}_\infty \tilde{w}_{k}(x)
\,\le\, (\cA \tilde{w})_k(x) \,\le\, (\hat{u}_{1,k} - \hat{u}_{2,k})(x) \lneq 0\,,
\end{equation*} 
where $M$ is a positive constant such that $M \,\geq\, \sup_{D\times\Act}\max_{k\in\cS}\abs{b_{k}(x,\xi)}$.
Then by a version of Hopf's boundary lemma (see for example, \cite[Lemma~3.1]{QS08}), we must have $\tilde{w}_{k} = \hat{w}_{2,k} - \hat{w}_{1,k} > 0$ in $D$. Moreover, since the system is irreducible we claim that $\tilde{w}_{k} > 0$ for all $k\in\cS$. More precisely, due to the irreducibility property of the system it is clear that for each $k\in\cS$ there exist $j\in\cS$ such that $m_{j,k} \gneq 0$. We know that $\tilde{w}_{j}\geq 0$, but if $\tilde{w}_{j} \equiv 0$ it contradicts the fact that $\cA_{j}\tilde{w} \leq 0$ (since $\tilde{w}_{k} > 0$). Continuing this process by irreducibility property of the system we obtain the claim.
 Therefore  $\hat{u}_1\lneq \hat{u}_2$ implies $\cT(\hat{u}_{2})\gg \cT(\hat{u}_{1})$.

Now consider a function $u\in \sP$ which has compact support in $D$ and $u \neq 0$. Then, it follows from above discussion that $w = \cT u \gg 0$ in $D\times\cS$.
Thus, we can find $M>0$ satisfying $M\cT u - u \gg 0$ in $D\times\cS$. Therefore, by the Krein--Rutman theorem (see \cref{T2.2} ),  there exist $\lambda>0$ and $ \phi \gg 0$ in $\cC_0(\bar{D}\times\cS)\cap\Sob^{2,p}(D\times\cS)$ such that
\begin{equation*}
(\cA \phi)_{k} \,=\, \lambda \phi_{k}\quad \text{in\ } D\,,\quad \forall\,\,k\in\cS.
\end{equation*}
Now, as we know that $c$ is bounded, replacing $c$ by $c -2\norm{c}_{L^\infty( D\times\cS)}$, it follows from the above discussion that there 
exists $\lambda\in\RR$ and $\varphi\in \cC_0(\bar{D}\times\cS)\cap\Sobl^{2,p}(D\times\cS)$, $p>d$, satisfying
\begin{equation*}
(\cA \varphi)_{k} \,=\, \lambda\, \varphi_{k} \quad \text{in\ } D\,, \qquad \varphi_{k} > 0 \quad \mbox{in\ } D\,,\qquad \text{and}\,\, \varphi_{k} \,=\,0\quad \text{on}\,\, \partial{D}\,,\quad\forall \,\, k\in\cS. 
\end{equation*}

 Next, we want to prove that if there exists pair
$(\tilde{\lambda}, w)\in \RR\times \cC(\bar{D}\times\cS)\cap\Sobl^{2,d}(D\times\cS)$, with $ w \gg 0$ in $D\times\cS$, such that
\begin{equation*}
(\cA w)_{k}(x) \le  \tilde{\lambda} w_{k}(x) \,\,\, \text{in} \,\, D, \quad\forall \,\,\, k\in\cS.
\end{equation*} Then either $\tilde{\lambda} > \lambda$ or $\tilde{\lambda} = \lambda$, $w = t\varphi$ for some positive constant $t$.

It is enough to show that if $ w\in \cC(\bar{D}\times\cS)\cap\Sobl^{2,p}(D\times\cS)$ with $ w \gg 0$ in $D\times\cS$, and satisfies
\begin{equation*}
(\cA w)_{k}(x) \le  \lambda w_{k}(x) \,\,\, \text{in}\,\,\, D, \quad \forall \,\,\, k\in\cS.
\end{equation*} 
Then $w = t\varphi$ for some constant $t > 0$.

Define $u_{t} \,\df \, t\varphi - w$. Then for some suitable choice of $t > 0$ (small enough), we have $u_{t} \le 0$ in some compact set $K \subsetneq D$ with $|D\setminus K| < \epsilon$, where $\epsilon > 0$ is small number. Also, we have the following 
\begin{equation*}
\trace (a_{k}\grad^2 u_{t,k})(x) + \sup_{\xi\in\Act}\,
\bigl\{b_{k}(x,\xi)\cdot \grad u_{t,k}(x)
+ (c_{k}(x,\xi)-\lambda) u_{t,k}(x) + \sum_{j=1}^{N}m_{k,j}(x,\xi)u_{t,j}\bigr\} \,\ge\,  0\,.
\end{equation*}
For any maximizing selector of the above equation, choosing $\varepsilon$ sufficiently small, applying \cite[Theorem 1]{Sirakov} corresponding to the domain $D\setminus K$, we see that $u_t\le 0$ in $D\times\cS$. Thus, $u_t\le 0$ in $D\times\cS$ satisfies
\begin{equation*}
\trace (a_{k}\grad^2 u_{t,k})(x) + \sup_{\xi\in\Act}\,
\bigl\{b_{k}(x,\xi)\cdot \grad u_{t,k}(x)
- (c_{k}(x,\xi) + m_{k,k}(x,\xi) - \lambda)^{-} u_{t,k}(x)\bigr\} \,\ge\,  0\,.
\end{equation*}
Then by strong maximum principle \cite[Theorem~9.6]{GilTru},
we must either have $u_{t,k}=0$ or $u_{t,k}<0$ in $D$. By the previous discussion, the cooperative nature of the system implies that if $u_{t,k} < 0$ for some $k\in\cS$ then $u_{t,j} < 0$ for all $j\in\cS$. Thus we either have $u_{t}=0$ or $u_{t} \ll 0$ in $D\times\cS$.
Suppose that the second option holds. Then we may define
\begin{equation*}
\mathfrak{t} \,=\, \sup\,\{t>0 \,\colon\, u_t \ll 0 \quad \text{in\ } D\times\cS\}\,.
\end{equation*}
By the above argument, $\mathfrak{t}>0$, and by strong maximum principle \cite[Theorem~9.6]{GilTru} we must have either $u_\mathfrak{t}=0$ or $u_\mathfrak{t}\ll 0$.
If $u_\mathfrak{t} < 0$, then for some $\delta>0$ we have $u_\mathfrak{t+\delta} \ll 0$ in $K\times\cS$,
and therefore, repeating the argument above, we obtain $u_\mathfrak{t  + \delta} \ll 0$ in $D\times\cS$.
This contradicts the definition of $\mathfrak{t}$. So the only possibility is $u_\mathfrak{t} = 0$. This indeed implies that $\lambda = \lambda_{D}$. This completes the prove.
\end{proof} 
Next theorem shows that the Dirichlet principal eigenvalue $\lambda_D$ is monotone with respect to the potential term $c$\,.
\begin{theorem}\label{T2.4}
Let $c\lneq c'$. Suppose that $\lambda_D(c)$ \textup{(}$\lambda_D(c')$\textup{)}
is the Dirichlet principal eigenvalue. Then $\lambda_D(c)<\lambda_D(c')$.
\end{theorem}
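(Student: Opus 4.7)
The plan is to obtain $\lambda_D(c)\le\lambda_D(c')$ directly from \eqref{E2.1}, and then to upgrade this to strict inequality by contradiction: assume equality, use the uniqueness of the principal eigenpair from \cref{T2.3} to force the two principal eigenfunctions to coincide, and finally reduce strictness to the known strict monotonicity of the Dirichlet principal eigenvalue for linear weakly coupled cooperative systems via a measurable minimizing selector.

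For the non-strict inequality, let $\varphi'$ denote the principal eigenfunction associated with $c'$ from \cref{T2.3}. Since $c\le c'$ and $\varphi'\gg 0$ in $D$, for each $k\in\cS$,
\begin{equation*}
(\cA\varphi')_k(x) \,\le\, \trace(a_k\nabla^2\varphi'_k) + \inf_{\xi\in\Act}\bigl\{b_k(x,\xi)\cdot\nabla\varphi'_k + c'_k(x,\xi)\varphi'_k + \textstyle\sum_{j} m_{k,j}(x,\xi)\varphi'_j\bigr\} \,=\, \lambda_D(c')\,\varphi'_k\,,
\end{equation*}
where the left-hand side refers to $\cA$ built with the potential $c$. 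This places $\varphi'\in\Uppsi_D^+(\lambda_D(c'))$ for $\cA$ with potential $c$, and \eqref{E2.1} yields $\lambda_D(c)\le\lambda_D(c')$.

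Now suppose for contradiction $\lambda_D(c)=\lambda_D(c')=:\lambda$. The same inequality shows $\varphi'\in\Uppsi_D^+(\lambda)$ for the $c$-operator, so invoking the uniqueness step at the end of the proof of \cref{T2.3} we conclude $\varphi'=t\varphi$ for some $t>0$, where $\varphi$ is the principal eigenfunction for $c$. After renormalization assume $\varphi=\varphi'$, so that $\cA\varphi=\lambda\varphi$ whether $\cA$ is built from $c$ or from $c'$. Pick a measurable selector $v\colon D\times\cS\to\Act$ attaining the infimum in $(\cA\varphi)_k$ built with $c'$, available by the Filippov-type argument used in the proof of \cref{T2.1} (cf.\ \cite[Theorem~18.17]{AliBor}). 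Then $\varphi$ solves the linear weakly coupled cooperative system
\begin{equation*}
L^v_k w + c'_k(\cdot,v)\,w_k \,=\, \lambda\,w_k \,\text{ in }\, D, \qquad w_k = 0\,\text{ on }\,\partial D,\quad k\in\cS,
\end{equation*}
where $L^v_k w \df \trace(a_k\nabla^2 w_k) + b_k(\cdot,v)\cdot\nabla w_k + \sum_{j}m_{k,j}(\cdot,v)w_j$. Hence $\lambda$ equals the linear Dirichlet principal eigenvalue $\lambda_D^{\mathrm{lin}}(L^v+c'(\cdot,v))$.

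To close, let $\tilde\varphi$ be the positive Dirichlet principal eigenfunction of the perturbed linear system $L^v+c(\cdot,v)$ with eigenvalue $\tilde\lambda$ (existence and uniqueness follow by the same Krein--Rutman scheme as in \cref{T2.3}). Since $c\le c'$, we have $\cA\tilde\varphi\le L^v\tilde\varphi+c(\cdot,v)\tilde\varphi=\tilde\lambda\tilde\varphi$ (with $\cA$ built from $c$), so \eqref{E2.1} gives $\lambda_D(c)\le\tilde\lambda$. The strict monotonicity of the Dirichlet principal eigenvalue for linear weakly coupled cooperative systems with respect to the zeroth-order term --- the bounded-domain form of the main result of \cite{ABP-21} --- then yields $\tilde\lambda<\lambda_D^{\mathrm{lin}}(L^v+c'(\cdot,v))=\lambda$, whence $\lambda_D(c)\le\tilde\lambda<\lambda$, contradicting $\lambda_D(c)=\lambda$. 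The main obstacle is the very last step: one needs the hypothesis $c\lneq c'$ on $D\times\cS\times\Act$ to descend to $c(\cdot,v)\lneq c'(\cdot,v)$ along the chosen minimizer $v$. This is automatic when $c,c'$ do not depend on $\xi$, and in general uses the continuity of $c,c'$ together with the irreducibility in \hyperlink{A4}{\textup{(A4)}} to refine the measurable selection so that $v$ passes through a subset of positive measure on which $c'-c>0$.
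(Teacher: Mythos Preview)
Your argument coincides with the paper's through the identification $\varphi_{c'}=t\varphi_c$: both derive $\lambda_D(c)\le\lambda_D(c')$ from \eqref{E2.1}, and both invoke the uniqueness clause at the end of the proof of \cref{T2.3} under the equality assumption. The paper then concludes in one line that $\varphi_c=\varphi_{c'}$ is incompatible with $c\lneq c'$; you instead linearize at a $c'$-minimizing selector $v$ and appeal to strict monotonicity from \cite{ABP-21}. That detour is unnecessary, and the ``main obstacle'' you flag at the end is in fact fatal for it, not a technicality to be patched.

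Here is why your proposed fix cannot work. Once $\varphi:=\varphi_c=\varphi_{c'}$ with common eigenvalue $\lambda$, the two eigen-equations give, for every $(x,k)$,
\[
\inf_{\xi}\Bigl\{b_k\cdot\nabla\varphi_k+c_k\varphi_k+\textstyle\sum_j m_{k,j}\varphi_j\Bigr\}
\,=\,
\inf_{\xi}\Bigl\{b_k\cdot\nabla\varphi_k+c'_k\varphi_k+\textstyle\sum_j m_{k,j}\varphi_j\Bigr\}.
\]
If $\xi'=v(x,k)$ is \emph{any} minimizer of the right-hand side, evaluating the left-hand infimum at the same $\xi'$ yields $c'_k(x,\xi')\varphi_k(x)\le c_k(x,\xi')\varphi_k(x)$, and since $c\le c'$ and $\varphi_k>0$ this forces $c_k(x,v(x,k))=c'_k(x,v(x,k))$. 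Thus every $c'$-minimizing selector satisfies $c(\cdot,v)\equiv c'(\cdot,v)$ identically on $D\times\cS$, so no ``refinement of the measurable selection'' can produce $c(\cdot,v)\lneq c'(\cdot,v)$ while keeping $v$ a minimizer, and your appeal to strict linear monotonicity can never be triggered. The paper's direct route avoids the selector entirely; in the case where $c,c'$ do not depend on $\xi$ (which you already single out as unproblematic), the displayed equality of infima reduces to $(c_k-c'_k)\varphi_k\equiv 0$, i.e.\ $c=c'$, yielding the contradiction without any linear eigenvalue machinery.
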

\begin{proof}
Let $\varphi_{c}, \varphi_{c'}$ denote the principal eigenfunctions corresponding to $c, c',$ respectively. It is clear from \cref{E2.1} that $\lambda_D(c) \le \lambda_D(c')$. Suppose that $\lambda_D(c) \,=\, \lambda_D(c')$. Then, we obtain 
\begin{equation*}
(\cA \varphi_{c'})_{k}(x) \le  \lambda_{D}(c) \varphi_{c',k}(x) \,\,\, \text{in}\,\,\, D, \quad \forall \,\,\, k\in\cS.
\end{equation*}Now, from the proof of \cref{T2.3}, it follows that $\varphi_{c'} = t\varphi_{c}$ for some positive constant $t$. But this contradict the fact that $c\lneq c'$. Therefore, we have $\lambda_D(c) < \lambda_D(c').$
\end{proof}
We also have the following monotonicity property of the Dirichlet principal eigenvalue $\lambda_{D}$ with respect to the domains.
\begin{theorem}\label{T2.5}
Let $D_{1}\subsetneq D_{2}$. Then $\lambda_{D_1} < \lambda_{D_2}$.
\end{theorem}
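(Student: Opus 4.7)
The plan is to combine a simple restriction argument with the sliding/uniqueness technique that appears at the end of the proof of \cref{T2.3}. The non-strict inequality $\lambda_{D_1}\le\lambda_{D_2}$ is immediate: the principal eigenfunction $\varphi_2$ on $D_2$ furnished by \cref{T2.3} is strictly positive in $D_2\supset D_1$, lies in $\Sobl^{2,p}(D_1\times\cS)\cap\cC(\bar{D}_1\times\cS)$, and satisfies $(\cA\varphi_2)_k = \lambda_{D_2}\varphi_{2,k}$ in $D_1$; hence the restriction $\varphi_2|_{\bar{D}_1}$ belongs to $\Uppsi_{D_1}^+(\lambda_{D_2})$, and \cref{E2.1} gives $\lambda_{D_1}\le\lambda_{D_2}$.

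For strictness I would argue by contradiction: suppose $\lambda_{D_1}=\lambda_{D_2}=:\lambda$, let $\varphi_1$ be the principal eigenfunction on $D_1$ given by \cref{T2.3}, and set $w:=\varphi_2|_{\bar{D}_1}$. A short connectedness argument shows that $\partial D_1\cap D_2\neq\varnothing$: otherwise $D_1$ would be both open and relatively closed in $D_2$, contradicting the connectedness of $D_2$ together with $\varnothing\neq D_1\subsetneq D_2$. Fix $x_0\in\partial D_1\cap D_2$; then $w(x_0)=\varphi_2(x_0)>0$ while $\varphi_1(x_0)=0$. Crucially, for the family $u_t:=t\varphi_1-w$ we still have $u_t\le 0$ on the whole of $\partial D_1$, because $\varphi_1=0$ on $\partial D_1$ and $w\ge 0$ on $\bar{D}_1\subset\bar{D}_2$.

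Now I would run the sliding argument exactly as in the final part of the proof of \cref{T2.3}. Using a maximizing measurable selector to linearize the concave operator $\cA$, one obtains
\begin{equation*}
\sup_{\xi\in\Act}\Bigl\{\trace(a_k\grad^2 u_{t,k}) + b_k(\cdot,\xi)\cdot\grad u_{t,k} + (c_k(\cdot,\xi)-\lambda)u_{t,k} + \sum_{j=1}^N m_{k,j}(\cdot,\xi)\,u_{t,j}\Bigr\}\,\ge\,0 \quad\text{in } D_1,\ k\in\cS.
\end{equation*}
For $t$ small, $u_t\ll 0$ on a compact $K\subset D_1$ with $|D_1\setminus K|$ arbitrarily small, and $u_t\le 0$ on $\partial D_1$; the ABP estimate of \cite[Theorem~1]{Sirakov} applied on $D_1\setminus K$ then yields $u_t\le 0$ throughout $D_1\times\cS$. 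Setting $\mathfrak{t}:=\sup\{t>0:u_t\ll 0\text{ in }D_1\times\cS\}\in(0,\infty)$, the strong maximum principle together with the irreducibility of $\Breve M$ forces either $u_{\mathfrak{t}}\equiv 0$ or $u_{\mathfrak{t}}\ll 0$; the second option would allow extending $\mathfrak{t}$, so $u_{\mathfrak{t}}\equiv 0$ and hence $w=\mathfrak{t}\,\varphi_1$ on $\bar{D}_1$. Evaluating at $x_0\in\partial D_1\cap D_2$ yields $0<w(x_0)=\mathfrak{t}\,\varphi_1(x_0)=0$, the desired contradiction.

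The main obstacle, and the reason strictness is nontrivial, is that $\bar{D}_1$ need not be contained in $D_2$, so $\varphi_2$ may vanish on part of $\partial D_1$ and the sliding argument cannot rely on positivity of $w$ on the entire boundary. This is precisely circumvented by the observation that $\partial D_1\cap D_2\neq\varnothing$ whenever $D_1\subsetneq D_2$, which both preserves the boundary inequality $u_t\le 0$ on $\partial D_1$ and supplies a single boundary point at which $\varphi_2$ is strictly positive, giving the final contradiction.
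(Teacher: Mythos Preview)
Your proof is correct and follows the same strategy as the paper: both restrict the eigenfunction $\varphi_{D_2}$ to $D_1$ to get $\lambda_{D_1}\le\lambda_{D_2}$, assume equality, invoke the sliding/uniqueness argument from the end of \cref{T2.3} to conclude $\varphi_{D_2}=t\varphi_{D_1}$ on $\bar{D}_1$, and then derive a contradiction from $\varphi_{D_1}=0$ on $\partial D_1$ while $\varphi_{D_2}\gg 0$ in $D_2$. Your explicit connectedness argument showing $\partial D_1\cap D_2\neq\varnothing$ is a useful clarification that the paper leaves implicit.
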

\begin{proof}
Let $\varphi_{D_{1}}, \varphi_{D_{2}}$ denote the principal eigenfunctions corresponding $\lambda_{D_1}, \lambda_{D_2}$ respectively. From the definition \cref{E2.1}, it follows that $\lambda_{D_1} \le \lambda_{D_2}$. If $\lambda_{D_1} = \lambda_{D_2}$, then 
\begin{equation*}
(\cA \varphi_{D_{2}})_{k}(x) \le  \lambda_{D_{2}} \varphi_{D_{2},k}(x) \,\,\, \text{in}\,\,\, D_1, \quad \forall \,\,\, k\in\cS.
\end{equation*} As in the proof of \cref{T2.3}, this implies $\varphi_{D_{2}} = t \varphi_{D_{1}}$ for some $t > 0$. This contradict the fact that $\varphi_{D_{2}} \gg 0$ in $D_{2}\times\cS$, since $D_{1}\subsetneq D_{2}$ and $\varphi_{D_{1}} = 0$ on $\partial{D}_{1}\times\cS$.  Thus, we obtain $\lambda_{D_1} < \lambda_{D_2}$. 
\end{proof}
Let $\{D_n\}_{n\in\NN}$ be a decreasing sequence of smooth domains whose intersection is $D$, and which satisfies an exterior sphere condition uniformly in $n\in\NN$,
that is, there exists $r>0$ such that for all large $n$,
every point of $\partial D_n$ can be touched from outside of $D_n$ with a ball of radius $r$. 
 
Next we address the continuity properties of the principal eigenvalue with respect to the domain $D$. In order to proof the continuity property we need the following boundary estimate. For the proof, see \cite[Lemma~6.1]{AB-19}.
\begin{lemma}\label{L2.1}
Suppose that $\norm{w}_{\infty;D\times\cS} \le 1$, and it satisfies
\begin{equation*}
\trace (a_{k}\grad^2 w_{k}) + \delta\abs{\grad w_{k}} \,\ge\, L \text{\ \ in\ } D\,, \quad w = 0
\text{\ \ on\ } \partial{D}\times\cS\,\quad \forall \,\,\, k\in\cS
\end{equation*}
where $D$ has an exterior sphere property of radius $r>0$.
Then for $s\in(0,1)$, there exist constants $M$, and $\varepsilon$,
depending only on
$\delta$, $L$, $r$, and $s$, such that
\begin{equation*}
\max_{k\in\cS}\abs{w_{k}(x)}\,\le\, M \dist(x,\partial D)^s, \quad \text{for all $x$ such that\ }
\dist(x, \partial D)<\varepsilon\,.
\end{equation*}
\end{lemma}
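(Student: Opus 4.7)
The plan is a componentwise boundary barrier argument of standard scalar type. Fix $k\in\cS$ and a point $x_0\in\partial D$; by the exterior sphere condition there is a $y_0\notin D$ with $|x_0-y_0|=r$ and $B_r(y_0)\cap D=\varnothing$. I will build a radial barrier $\phi$ centered at $y_0$ which dominates $w_k$ on a small neighborhood of $x_0$ and vanishes at $x_0$ with modulus $\dist(\cdot,\partial D)^{s}$, and then invoke a maximum principle for $w_k-\phi$.

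For the barrier I would try a power-of-power profile such as
\begin{equation*}
\phi(x) \,\df\, A\Bigl[1-\bigl(r/|x-y_0|\bigr)^{\beta}\Bigr]^{s},\qquad A,\beta>0,
\end{equation*}
so that $\phi(x)\sim A(\beta/r)^{s}(|x-y_0|-r)^{s}$ as $|x-y_0|\to r^{+}$. Writing $\rho=|x-y_0|$ and differentiating, the radial Hessian produces a leading term of order $s(s-1)\beta^{s}(\rho-r)^{s-2}$ coming from the concavity of $[\,\cdot\,]^{s}$, weighted by the ellipticity ratio $(x-y_0)^{\transp}a_k(x-y_0)/\rho^{2}\ge\Lambda^{-1}$; this leading term is strictly negative, whereas the complementary Hessian traces and the gradient piece $\delta\,|\grad\phi|$ are only of order $(\rho-r)^{s-1}$. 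Choosing $\beta$ large and then $\varepsilon$ small, with both depending only on $d,\Lambda,\delta,L,r,s$, allows one to arrange
\begin{equation*}
\trace\bigl(a_k\grad^{2}\phi\bigr)+\delta\,|\grad\phi|\,\le\, L\qquad\text{on the shell }\{r<|x-y_0|<r+\varepsilon\}.
\end{equation*}
Enlarging $A$, again in a way depending only on the previous parameters, one also secures $\phi\ge 1$ on the outer sphere $|x-y_0|=r+\varepsilon$.

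With the barrier in hand, set $\Omega\df D\cap B_{r+\varepsilon}(y_0)$ and $u\df w_k-\phi$ on $\Omega$. Using the triangle inequality $|\grad w_k|\le|\grad u|+|\grad\phi|$ together with the lemma's hypothesis and the barrier inequality just derived, a direct subtraction gives
\begin{equation*}
\trace\bigl(a_k\grad^{2}u\bigr)+\delta\,|\grad u|\,\ge\, 0\qquad\text{in }\Omega,
\end{equation*}
while on $\partial\Omega$ one has $u\le 0$: on $\partial\Omega\cap\partial D$ because $w_k=0$ and $\phi\ge 0$, and on the outer cap because $\phi\ge 1\ge w_k$. Rewriting $\delta\,|\grad u|$ as $b\cdot\grad u$ for a measurable selector $b$ of the direction of $\grad u$ (with $|b|\le\delta$) linearises the inequality, and the ABP estimate from \cite[Theorem~1]{Sirakov} then yields $u\le 0$ in $\Omega$, i.e.\ $w_k(x)\le\phi(x)$ there. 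For $x\in D$ with $\dist(x,\partial D)<\varepsilon$, taking $x_0$ to be a nearest boundary point gives $|x-y_0|-r\le|x-x_0|=\dist(x,\partial D)$, which converts the barrier bound into $w_k(x)\le M\,\dist(x,\partial D)^{s}$ for a constant $M$ depending only on $\delta,L,r,s$ and the ellipticity constant $\Lambda$, uniformly in $k$. The symmetric lower bound follows from the same construction applied to $-w_k$, invoking the analogous differential inequality.

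The main technical hurdle will be the barrier calibration in the construction above: one must show that the concavity-driven negative term $s(s-1)\beta^{s}(\rho-r)^{s-2}$ dominates both the non-principal Hessian traces and the first-order contribution $\delta\,|\grad\phi|$ throughout the whole shell and not only in the limit $\rho\to r^{+}$, all with constants independent of the specific boundary point $x_0$ and of the state index $k$. Once that calibration is achieved, the remaining steps---the triangle-inequality subtraction and the ABP application---are routine.
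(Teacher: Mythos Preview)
The paper does not supply a proof of this lemma; it simply cites \cite[Lemma~6.1]{AB-19}. Your barrier construction for the \emph{upper} bound $w_k(x)\le M\,\dist(x,\partial D)^s$ is the standard exterior-sphere argument and is essentially correct: the subtraction step works because $|\grad w_k|-|\grad\phi|\le|\grad u|$, the linearisation of $\delta|\grad u|$ by a bounded measurable drift is legitimate, and the ABP maximum principle then forces $u\le 0$ on the shell. The calibration you flag as the ``main technical hurdle'' is indeed routine once one tracks the powers of $(\rho-r)$.

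There is, however, a genuine gap in your last sentence. You claim the lower bound $w_k(x)\ge -M\,\dist(x,\partial D)^s$ ``follows from the same construction applied to $-w_k$, invoking the analogous differential inequality''. But the hypothesis is one--sided: from $\trace(a_k\grad^2 w_k)+\delta|\grad w_k|\ge L$ you get, for $v=-w_k$, only the \emph{upper} bound $\trace(a_k\grad^2 v)+\delta|\grad v|\le -L+2\delta|\grad w_k|$, which is useless for the barrier comparison. In fact the two--sided conclusion cannot follow from the stated one--sided hypothesis: on $D=(-1,1)$ with $a=1$, $\delta=0$, $L=0$, the function $w(x)=-(1-x^2)^{1/4}$ satisfies $w''>0$, $w=0$ on $\partial D$, $\|w\|_\infty\le 1$, yet $|w(x)|\sim\dist(x,\partial D)^{1/4}$ near the boundary, violating the conclusion for any $s>1/4$.

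For the paper's purposes this does not matter: in \cref{T2.6} the lemma is applied to the Dirichlet eigenfunctions $\varphi_{D_n}$, which are nonnegative, so the lower bound is automatic and only your (correct) upper--bound argument is needed. If you want to recover the full two--sided estimate in general, you must add the mirror hypothesis $\trace(a_k\grad^2 w_k)-\delta|\grad w_k|\le L'$ (equivalently, impose the inequality on $\pm w_k$), which is exactly what holds when $w$ solves an equation rather than a one--sided inequality.
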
 
 
\begin{theorem}\label{T2.6}
Suppose that $D_n\to D$ as above. Then $\lambda_{D_n}\to \lambda_D$, as $n\to\infty$.
\end{theorem}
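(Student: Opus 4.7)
My plan has three stages: establishing monotonicity of $\lambda_{D_n}$, extracting a limit eigenfunction on $D$ via interior and boundary estimates, and identifying this limit with $\lambda_D$ by a strong-maximum-principle comparison in the spirit of \cref{T2.3}.

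Monotonicity is immediate: by \cref{T2.5}, since $D \subsetneq D_n$ and $D_{n+1}\subset D_n$ for every $n$, the sequence $\{\lambda_{D_n}\}$ is strictly decreasing and bounded below by $\lambda_D$, so $\lambda^\star := \lim_n \lambda_{D_n}$ exists with $\lambda^\star \ge \lambda_D$. It suffices to show $\lambda^\star \le \lambda_D$.

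For the extraction, let $\varphi_n$ be the principal Dirichlet eigenfunction on $D_n$ from \cref{T2.3}, normalized by $\max_{k\in\cS}\|\varphi_{n,k}\|_{\infty;D_n} = 1$. Since the coefficients $a, b, c, m_{ij}$ and the eigenvalues $\lambda_{D_n}\in[\lambda_D,\lambda_{D_1}]$ are uniformly bounded, each component $\varphi_{n,k}$ satisfies a scalar elliptic inequality of the type required by \cref{L2.1}. The uniform exterior sphere condition on $\{D_n\}$ then furnishes constants $M,\varepsilon,s\in(0,1)$, independent of $n$, such that
\begin{equation*}
\max_{k\in\cS}\varphi_{n,k}(x) \,\le\, M\,\dist(x,\partial D_n)^s\quad \text{whenever}\ \dist(x,\partial D_n)<\varepsilon.
\end{equation*}
Standard interior $\Sob^{2,p}$ estimates (with $p>d$) together with the Sobolev embedding yield, after passing to a subsequence, convergence $\varphi_n\to\varphi$ in $\cC^{1,\alpha}_{\mathrm{loc}}(D\times\cS)$ to some $\varphi\in\Sobl^{2,p}(D\times\cS)$ satisfying $(\cA\varphi)_k = \lambda^\star\varphi_k$ in $D$. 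The Hausdorff convergence $D_n\downarrow D$ transports the boundary decay to $\varphi$, so $\varphi$ extends continuously to $\bar D$ with $\varphi = 0$ on $\partial D$. The normalization plus the boundary estimate forces the maximizers $x_n$ of $\varphi_n$ to satisfy $\dist(x_n,\partial D_n)\ge(1/M)^{1/s}$; they hence stay in a compact subset of $D$, and any limit point $x^\star\in D$ gives $\varphi(x^\star) = 1$. The cooperative strong-maximum argument from the proof of \cref{T2.3} then yields $\varphi\gg 0$ in $D\times\cS$.

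Finally, suppose for contradiction that $\lambda^\star > \lambda_D$, and let $\varphi_D$ denote the principal eigenfunction on $D$ from \cref{T2.3}. Set $t^\star := \inf_{(x,k)\in D\times\cS}\varphi_{D,k}(x)/\varphi_k(x)$. By Hopf's boundary lemma (both eigenfunctions have strictly positive inward normal derivatives on $\partial D$), the ratio extends continuously and positively to $\bar D\times\cS$, so $t^\star\in(0,\infty)$ and is attained at some $(x_0,k_0)\in\bar D\times\cS$. Define $u := \varphi_D - t^\star\varphi \ge 0$, with $u_{k_0}(x_0)=0$. Choosing a measurable minimizing selector $v$ for $\cA\varphi_D$, one has $\cA^v\varphi_D = \lambda_D\varphi_D$ and $\cA^v\varphi \ge \cA\varphi = \lambda^\star\varphi$, whence
\begin{equation*}
(\cA^v u)_k \,\le\, \lambda_D u_k - t^\star(\lambda^\star - \lambda_D)\varphi_k\,<\,\lambda_D u_k \quad\text{in }D,\ k\in\cS.
\end{equation*}
An interior contact point $x_0\in D$ is excluded by the minimum-principle inequalities $\grad u_{k_0}(x_0)=0$, $\grad^2 u_{k_0}(x_0)\ge 0$, $u_{k_0}(x_0)=0$, and $u_j(x_0)\ge 0$ (using $m_{k_0,j}\ge 0$ for $j\ne k_0$), which force $(\cA^v u)_{k_0}(x_0)\ge 0$ and contradict the strict upper bound. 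A boundary contact point $x_0\in\partial D$ is excluded by Hopf's lemma applied to $u$, which would give $\partial_\nu u_{k_0}(x_0) > 0$ in contradiction to the minimality of $t^\star$. Therefore $\lambda^\star = \lambda_D$. The most delicate step is this concluding comparison, which requires deploying the cooperative strong maximum principle and Hopf's lemma at both interior and boundary contact points; a secondary subtlety is ensuring (via \cref{L2.1}) that the maximizers of the $\varphi_n$ stay uniformly bounded away from $\partial D_n$, so that the limit $\varphi$ is nontrivial.
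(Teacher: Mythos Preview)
Your proof follows the same three-stage route as the paper: monotonicity via \cref{T2.5}, extraction of a limit eigenfunction on $D$ via \cref{L2.1} plus interior $\Sob^{2,p}$ estimates, and identification of the limit eigenvalue with $\lambda_D$ by a comparison/uniqueness argument. The paper's version of the last step simply invokes the uniqueness established in the proof of \cref{T2.3} (the sliding argument using the ABP maximum principle on a thin annulus $D\setminus K$), whereas you write out a direct touching argument based on the ratio $\varphi_D/\varphi$ and Hopf's lemma.

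Your variant is in the same spirit but introduces two technical points that deserve more care. First, at an interior contact point you use the pointwise relations $\grad u_{k_0}(x_0)=0$ and $\grad^2 u_{k_0}(x_0)\ge 0$; since $u$ is only in $\Sobl^{2,p}$, this classical pointwise reasoning is not justified and should be replaced by the strong maximum principle for strong solutions (e.g.\ \cite[Theorem~9.6]{GilTru}), exactly as in \cref{T2.3}. Second, to exclude a boundary contact point you need $\varphi\in\cC^1(\bar D\times\cS)$ with strictly positive inward normal derivative, but the extraction only gives $\varphi\in\cC(\bar D\times\cS)\cap\Sobl^{2,p}(D\times\cS)$; you would need an additional global $\Sob^{2,p}(D)$ estimate (linearize via a measurable selector and apply global Calder\'on--Zygmund on the smooth domain $D$) before invoking Hopf. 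The paper's route through the ABP argument of \cref{T2.3} avoids both of these boundary/regularity issues entirely.
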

\begin{proof}
From \cref{T2.5} it is clear that $\lambda_{D_n}$ is a decreasing sequence and bounded below by $\lambda_D$. Thus, $\lambda_{D_n}$ converges to a number $\Tilde\lambda$, with $\Tilde\lambda \ge \lambda_D$.
We normalize the eigenfunctions so that $\norm{\varphi_{D_n}}_{\infty; D_{n}\times\cS} = 1$. Now, using \cref{L2.1} and the interior estimate, it can be easily seen that the family
$\{\varphi_{D_n}\}$ is equicontinuous and each limit point $ \phi\in \cC(\bar{D}\times\cS)\cap\Sobl^{2,p}(D\times\cS)$ is a nonnegative solution to
\begin{equation*}
(\cA \phi)_{k}(x) =  \Tilde{\lambda} \phi_{k}(x) \,\,\, \text{in}\,\,\, D, \quad \forall \,\,\, k\in\cS.
\end{equation*}
By the strong maximum principle, we must have $\phi \gg 0$ in $D\times\cS$. Then it follows from the
proof of \cref{T2.3} that $\Tilde\lambda=\lambda_D$.
\end{proof}
%
%%%%%%%%%%%%%%%%%%%%%%%%%%%%%%%%%%%%%%%%%%%%%%%%%%%%%%%%%%%%%%%%%%%%%%%%%%%%%%%%%
\section{Ergodic Risk-sensitive controls}\label{S-risk}
In this section, we study the risk-sensitive control problem, and characterize
the corresponding eigenfunction $V$.
Assumptions \hyperlink{A1}{{(A1)}}--\hyperlink{A4}{{(A4)}}
are in full effect in this section.
In view of \cref{T2.3,T2.5}, we have the following result on the existence of Dirichlet principal eigenpair on smooth bounded domains $\sB_n$, $n\in\NN$.
\begin{lemma}\label{L3.1}
For each $n\in\NN$ there exist $(\psi_n,\lambda_n)\in\cC_0(\bar{\sB}_n\times\cS)\cap\Sob^{2,p}(\sB_n\times\cS)\times\RR$, $p>d$, the Dirichlet principal eigenpair satisfying
\begin{equation}
\begin{split}\label{EL3.1A}
(\cA\psi_n)_{k}  & = \lambda_n\, \psi_{n,k} \quad \text{in\ } \sB_n\,,\\
\psi_{n,k} & = 0 \quad \text{in\ } \partial{\sB}_n\,,\\
\psi_{n,k} & > 0 \quad \text{in\ } \sB_n,  \quad \forall \,\,\, k\in\cS.
\end{split}
\end{equation}
 Moreover, we have the following
\begin{itemize}
\item[(a)] $\lambda_{n} < \lambda_{n+1}$, for all $n\in\NN$.
\item[(b)]For every minimizing selector $v_{n}^{*}$ of \cref{EL3.1A} and $r\in (0, n)$, we have
\begin{equation}\label{EL3.1B}
\psi_{n,k}(x) \,=\, \Exp_{x,k}^{v_{n}^{*}}\left[ e^{\int_0^{\uuptau_r} (c(X_s,S_s,v_{n}^{*}(X_s,S_s))-\lambda_n) \D{s}}\, \psi_n(X_{\uuptau_r}, S_{\uuptau_r})\Ind_{\{\uuptau_r<\uptau_n\}}\right]
\quad \forall\,x\in \sB_n\setminus \overline{\sB_r}\,,
\end{equation}
where $\uuptau_r=\uptau(B^c_r)$ denotes the hitting time of the process $X_{t}$ to the ball $\sB_r$. 
\item[(c)] For all $n\in\NN$, we have $\lambda_n\le \sE_{x,k}(c,Z)$ for $Z\in\Uadm,$ \,\, $x\in\sB_n$,\,\, $k\in\cS$\,.
\end{itemize}
\end{lemma}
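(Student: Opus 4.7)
The plan is to dispatch the three items in turn, leveraging the Dirichlet principal-eigenvalue theory developed in \cref{S-bounded} together with standard It\^o--Krylov stochastic-representation arguments. Existence of a principal eigenpair $(\psi_n,\lambda_n)$ satisfying \eqref{EL3.1A} is an immediate application of \cref{T2.3} to $D = \sB_n$: by \hyperlink{A1}{(A1)}--\hyperlink{A4}{(A4)} the coefficients satisfy the standing hypothesis of \cref{S-bounded} on every bounded ball (ellipticity with constant depending on $n$, boundedness and continuity of $b$, $c$, $m_{ij}$ on compacts, and irreducibility of $\Breve M$). Claim (a) then follows directly from the strict domain-monotonicity in \cref{T2.5} applied to $\sB_n \subsetneq \sB_{n+1}$.

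For part (b), I would fix a Borel minimizing selector $v_n^*$ in \eqref{EL3.1A} (provided by a standard measurable selection result, cf.\ \cite[Theorem~18.17]{AliBor}), so that $\psi_n$ satisfies the linear cooperative system $(\cA^{v_n^*}\psi_n)_k = \lambda_n \psi_{n,k}$ in $\sB_n$. The idea is to apply the It\^o--Krylov formula, under $\Prob_{x,k}^{v_n^*}$, to the exponentially weighted process
\[
M_t \,\df\, \psi_n(X_t,S_t)\exp\Bigl(\int_0^t \bigl(c(X_s,S_s,v_n^*(X_s,S_s)) - \lambda_n\bigr)\,\D s\Bigr).
\]
The eigenequation kills the finite-variation part, making $\{M_{t\wedge\uptau_n}\}$ a local martingale; boundedness of $\psi_n$, $b$, $a$, $c$, and $m_{ij}$ on $\bar{\sB}_n$, together with the standard exponential-moment bounds for $\uptau_n$ of nondegenerate diffusions on bounded domains, upgrades it to a uniformly integrable martingale on $[0,\uuptau_r \wedge \uptau_n]$. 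For $x\in\sB_n\setminus\bar{\sB}_r$ the stopping time $\uuptau_r\wedge\uptau_n$ is $\Prob$-a.s.\ finite by \hyperlink{A3}{(A3)}, and the optional stopping theorem combined with $\psi_n|_{\partial\sB_n}=0$ (which annihilates the contribution of $\{\uptau_n \le \uuptau_r\}$) yields \eqref{EL3.1B}.

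For part (c), fix $Z\in\Uadm$ and $x\in\sB_n$. Since $\cA$ is the pointwise infimum over $\xi\in\Act$, for every realized control value one has $(\cA^{Z_s}\psi_n)_k \ge (\cA\psi_n)_k = \lambda_n \psi_{n,k}$, so the same exponentially weighted process is now a \emph{sub}martingale up to $\uptau_n$ under $\Prob_{x,k}^Z$. Stopping at $t\wedge\uptau_n$, exploiting $\psi_n|_{\partial\sB_n}=0$, and bounding $\psi_n(X_t,S_t)\le\norm{\psi_n}_\infty$ on $\{\uptau_n > t\}$ gives
\[
\psi_{n,k}(x) \,\le\, \norm{\psi_n}_\infty\,\E^{-\lambda_n t}\,\Exp_{x,k}^Z\!\Bigl[\E^{\int_0^t c(X_s,S_s,Z_s)\,\D s}\Bigr].
\]
Taking logarithms, dividing by $t$, and sending $t\to\infty$ (using that $\psi_{n,k}(x) > 0$ strictly inside $\sB_n$, so that $\tfrac1t \log \psi_{n,k}(x) \to 0$) produces $\lambda_n \le \sE_{x,k}(c,Z)$.

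The main technical obstacle I anticipate is the clean justification of the It\^o--Krylov formula and the upgrade from local to genuine (sub)martingales for $\Sob^{2,p}$-regular functions of the regime-switching process \eqref{E1.1}; once the boundedness of all coefficients on $\bar{\sB}_n$ and the standard moment estimates for $\uptau_n$ are invoked, this reduces to routine mollification and dominated convergence. The cooperative/irreducible structure enters only implicitly, through the existence and strict positivity of $\psi_n$ inside $\sB_n$ supplied by \cref{T2.3}.
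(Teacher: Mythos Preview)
Your treatment of existence, (a), and (c) matches the paper exactly and is correct.

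For (b), however, there is a genuine gap. You assert that ``standard exponential-moment bounds for $\uptau_n$'' upgrade the local martingale $M_{t\wedge\uptau_n}$ to a uniformly integrable one on $[0,\uuptau_r\wedge\uptau_n]$. This would require a bound of the form $\Exp_{x,k}^{v_n^*}[e^{K(\uuptau_r\wedge\uptau_n)}]<\infty$ with $K=\sup_{\sB_n}(c-\lambda_n)^+$, since $M_{t\wedge(\uuptau_r\wedge\uptau_n)}\le\norm{\psi_n}_\infty e^{K(\uuptau_r\wedge\uptau_n)}$. But the exponential moment of the exit time from a bounded domain is only finite for exponents strictly below the bottom of the Dirichlet spectrum of $\Lg$ on that domain, and there is no reason for this spectral gap to dominate $K$; for large $n$ the gap shrinks like $n^{-2}$ while $K$ can be of order $\sup_{\sB_n}c$. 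Your argument therefore does not close the $\le$ direction of \eqref{EL3.1B}; Fatou only gives $\ge$.

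The paper handles exactly this point by a perturbation trick that exploits the eigenvalue structure rather than crude moment bounds. One introduces $\tilde c = c(\cdot,v_n^*)-\tilde\delta\,\Ind_{\sB_r}$ and the associated \emph{linear} Dirichlet eigenpair $(\tilde\psi_{n'},\tilde\lambda_{n'})$ on a slightly larger ball $\sB_{n'}\supset\sB_n$. Strict monotonicity in the potential and continuity in the domain force $\tilde\lambda_{n'}<\lambda_n$. On the event $\{T<\uuptau_r\wedge\uptau_n\}$ the process has not entered $\sB_r$, so $c=\tilde c$ along the path, and comparing $\psi_n$ to $\tilde\psi_{n'}$ (which is strictly positive on $\bar\sB_n$) gives
\[
\Exp_{x,k}^{v_n^*}\Bigl[e^{\int_0^T(c-\lambda_n)\,\D s}\,\psi_n(X_T,S_T)\,\Ind_{\{T<\uuptau_r\wedge\uptau_n\}}\Bigr]
\;\le\; \text{const}\cdot e^{(\tilde\lambda_{n'}-\lambda_n)T}\,\tilde\psi_{n',k}(x)\xrightarrow[T\to\infty]{}0.
\]
This is precisely the missing ingredient: the decay comes from the strict drop in the principal eigenvalue under a compactly supported perturbation of the potential, not from a universal moment estimate on the exit time.
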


\begin{proof}
Existence of principal eigenpair follows from \cref{T2.3}. Part (a) follows from \cref{T2.5}.
For part (b), let $v_{n}^{*}$ be a minimizing selector of \cref{EL3.1A}, we extend it in $\Rd$ by setting $v_{n}^{*} = \xi$ for some fixed $\xi\in\Act$. Then applying It\^{o}-Krylov formula and using the fact $\psi_n=0$ on $\partial{\sB}_n$, we obtain
\begin{equation}\label{EL3.1C}
\psi_{n,k}(x) \,=\, \Exp_{x,k}^{v_{n}^{*}}\left[ e^{\int_0^{T\wedge \uuptau_r} (c(X_s,S_s,v_{n}^{*}(X_s,S_s))-\lambda_n) \D{s}}\, \psi_n(X_{T\wedge \uuptau_r}, S_{T\wedge \uuptau_r})\Ind_{\{\uuptau_r\wedge T <\uptau_n\}}\right],
\quad T\ge 0\,.
\end{equation}
Letting $T\to\infty$ and applying Fatou's lemma it follows from \cref{EL3.1C} that 
\begin{equation}\label{EL3.1D}
\psi_{n,k}(x) \,\geq\, \Exp_{x,k}^{v_{n}^{*}}\left[ e^{\int_0^{\uuptau_r} (c(X_s,S_s,v_{n}^{*}(X_s,S_s))-\lambda_n) \D{s}}\, \psi_n(X_{\uuptau_r}, S_{\uuptau_r})\Ind_{\{\uuptau_r <\uptau_n\}}\right].
\end{equation}
Let  $\Tilde{c}(x) \,\df\,   c(x, v^*_n) - \tilde{\delta}\Ind_{\sB_r}(x)$, for some $\tilde{\delta} > 0$, and $(\Tilde\psi_n, \Tilde\lambda_n)$ be the corresponding principal Dirichlet
eigenpair on $\sB_{n}$ of the linear operator $\Tilde\cA^{v^*_n}$ (see \eqref{EcM}) defined as
\begin{align}\label{EL3.1Linopa}
(\Tilde{\cA}^{v_{n}^{*}} f)_{k}(x) \,\df\, \trace(a_{k}(x)\grad^2 f_{k}(x)) + \{b_{k}(x,v_{n}^{*}(x,k))\cdot\grad f_{k}(x) & + \Tilde{c}_{k}(x,v_{n}^{*}(x,k))f_{k}(x)\nonumber\\
& + \sum_{j=1}^{N} m_{k,j}(x,v_{n}^{*}(x,j))f_{j}(x)\}.
\end{align}
Existence of such eigenpair follows from \cite[Theorem~A.1]{ABP-21}.
Then by strict monotonicity of the Dirichlet principal eigenvalue \cite[Theorem~A.2]{ABP-21}, we have $\lambda_n > \Tilde\lambda_n$ for all $n\in\NN$. By continuity of the Dirichlet principal eigenvalues with respect to the domains \cite[Theorem~A.4]{ABP-21}, we can find a ball
$\sB_{n'}$ containing $\sB_n$ such that $\lambda_n>\Tilde{\lambda}_{n'}$, where
$(\Tilde{\psi}_{n'}, \Tilde{\lambda}_{n'})$ is the principal Dirichlet eigenpair corresponding to $\Tilde{c}$ in $\sB_{n'}$.
Rewriting \cref{EL3.1C}, we obtain
\begin{align}\label{EL3.1E}
\psi_{n,k}(x) \,= &\, \Exp_{x,k}^{v_{n}^{*}}\left[ e^{\int_0^{\uptau_n\wedge \uuptau_r} (c(X_s,S_s,v_{n}^{*}(X_s,S_s))-\lambda_n) \D{s}}\, \psi_n(X_{\uptau_n\wedge \uuptau_r}, S_{\uptau_n\wedge \uuptau_r})\Ind_{\{\uuptau_r\wedge \uptau_n \le T\}}\right] \nonumber\\
& \, + \, \Exp_{x,k}^{v_{n}^{*}}\left[ e^{\int_0^{T} (c(X_s,S_s,v_{n}^{*}(X_s,S_s))-\lambda_n) \D{s}}\, \psi_n(X_{T}, S_{T})\Ind_{\{T < \uuptau_r\wedge \uptau_n\}}\right]
\end{align}
Also, we have
\begin{align*}
\Exp_{x,k}^{v_{n}^{*}}&\left[ e^{\int_0^{T} (c(X_s,S_s,v_{n}^{*}(X_s,S_s))-\lambda_n) \D{s}}\,  \psi_n(X_{T}, S_{T})\Ind_{\{T < \uuptau_r\wedge \uptau_n\}}\right] 
\\
&\,\le\, \Exp_{x,k}^{v_{n}^{*}}\left[ e^{\int_0^{T} (\Tilde{c}(X_s,S_s,v_{n}^{*}(X_s,S_s))-\lambda_n) \D{s}}\, \psi_n(X_{T}, S_{T})\Ind_{\{T < \uuptau_r\wedge \uptau_n\}}\right]
\\
&\,\le\,  \frac{\max_{k\in\cS}\,\sup_{\sB_n}\psi_{n,k}}{\min_{k\in\cS}\,\inf_{\sB_n}\psi_{n',k}}\, \Exp_{x,k}^{v_{n}^{*}}\left[ e^{\int_0^{T} (\Tilde{c}(X_s,S_s,v_{n}^{*}(X_s,S_s))-\lambda_n) \D{s}}\, \psi_{n'}(X_{T}, S_{T})\Ind_{\{T < \uuptau_r\wedge \uptau_n\}}\right]
\\
&\,\le\,  \frac{\max_{k\in\cS}\,\sup_{\sB_n}\psi_{n,k}}{\min_{k\in\cS}\,\inf_{\sB_n}\psi_{n',k}}\,
e^{(\Tilde{\lambda}_{n'}-\lambda_n)T}\Tilde{\psi}_{n',k}(x) \,\xrightarrow[T\to\infty]{}\, 0\,.
\end{align*}
Thus, using the monotone convergence theorem, letting $T\to\infty$ in \cref{EL3.1E}, it follows that
\begin{equation}\label{EL3.1F}
\psi_{n,k}(x) \,\leq\, \Exp_{x,k}^{v_{n}^{*}}\left[ e^{\int_0^{\uuptau_r} (c(X_s,S_s,v_{n}^{*}(X_s,S_s))-\lambda_n) \D{s}}\, \psi_n(X_{\uuptau_r}, S_{\uuptau_r})\Ind_{\{\uuptau_r <\uptau_n\}}\right].
\end{equation}
Now, combining \cref{EL3.1D} and \cref{EL3.1F}, we obtain \cref{EL3.1B}.

For part (c), applying It\^{o}-Krylov formula as in \cref{EL3.1C}, we have
\begin{eqnarray*}
\psi_{n,k}(x) \,& \le &\, \Exp_{x,k}^{Z}\left[ e^{\int_0^{T} (c(X_s,S_s,Z_s))-\lambda_n) \D{s}}\, \psi_n(X_{T}, S_{T})\Ind_{\{ T <\uptau_n\}}\right]\\
&\le & \, \norm{\psi_n}_{L^\infty(\sB_{n}\times\cS)} \Exp_{x,k}^{Z}\left[ e^{\int_0^{T} (c(X_s,S_s,Z_s))-\lambda_n) \D{s}}\right] \quad \forall\,\, T\ge 0, \,\, (x,k)\in\sB\times\cS.
\end{eqnarray*}
Taking logarithm both side, dividing by $T$ and letting $T\to\infty$, we obtain $\lambda_n\le \sE_{x,k}(c,Z)$ for $Z\in\Uadm$,\,\,$x\in\sB_n$,\,\, $k\in\cS$\,.
\end{proof}
We now show that under  \cref{A1.1} or \cref{A1.2}, the optimal value of our optimal problem is finite, that is, $\sE^{*}(c) < \infty$\,. 

%%%%%%%%%%%%%%%%%%%%%%%%%%%%%%%%%%%%%%%%%%%%%%%%%%%%%%%%%%%%%%%%%%%%%%%%%%%%%%%%
\begin{lemma}\label{L3.2}
Suppose that \cref{A1.1} or \cref{A1.2} holds.
Then $\sE^{*}(c) < \infty$. 
\end{lemma}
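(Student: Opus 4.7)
The plan is to exhibit a single stationary Markov control $v\in\Usm$ and one initial condition $(x,k)\in\Rd\times\cS$ for which $\sE_{x,k}(c,v)<\infty$; by the definition in \cref{Eoptval} this immediately gives $\sE^*(c)\le \sE_{x,k}(c,v)<\infty$. Fix any Borel measurable selector $v\colon\Rd\times\cS\to\Act$ (for instance, a constant $\xi_0\in\Act$).

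The first task is to derive a Foster--Lyapunov inequality for the linearized operator $\cA^v$ of \cref{EcM}: there exist constants $C_1,\hat{M}>0$ (depending on $v$) such that
\begin{equation*}
(\cA^v \Lyap)_k(x) + \hat{M}\,\Lyap_k(x) \,\le\, C_1,\qquad x\in\Rd,\ k\in\cS.
\end{equation*}
Under \cref{A1.2} this is immediate with $\hat{M}=\gamma-\norm{c}_\infty>0$ and $C_1=\beta$, since $(\cA^v\Lyap)_k=(\Lg^v\Lyap)_k+c_k(x,v)\Lyap_k\le \beta\Ind_\cK+(\norm{c}_\infty-\gamma)\Lyap_k$. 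Under \cref{A1.1} I would fix $\hat{M}>0$ arbitrary, invoke the inf-compactness of $\ell-\sup_\xi c$ to pick $R$ so large that $\ell_k(x)-\sup_\xi c_k(x,\xi)\ge \hat{M}$ for all $\abs{x}\ge R$ and $k\in\cS$, with $\cK\subset\sB_R$, and then split $\Rd=\sB_R\cup\sB_R^c$: on $\sB_R^c$ the Lyapunov bound combined with $c_k-\ell_k\le -\hat{M}$ gives $(\cA^v\Lyap)_k+\hat{M}\Lyap_k\le 0$ directly, while on $\sB_R$ the continuity (hence local boundedness) of $\Lyap, c, \ell$ yields an absolute bound $C_1=C_1(\hat{M},R)$.

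The second task is an It\^o--Dynkin computation for the function $f(t,x,k)=e^{\int_0^t c(X_s,S_s,v(X_s,S_s))\D s}\,\Lyap_k(x)$ applied on $[0,T\wedge\uptau_n]$. Affine growth \hyperlink{A2}{{(A2)}} guarantees non-explosion of $(X,S)$, so $\uptau_n\uparrow\infty$ a.s., and monotone convergence (all terms are non-negative) yields, writing $c_s:=c(X_s,S_s,v(X_s,S_s))$,
\begin{equation*}
\Exp_{x,k}^v\!\bigl[e^{\int_0^T c_s\D s}\Lyap(X_T,S_T)\bigr] + \hat{M}\,\Exp_{x,k}^v\!\Bigl[\int_0^T e^{\int_0^r c_s\D s}\Lyap(X_r,S_r)\D r\Bigr] \,\le\, \Lyap_k(x) + C_1\,\Exp_{x,k}^v\!\Bigl[\int_0^T e^{\int_0^r c_s\D s}\D r\Bigr].
\end{equation*}
Since $\Lyap\ge 1$, setting $g(T)\df \Exp_{x,k}^v[e^{\int_0^T c_s\D s}]$ this simplifies to $g(T)\le \Lyap_k(x)+(C_1-\hat{M})\int_0^T g(r)\D r$, and Gr\"onwall's inequality gives $g(T)\le \Lyap_k(x)\exp((C_1-\hat{M})_+ T)$. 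Therefore
\begin{equation*}
\sE_{x,k}(c,v)\,=\,\limsup_{T\to\infty}\frac{1}{T}\log g(T)\,\le\,(C_1-\hat{M})_+\,<\,\infty,
\end{equation*}
which yields $\sE^*(c)<\infty$ under either assumption.

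The main point to get right is the Foster--Lyapunov step under \cref{A1.1}, since $\ell,\,\sup_\xi c,\,\Lyap$ are all \emph{a priori} unbounded on $\Rd$; the inf-compactness of $\ell-\sup_\xi c$ is precisely the input that makes the $-\hat{M}\Lyap$ term dominate at spatial infinity. The localization by $\uptau_n$ followed by monotone convergence in the It\^o step is routine once non-explosion is in hand.
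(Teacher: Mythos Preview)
Your argument is correct and self-contained, but it takes a somewhat different route than the paper's.

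The paper dispatches \cref{A1.2} in one line: since $c$ is bounded, $\sE^*(c)\le\norm{c}_\infty<\infty$. Under \cref{A1.1} it absorbs the constant $\beta$ into the potential by rewriting the Lyapunov condition as $(\Lg\Lyap)_k+(\ell_k-g_k)\Lyap_k\le 0$ with the bounded function $g_k=\beta[\inf_\cK\Lyap_k]^{-1}\Ind_\cK$; this yields directly the \emph{supermartingale} bound $\Lyap_k(x)\ge\Exp^Z_{x,k}\bigl[e^{\int_0^T(\ell-g)\,\D s}\Lyap(X_T,S_T)\bigr]$, whence $\sE^*(\ell)\le\norm{g}_\infty$, and the conclusion follows from $\sup_\xi c\le \hat\beta+\ell$. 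Your unified approach---manufacture an inhomogeneous Foster--Lyapunov bound $(\cA^v\Lyap)_k+\hat{M}\Lyap_k\le C_1$ and close with Gr\"onwall---is equally valid and has the virtue of treating both assumptions symmetrically; the price is the extra Gr\"onwall step, which the paper's trick of dividing $\beta\Ind_\cK$ by $\inf_\cK\Lyap$ (to make the inequality homogeneous) avoids. One minor technical remark: the passage $n\to\infty$ for the term $\Exp_{x,k}^v\bigl[e^{\int_0^{T\wedge\uptau_n}c_s\D s}\Lyap(X_{T\wedge\uptau_n},S_{T\wedge\uptau_n})\bigr]$ is not literally monotone in $n$; the clean fix is either to invoke Fatou there, or to run Gr\"onwall already at the localized level for $g_n(T):=\Exp_{x,k}^v\bigl[e^{\int_0^{T\wedge\uptau_n}c_s\D s}\bigr]$ and then let $n\to\infty$ by monotone convergence (using $c\ge 0$).
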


\begin{proof}
Since under \cref{A1.2}, $\norm{c}_{L^\infty(\Rd\times\cS)}$ is finite, therefore $\sE^{*}(c) \le \norm{c}_{L^\infty(\Rd\times\cS)} < \infty$.

Suppose  \cref{A1.1} holds. Then, we see that
\begin{equation*}
(\Lg\Lyap)_{k} + (\ell_{k} - g_{k})\Lyap_{k} \,\le\, 0 \quad \forall \,\, k\in\cS, (x,\xi)\in \RR^d\times\Act\,, 
\end{equation*} where $g_{k} = \beta [\inf_\cK \Lyap_{k}]^{-1}\Ind_\cK$.
Applying It\^{o}-Krylov formula, for any $Z\in\Uadm$ we obtain
\begin{equation*}
\Lyap_{k}(x) \,\ge\,\Exp_{x,k}^{Z}\left[e^{\int_0^{\uptau_n\wedge T} (\ell(X_s,S_s)-g(X_s,S_s)) \,\D{s}}\,
\Lyap(X_{\uptau_n\wedge T},S_{\uptau_n\wedge T})\right]\,.
\end{equation*}
Letting $n\to\infty$ and applying Fatou's lemma, it follows that
\begin{equation*}
\Lyap_{k}(x) \,\ge\, \Exp_{x,k}^{Z}\left[e^{\int_0^{T} (\ell(X_s,S_s)-g(X_s,S_s)) \,\D{s}}\,
\Lyap(X_{ T},S_{T})\right]
\,\ge\, \bigl(\inf_{\Rd} \,\min_{k\in\cS}\,\Lyap_{k}\bigr)\,
\Exp_{x,k}^{Z}\left[e^{\int_0^{T} (\ell(X_s,S_s)-g(X_s,S_s)) \,\D{s}}\,\right].
\end{equation*}
Taking logarithm on both sides, dividing by $T$,
 and letting $T\to\infty$, we deduce that 
 $$\sE^{*}(\ell)\le \max_{k\in\cS}\beta\, [\inf_\cK \Lyap_{k}]^{-1}.$$
It is also clear from the \cref{A1.1} that $\sup_{\xi\in\Act} c_{k}(x,\xi) \leq \hat{\beta} + l_{k}(x)$, for some constant $\hat{\beta} > 0$, for all $k\in\cS$ and $x\in\Rd$. Thus, we obtain 
$$\sE^{*}(c) < \hat{\beta} + \max_{k\in\cS}\beta\, [\inf_\cK \Lyap_{k}]^{-1}.$$
\end{proof}
Next, we define the generalized principal eigenvalue $\lambda^{*} $ in $\Rd$ of $\cA$. For $\lambda\in\RR$, let us first define
\begin{equation*}
\Uppsi^+(\lambda) \,\df\, \bigl\{\tilde{\psi}\in \Sobl^{2, d}(\Rd\times\cS)\,\colon\, \tilde{\psi}\gg 0 \text{\ in\ } \Rd\times\cS\,,\ (\cA\tilde{\psi})_k(x) \,\le \,\lambda\tilde{\psi}_{k}(x) \text{\ in\ }  \Rd, \quad \forall \,  k\in\cS \bigr\},
\end{equation*}
and the generalized principal eigenvalue is defined as
\begin{equation}\label{E3.8}
\lambda^{*} = \lambda^{*}(c) \,\df\, \inf\,\bigl\{\lambda\in\RR\,\colon\,
\Uppsi^+(\lambda)\ne \varnothing\bigr\}\,.
\end{equation}

In the following lemma we show that for the semi-linear operator $\cA$, there exist positive eigenfunctions in the whole space $\Rd$\,. In particular, we show that as $n\to\infty$ the Dirichlet principal eigenpairs $(\psi_n,\lambda_n)$  on bounded domains $\sB_n$, converge to the principal eigenpair of the semi-linear operator $\cA$ in $\Rd$\,.
%%%%%%%%%%%%%%%%%%%%%%%%%%%%%%%%%%%%%%%%%%%%%%%%%%%%%%%%%%%%%%%%%%%%%%%%%%%%%%%%
\begin{lemma}\label{L3.3} Suppose that \cref{A1.1} or \cref{A1.2} holds. Let $\Tilde{\lambda} = \lim_{n\to\infty}\lambda_n$. Then we have the following:
\begin{itemize}
\item[(a)]
There exists a function
$\psi^*\in \Sobl^{2,p}(\Rd\times\cS)\cap\order(\Lyap)$, $\psi^* \gg 0,$ satisfying
\begin{equation}\label{EL3.3A}
(\cA\psi^*)_{k}(x) \,=\, \Tilde{\lambda} \psi_{k}^*(x)\quad \text{in\ } \Rd\quad \forall \,\, k\in\cS.
\end{equation}

\item[(b)] It holds that $\Tilde{\lambda} = \lamstr$.

%\item[(c)]
%There exists $r_{0} > 0$, such that for any minimizing selector $v^*$ of \cref{EL3.3A} and all $r > r_{0}$, we have
%\begin{equation}\label{EL3.3B}
%\psi_{k}^*(x)=\Exp_{x,k}^{v^*}\left[ e^{\int_0^{\uuptau_{r}} (c(X_s,S_s,v^*(X_s,S_s))-\lamstr)\,\D{s}}
%\psi^*(X_{\uuptau_{r}}, S_{\uuptau_{r}})\right] \quad \forall\,x\in \sB_{r}^c,\, k\in\cS,
%\end{equation}
%where $\uuptau_{r}$ is the hitting time to $\sB_{r}$. 
\end{itemize}
\end{lemma}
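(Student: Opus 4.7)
The plan for (a) is to realize $\psi^{*}$ as a locally-uniform limit of the renormalized Dirichlet eigenfunctions $\psi_n$ from \cref{L3.1}, and for (b) to sandwich $\tilde\lambda$ between the two natural supersolution characterizations of $\lamstr$.

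\smallskip
\emph{Part (a).} Fix $i_0\in\cS$ and renormalize so that $\psi_n(0,i_0)=1$ for all large $n$. Along a measurable minimizing selector $v_n^{*}$, \cref{EL3.1A} becomes a cooperative, weakly-coupled \emph{linear} system whose coefficients are locally bounded uniformly in $n$ (since $\Act$ is compact). The Harnack inequality for such systems (see \cite[Appendix~A]{ABP-21}) then gives, for every $R>0$ and every $n$ with $\sB_{R+1}\subset\sB_n$, the uniform bound $\max_{k\in\cS}\sup_{\sB_R}\psi_{n,k}\le C_R$. Interior $\Sob^{2,p}$ estimates (\cite[Theorem~9.11]{GilTru}) then bound $\{\psi_n\}$ in $\Sob^{2,p}(\sB_R\times\cS)$ for any $p>d$, and a diagonal extraction via the compact embedding $\Sob^{2,p}\hookrightarrow\cC^{1,\alpha}$ yields a subsequence converging in $\cC^{1,\alpha}_{\text{loc}}$ and weakly in $\Sobl^{2,p}$ to some $\psi^{*}\in\Sobl^{2,p}(\Rd\times\cS)$. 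Compactness of $\Act$ and continuity of the data make the infimum in \cref{EcA} stable under this convergence, so $(\cA\psi^{*})_k=\tilde\lambda\,\psi^{*}_k$; the normalization together with the strong maximum principle for cooperative systems forces $\psi^{*}\gg 0$.

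\smallskip
For the bound $\psi_n\le C\,\Lyap$ I would combine the representation \cref{EL3.1B} with a Lyapunov argument. Choose $r$ large enough that $\cK\subset\sB_r$ and, under \cref{A1.1}, such that $\ell_k-\sup_{\xi}c_k\ge 1+\sup_n\lambda_n$ on $\sB_r^c$---this uses inf-compactness of $\ell-\sup c$ together with the a priori bound $\sup_n\lambda_n\le\sE^{*}(c)<\infty$ from \cref{L3.1,L3.2}. Under \cref{A1.2} one instead replaces $\ell$ by the constant $\gamma$ and uses $\norm{c}_\infty<\gamma$. It\^o applied to $t\mapsto e^{\int_0^t\ell(X_s,S_s)\D s}\Lyap(X_t,S_t)$ on $[0,\uuptau_r\wedge\uptau_n\wedge T]$, combined with the Lyapunov inequality on $\cK^c$, monotone convergence as $T\to\infty$, and $\Lyap\ge 1$ (exactly as in the proof of \cref{L3.2}), yields
\begin{equation*}
\Exp_{x,k}^{v_n^{*}}\!\left[e^{\int_0^{\uuptau_r}(c(X_s,S_s,v_n^{*}(X_s,S_s))-\lambda_n)\,\D s}\,\Ind_{\{\uuptau_r<\uptau_n\}}\right]\,\le\,\Lyap_k(x).
\end{equation*}
Substituting into \cref{EL3.1B} and bounding $\psi_n(X_{\uuptau_r},S_{\uuptau_r})$ by the Harnack constant on $\partial\sB_r\times\cS$ then gives $\psi_{n,k}(x)\le C_r\Lyap_k(x)$ for $|x|>r$, while for $|x|\le r$ the bound follows from Harnack and $\Lyap\ge 1$. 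Passing to the limit transfers $\psi^{*}\in\order(\Lyap)$.

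\smallskip
\emph{Part (b).} Since $\psi^{*}\gg 0$ solves $(\cA\psi^{*})_k=\tilde\lambda\,\psi^{*}_k$, it lies in $\Uppsi^+(\tilde\lambda)$ as a (super)solution, whence $\lamstr\le\tilde\lambda$ by \cref{E3.8}. Conversely, if $\tilde\psi\in\Uppsi^+(\lambda)$, then (by the Sobolev embedding of $\Sobl^{2,d}$ into $\cC_{\text{loc}}$) its restriction to each $\bar{\sB}_n$ meets the conditions defining $\Uppsi^+_{\sB_n}(\lambda)$; only interior positivity and the supersolution bound are required, not boundary vanishing. The uniqueness/extremality argument in the second half of the proof of \cref{T2.3} then forces $\lambda\ge\lambda_n$ for every $n$, and letting $n\to\infty$ yields $\lambda\ge\tilde\lambda$. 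Taking the infimum over such $\lambda$ gives $\lamstr\ge\tilde\lambda$, completing the sandwich.

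\smallskip
\emph{Main obstacle.} The delicate point is the uniform $\order(\Lyap)$ bound: the stopping time $\uuptau_r$ in the Feynman--Kac representation of $\psi_n$ has to be matched with the Lyapunov supermartingale, so that $r$ simultaneously engulfs the compact set $\cK$ from the Lyapunov inequality \emph{and} the sublevel set where $\ell-\sup c$ fails to exceed $\sup_n\lambda_n$. The latter choice is only possible because of the a priori boundedness of $\lambda_n$ from \cref{L3.2}, and \cref{A1.1,A1.2} require parallel but distinct treatments---inf-compactness of $\ell-\sup c$ in the first case versus the scalar spectral gap $\gamma-\norm{c}_\infty$ in the second---which one would handle side by side.
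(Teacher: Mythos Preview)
Your proof is correct and follows essentially the same route as the paper: Harnack plus interior $\Sob^{2,p}$ estimates and a diagonal extraction for the limiting eigenfunction, the stochastic representation \cref{EL3.1B} combined with the Lyapunov supermartingale for the growth bound, and the sandwich $\lambda_n\le\lamstr\le\tilde\lambda$ obtained by restricting global supersolutions to $\sB_n$ for part (b). The only notable difference is that the paper in fact establishes the sharper bound $\psi^{*}\le\kappa\,\Lyap^{\theta}$ for some $\theta\in(0,1)$ (by first showing $\lamstr\ge 0$, then arranging $\sup_{\xi}c_k-\lambda_n\le\theta\ell_k$ on $\sB_{r_0}^c$ and applying a H\"older-type inequality to the representation), which exceeds the stated $\order(\Lyap)$ but is precisely what is invoked in the subsequent \cref{L-new}.
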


\begin{proof}
From \cref{L3.1}(a) and \cref{L3.2} it is clear that $\Tilde{\lambda}$ exist and $\Tilde{\lambda} < \infty$.
Let $v_{n}^{*}$ be a minimizing selector of \cref{EL3.1A}. Thus, we have
\begin{equation*}
(\cA^{v_{n}^{*}}\psi_{n})_k(x) = \lambda_{n}\psi_{n,k}(x) \quad \text{in}\,\,\sB_{n}\quad\forall\,\,k\in\cS,
\end{equation*} where $\cA^{v_{n}^{*}}$ is defined as 
\begin{align*}
(\cA^{v_{n}^{*}} f)_k(x) \,\df\, \trace(a_{k}(x)\grad^2 f_{k}(x)) + \{b_{k}(x,v_{n}^{*}(x,k))\cdot\grad f_{k}(x) & + c_{k}(x,v_{n}^{*}(x,k))f_{k}(x)\nonumber\\
& + \sum_{j=1}^{N} m_{k,j}(x,v_{n}^{*}(x,j))f_{j}(x)\}.
\end{align*}
Let $\sK\subset\sB_{n}$ be a compact, without loss of generality we assume that $0\in\sK$. Since $\min_{k\in\cS}\psi_{n,k}(0) = 1$ (after normalization), applying Harnack's inequality \cite[Theorem 2]{Sirakov}, it follows that
\begin{equation*}
\sup_{y\in\sK}\,\max_{k\in\cS}\psi_{n,k}(y) \leq \kappa \, ,
\end{equation*}
for some constant $\kappa$ independent of $n$.
Thus, by \cite[Theorem~9.11]{GilTru} we deduce that for any $ Q\subset \sK$  
\begin{equation*}
\norm{\psi_n}_{\Sob^{2,p}(Q\times\cS)} \leq \kappa_{1}, \,\, p > d \,,
\end{equation*} for some $\kappa_{1} > 0$ uniformly in $n$. 
Hence by a standard diagonalization argument we can extract a subsequence $\{\psi_{n_m}\}$ such that 
\begin{equation*}
\psi_{n_m}\to \psi^*\quad \text{in\ } \Sobl^{2,p}(\Rd\times\cS)\,\,\,(\text{weakly}), \quad \text{and}\quad \psi_{n_m}\to \psi^*\quad \text{in\ } \cC^{1, \alpha}_{loc}(\Rd\times\cS)\,\,\,(\text{strongly})
\end{equation*}
for some $\psi^*\in \Sobl^{2,p}(\Rd\times\cS)$.
Moreover, passing the limit in \eqref{EL3.1A}, we have 
\begin{equation}\label{EL3.3C}
(\cA\psi^*)_k(x) \,=\, \Tilde{\lambda}\psi_{k}^*(x)\quad\text{in}\,\, \Rd\quad\forall\,\,k\in\cS\,.
\end{equation}
Since $\min_{k\in\cS}\psi_{k}(0) \ge 1$, by an application of Harnack's inequality \cite[Theorem 2]{Sirakov}, it follows that $\psi^* \gg 0$ in $\Rd\times\cS$. 
From \cref{EL3.3C} it is clear that $ \Tilde{\lambda} \ge \lamstr$. Also, from the definition \cref{E3.8}, we have $\lambda_{n} \le \lamstr$ for all $n\in\NN$. This gives us $ \Tilde{\lambda} \le \lamstr$. Therefore, we obtain $ \Tilde{\lambda} = \lamstr$. This proves (b).

Next we want to prove that $\lamstr \ge 0$. Suppose that $\lamstr < 0$. Then for any minimizing selector $\hat{v}$ of \cref{EL3.3C}, applying It\^{o}-Krylov formula, for any $n > 1$, $T > 0$ we obtain
\begin{equation*}
\psi_{k}^*(x)=\Exp_{x,k}^{\hat{v}}\left[ e^{\int_0^{\uptau_n\wedge\uuptau_{1}\wedge T} (c(X_s,S_s, \hat{v}(X_s,S_s))-\lamstr)\,\D{s}}
\psi^*(X_{\uptau_n\wedge\uuptau_{1}\wedge T}, S_{\uptau_n\wedge\uuptau_{1}\wedge T})\right] \quad \forall\,x\in \sB_{1}^c\cap\sB_{n},\, k\in\cS.
\end{equation*} Since $(c - \lamstr) > 0$, letting $T\to\infty$, and $n\to\infty$ and applying Fatou's lemma, it follows that
\begin{equation*}
\psi_{k}^*(x) \ge \Exp_{x,k}^{\hat{v}}\left[ e^{\int_0^{\uuptau_{1}} (c(X_s,S_s, \hat{v}(X_s,S_s))-\lamstr)\,\D{s}}
\psi^*(X_{\uuptau_{1}}, S_{\uuptau_{1}})\right] \ge \min_{k\in\cS}\,\inf_{y\in\sB_{1}}\psi_{k}^*(y) \quad \forall\,x\in \sB_{1}^c,\, k\in\cS.
\end{equation*}
Using the above estimate, It\^{o}-Krylov formula and Fatous's lemma, we get
\begin{align*}
\psi_{k}^*(x) & \ge \Exp_{x,k}^{\hat{v}}\left[ e^{\int_0^{T} (c(X_s,S_s, \hat{v}(X_s,S_s))-\lamstr)\,\D{s}}
\psi^*(X_{T}, S_{T})\right]\\
& \ge \min_{k\in\cS}\,\inf_{y\in\sB_{1}}\psi_{k}^*(y)\Exp_{x,k}^{\hat{v}}\left[ e^{\int_0^{T} (c(X_s,S_s, \hat{v}(X_s,S_s))-\lamstr)\,\D{s}}\right].
\end{align*} 
Taking logarithm on both sides, dividing by $T$, letting $T\to\infty$, we deduce that
$\lamstr \ge  \sE_{x,k}(c, \hat{v}) \ge 0$. This contradicts the fact that $\lamstr < 0.$ Thus, we obtain $\lamstr \ge 0.$ 
 
Now choose $r_{0}$ large enough so that
$(\sup_{\xi\in\Act}c_{k}(x,\xi)-\lamstr)\le \theta\ell_{k}$ (or $\theta\gamma$) in $\sB_{r_{0}}^c$ and $\cK\subset\sB_{r_{0}}$, for some $\theta\in (0,1)$. Since $\lambda_{n}$ increases to $\lamstr$, for large $n$ we have $(\sup_{\xi\in\Act}c_{k}(x,\xi)-\lambda_{n})\le \theta\ell_{k}$ (or $\theta\gamma$) in $\sB_{r_{0}}^c$.
This is possible due to \cref{A1.1} and \cref{A1.2}. From \cref{L3.1}, for sufficiently large $n$ and $x\in\sB_{r_{0}}^{c}\cap\sB_{n}$, it follows that
\begin{align*}
\psi_{n,k}(x) \,& =\, \Exp_{x,k}^{v_{n}^{*}}\left[ e^{\int_0^{\uuptau_{r_{0}}} (c(X_s,S_s,v_{n}^{*}(X_s,S_s))-\lambda_n) \D{s}}\, \psi_n(X_{\uuptau_{r_{0}}}, S_{\uuptau_{r_{0}}})\Ind_{\{\uuptau_r<\uptau_n\}}\right]
\\
& \le \frac{\max_{k\in\cS}\sup_{y\in\sB_{r_{0}}}\psi_{n,k}(y)}{\min_{k\in\cS}\inf_{y\in\sB_{r_{0}}}\Lyap_{k}(y)}\Exp_{x,k}^{v_{n}^{*}}\left[ e^{\int_{0}^{\uuptau_{r_{0}}}\theta\ell(X_s,S_s)}\Lyap(X_s,S_s)\D{s} \right]
\\
& \le \frac{\max_{k\in\cS}\sup_{y\in\sB_{r_{0}}}\psi_{n,k}(y)}{\min_{k\in\cS}\inf_{y\in\sB_{r_{0}}}\Lyap_{k}(y)}\left(\Exp_{x,k}^{v_{n}^{*}}\left[ e^{\int_{0}^{\uuptau_{r_{0}}}\ell(X_s,S_s)}\Lyap(X_s,S_s)\D{s} \right]\right)^{\theta}\\
&\le \kappa_{2} \Lyap_{k}^{\theta}(x),
\end{align*} 
for some positive constant $\kappa_{2}$ (using Harnack's inequality one can ensure that $\kappa_{2}$ is independent of $n$), where in the last inequality we have used \cref{A1.1}. Similar result holds for $\ell$ replaced by $\gamma$ under \cref{A1.2}. Therefore, we obtain $\psi_{n,k} \le \kappa_{2} \Lyap_{k}^{\theta}$ for all sufficiently large $n\in\NN$, $k\in\cS$. This implies that $\psi_{k}^{*} \le \kappa_{2}\Lyap_{k}^{\theta}$ for all $k\in\cS$. Since $\theta \in(0,1)$, this proves (a). This completes the proof.
\end{proof}

Next we obtain stochastic representation formula
for $\psi^*$ which will play crucial role in establishing uniqueness results.
\begin{lemma}\label{L-new}
Suppose that either \cref{A1.1} or \cref{A1.2} holds. Let $\psi^*$ be the eigenfunction obtained by \cref{L3.3}. There exists 
$r_\circ$ such that   for any $v\in\Usm$ we have
\begin{equation}\label{Ln-01}
\psi^*_{k}(x) \, \leq  \, \Exp_{x,k}^{v}\left[ e^{\int_0^{\uuptau_{r}} (c(X_s,S_s,v(X_s,S_s))-\lamstr) \D{s}}\,
\psi^*(X_{\uuptau_{r}}, S_{\uuptau_{r}})\right] \quad \forall\,x\in \sB_{r}^c\,,\,\,k\in\cS\,,
\end{equation}
for all $r\geq r_\circ$, where $\uuptau_{r}$ is the hitting time to $\sB_{r}$.
Furthermore, for any minimizing selector $v^*$ of \cref{EL3.3A} and all $r > r_{0}$, we have
\begin{equation}\label{EL3.3B}
\psi_{k}^*(x)=\Exp_{x,k}^{v^*}\left[ e^{\int_0^{\uuptau_{r}} (c(X_s,S_s,v^*(X_s,S_s))-\lamstr)\,\D{s}}
\psi^*(X_{\uuptau_{r}}, S_{\uuptau_{r}})\right] \quad \forall\,x\in \sB_{r}^c,\, k\in\cS.
\end{equation}
\end{lemma}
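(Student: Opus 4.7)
I will apply It\^o-Krylov's formula to
$$\phi_t \df \exp\!\Bigl(\int_0^t \bigl(c(X_s,S_s,v(X_s,S_s)) - \lamstr\bigr)\,\D s\Bigr)\,\psi^*(X_t, S_t),$$
together with optional stopping and a uniform integrability estimate based on the growth bound $\psi^* \leq \kappa_2 \Lyap^\theta$ with $\theta\in(0,1)$ extracted inside the proof of \cref{L3.3}. Since $\cA$ is the pointwise infimum of $\cA^v$ over $v\in\Usm$, the eigen-equation \cref{EL3.3A} yields $(\cA^v\psi^*)_k \geq \lamstr\psi^*_k$ a.e.\ for every $v\in\Usm$, with equality when $v = v^*$ is a minimizing selector. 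By It\^o-Krylov, $\phi_t$ is therefore a local submartingale under $\Prob^v_{x,k}$, and a local martingale in the case $v=v^*$.

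\textbf{Truncation and limits.} I would fix $r_\circ$ so that $\cK \subset \sB_{r_\circ}$ and $\sup_{\xi\in\Act} c_k(\cdot,\xi) - \lamstr \leq \theta\ell_k$ on $\sB_{r_\circ}^c\times\cS$ under \cref{A1.1} (respectively $\leq \theta\gamma$ under \cref{A1.2}); this is possible by inf-compactness of $\ell - \sup_\xi c$ (resp.\ boundedness of $c$) exactly as in the proof of \cref{L3.3}. For $r\ge r_\circ$, $x \in \sB_r^c$, $k\in\cS$, and $n>r$, optional stopping at the bounded stopping time $\uptau_n\wedge\uuptau_r\wedge T$ gives
$$\psi^*_k(x) \,\leq\, \Exp^v_{x,k}\bigl[\phi_{\uptau_n\wedge\uuptau_r\wedge T}\bigr],$$
with equality when $v=v^*$. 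Non-explosion from \hyperlink{A2}{(A2)} gives $\uptau_n\nearrow\infty$, while the Foster--Lyapunov inequality from \cref{A1.1} or \cref{A1.2} yields $\uuptau_r<\infty$ a.s.\ under every $v\in\Usm$; hence $\uptau_n\wedge\uuptau_r\wedge T\to\uuptau_r$ a.s.\ as $n,T\to\infty$.

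\textbf{Main obstacle: uniform integrability.} The principal remaining difficulty is to interchange limit and expectation. I would establish a uniform $(1+\varepsilon)$-moment bound on the family $\{\phi_{\sigma} : \sigma\le\uuptau_r\}$. On $\{t\le\uuptau_r\}$ we have $X_t\in\sB_r^c$, so combining $\psi^*\leq\kappa_2\Lyap^\theta$ with $c-\lamstr\leq\theta\ell$ on $\sB_r^c$ gives
$$\phi_t \,\le\, \kappa_2\,\exp\!\Bigl(\theta\!\int_0^t\!\ell(X_s,S_s)\,\D s\Bigr)\Lyap^\theta(X_t,S_t).$$
Choosing $p\in(1,1/\theta)$ and applying Jensen's inequality, for any stopping time $\sigma\le\uuptau_r$,
$$\Exp^v_{x,k}[\phi_\sigma^p] \,\le\, \kappa_2^p\Bigl(\Exp^v_{x,k}\Bigl[\exp\!\Bigl(\int_0^\sigma\!\ell\,\D s\Bigr)\Lyap(X_\sigma,S_\sigma)\Bigr]\Bigr)^{\theta p}.$$
Since $\cK\subset\sB_r$, \cref{A1.1} gives $(\Lg\Lyap)_k+\ell_k\Lyap_k\leq 0$ on $\sB_r^c$, so $\exp(\int_0^\cdot\ell\,\D s)\Lyap(X_\cdot,S_\cdot)$ is a non-negative local supermartingale up to $\uuptau_r$; stopping it at $\sigma$ and using Fatou bounds the inner expectation by $\Lyap(x)$. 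Hence $\sup_\sigma\Exp^v_{x,k}[\phi_\sigma^p]\leq\kappa_2^p\Lyap(x)^{\theta p}<\infty$, which delivers uniform integrability; the argument under \cref{A1.2} is identical with $\gamma$ in place of $\ell$. Vitali's convergence theorem then upgrades the bounded-stopping-time identity to \cref{Ln-01} for $v\in\Usm$ and to \cref{EL3.3B} for $v^*$, completing the proof.
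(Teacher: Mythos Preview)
Your proof is correct and follows a genuinely different route from the paper's. After applying It\^o--Krylov up to $\uuptau_r\wedge\uptau_n$, the paper splits the expectation into the piece on $\{\uuptau_r<\uptau_n\}$ (which converges monotonically) and the piece on $\{\uptau_n<\uuptau_r\}$, which must be shown to vanish. To kill that second piece the paper introduces an auxiliary level-set parameter $m$ via the sets $\Gamma(m,n)=\{\psi^*\ge m\}$, estimates separately on $\Gamma(m,n)$ and its complement using $\psi^*\le\kappa_2\Lyap^\theta$ and the bound $\Lyap^\theta\le(\nicefrac{m}{\kappa_2})^{(\theta-1)/\theta}\Lyap$ on $\Gamma(m,n)$, and then sends $n\to\infty$ followed by $m\to\infty$; the equality \cref{EL3.3B} for $v^*$ is obtained from a separate Fatou argument. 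Your approach bypasses this two-parameter splitting entirely: from $\phi_\sigma\le\kappa_2\bigl(e^{\int_0^\sigma\ell}\Lyap(X_\sigma,S_\sigma)\bigr)^\theta$ for $\sigma\le\uuptau_r$, Jensen's inequality for the concave map $t\mapsto t^{p\theta}$ (with $p\theta<1$) together with the supermartingale bound $\Exp^v_{x,k}\bigl[e^{\int_0^\sigma\ell}\Lyap(X_\sigma,S_\sigma)\bigr]\le\Lyap_k(x)$ gives a uniform $L^p$ bound, and Vitali then delivers both the inequality for general $v$ and the equality for $v^*$ in one stroke. The paper's argument is more hands-on and avoids invoking Vitali, while yours is more concise and exploits the strict inequality $\theta<1$ in the growth bound $\psi^*\in\order(\Lyap^\theta)$ in a sharper way.
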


\begin{proof}
We prove \eqref{Ln-01} under \cref{A1.1}. An analogous argument also go through under \cref{A1.2}. Recall from the proof of \cref{L3.3} that $\psi^*\in \Sobl^{2,p}(\Rd\times \cS)\cap\order(\Lyap^\theta)$, $p>d$, for some $\theta\in (0, 1)$. In particular, $\psi^*\leq \kappa\Lyap^\theta$ for some $\kappa>0$. Choose $r_\circ$ large enough so that $\max_{\zeta\in \Act} c(x, \zeta)-\lamstr\leq \theta \ell(x)$ for all $|x|\geq r_\circ$ and $k\in\cS$. Fix any $r\geq r_\circ$ and $v\in\Usm$. We may also assume that $\cK\subset \sB_{r_\circ}$ where $\cK$ is given by \cref{A1.1}.
Under \cref{A1.1}, applying It\^{o}-Krylov formula (as in \cref{L3.2}), it follows that
\begin{align*}
\Lyap_{k}(x) \,& \ge\, \Exp_{x,k}^{v}\left[e^{\int_0^{T\wedge\uuptau_r} (\ell(X_s,S_s)-g(X_s,S_s)) \,\D{s}}\,
 \Lyap(X_{T\wedge\uuptau_r},S_{T\wedge\uuptau_r})\right]\\
\,& \ge\, \bigl(\inf_{\Rd} \,\min_{k\in\cS}\,\Lyap_{k}\bigr)\,
\Exp_{x,k}^{v}\left[e^{\int_0^{T\wedge\uuptau_r} \ell(X_s,S_s) \,\D{s}}\,\right] \quad\forall\,x\in\sB_{r}^c\,,\,\, T > 0,\,\, k\in\cS\,. 
\end{align*}
Since $\Prob^v_x(\uuptau_{r} < \infty)=1$, letting $T\to\infty$, by Fatou's lemma we obtain
\begin{equation}\label{EL3.3LyapBd}
\Lyap_{k}(x)\, \ge\, \bigl(\inf_{\Rd} \,\min_{k\in\cS}\,\Lyap_{k}\bigr)\,
\Exp_{x,k}^{v}\left[e^{\int_0^{\uuptau_r} \ell(X_s,S_s) \,\D{s}}\,\right] \quad\forall\,x\in\sB_{r}^c\,,\,\, k\in\cS\,. 
\end{equation}
Since 
$$ \Exp_{x,k}^{v}\left[ e^{\int_0^{\uuptau_{r}\wedge\uptau_n} \ell (X_s,S_s) \D{s}}\,\right]= \Exp_{x,k}^{v}\left[ e^{\int_0^{\uuptau_{r}} \ell (X_s,S_s) \D{s}}\,\Ind_{\{\uuptau_{r}<\uptau_n\}}
\right] + \Exp_{x,k}^{v}\left[ e^{\int_0^{\uptau_n} \ell (X_s,S_s) \D{s}}\,
\Ind_{\{\uptau_n<\uuptau_{r}\}}\right],$$
and the first two expressions converges to the same limit as $n\to\infty$, by monotone convergence theorem, we must have
\begin{equation}\label{EL3.3RA}
\lim_{n\to\infty} \Exp_{x,k}^{v}\left[ e^{\int_0^{\uptau_n} \ell (X_s,S_s) \D{s}}\,\Ind_{\{\uptau_n<\uuptau_{r}\}}\right]=0\,.
\end{equation}
For $m\geq 1$, let us define $\Gamma(m, n)=\{(x, k) \in\sB_n\times\cS \; :\; \psi^*(x, k)\geq m\}$. Since $(\cA^{v} \psi^*)_k(x) \geq \lamstr\psi_k^*(x)$ in $\Rd$ for all $k\in\cS$, applying It\^{o}-Krylov formula  we obtain
\begin{align}\label{EL3.3RB}
\psi^*_k(x) & \leq
\Exp_{x,k}^{v}\left[ e^{\int_0^{\uuptau_{r}\wedge\uptau_n} (c(X_s,S_s,v(X_s,S_s))-\lamstr) \D{s}}\, \psi^*(X_{\uuptau_{r}\wedge\uptau_n}, S_{\uuptau_{r}\wedge\uptau_n})\right]\nonumber
\\
&= \Exp_{x,k}^{v}\left[ e^{\int_0^{\uuptau_{r}} (c(X_s,S_s,v(X_s,S_s))-\lamstr) \D{s}}\, \psi^*(X_{\uuptau_{r}}, S_{\uuptau_{r}}) \Ind_{\{\uuptau_{r}<\uptau_n\}}\right]\nonumber\\
&\qquad + \Exp_{x,k}^{v}\left[ e^{\int_0^{\uptau_n} (c(X_s,S_s,v(X_s,S_s))-\lamstr) \D{s}}\,\psi^*(X_{\uptau_n}, S_{\uptau_n})\Ind_{\{\uptau_n<\uuptau_{r}\}} \right]\,.
\end{align}
Let us now compute the last term of \eqref{EL3.3RB} as follows.
\begin{align}\label{EL3.3RC}
& \Exp_{x,k}^{v}\left[ e^{\int_0^{\uptau_n} (c(X_s,S_s,v(X_s,S_s))-\lamstr) \D{s}}\,\psi^*(X_{\uptau_n}, S_{\uptau_n})\Ind_{\{\uptau_n<\uuptau_{r}\}} \right]\nonumber\\
&\quad \leq m \Exp_{x,k}^{v}\left[ e^{\int_0^{\uptau_n} \theta\ell(X_s, S_s) \D{s}}\,\Ind_{\{\uptau_n<\uuptau_{r}\}} \right]\nonumber\\
&\qquad + \Exp_{x,k}^{v}\left[ e^{\int_0^{\uptau_n}   \theta\ell(X_s,S_s) \D{s}}\,\psi^*(X_{\uptau_n}, S_{\uptau_n})\Ind_{(X_{\uptau_n},S_{\uptau_n})\in\Gamma(m,n)}\Ind_{\{\uptau_n<\uuptau_{r}\}} \right]\nonumber\\
&\quad \leq m \Exp_{x,k}^{v}\left[ e^{\int_0^{\uptau_n} \theta\ell(X_s, S_s) \D{s}}\,\Ind_{\{\uptau_n<\uuptau_{r}\}} \right] \nonumber\\
&\qquad + \kappa_2 \Exp_{x,k}^{v}\left[ e^{\int_0^{\uptau_n}  \theta\ell(X_s,S_s) \D{s}}\,
(\Lyap(X_{\uptau_n}, S_{\uptau_n}))^\theta\Ind_{(X_{\uptau_n},S_{\uptau_n}) \in\Gamma(m,n)}\Ind_{\{\uptau_n<\uuptau_{r}\}} \right]\nonumber\\
&\quad \leq m \Exp_{x,k}^{v}\left[ e^{\int_0^{\uptau_n} \theta\ell(X_s, S_s) \D{s}}\,\Ind_{\{\uptau_n<\uuptau_{r}\}} \right]\nonumber\\
&\qquad + \kappa_2 \left[\frac{m}{\kappa_2}\right]^{\frac{\theta-1}{\theta}} \Exp_{x,k}^{v}\left[ e^{\int_0^{\uptau_n}  \theta\ell(X_s,S_s) \D{s}}\, \Lyap(X_{\uptau_n}, S_{\uptau_n})\Ind_{\{\uptau_n<\uuptau_{r}\}} \right] \nonumber\\
&\quad \leq m \Exp_{x,k}^{v}\left[ e^{\int_0^{\uptau_n} \theta\ell(X_s, S_s) \D{s}}\, \Ind_{\{\uptau_n<\uuptau_{r}\}} \right] 
+ \kappa_2 \left[\frac{m}{\kappa_2}\right]^{\frac{\theta-1}{\theta}}\Lyap_k(x)\to 0,
\end{align}
first letting $n\to\infty$ and then letting $m\to\infty$ and using \eqref{EL3.3RA}, where in the last line we also use
\eqref{EL3.3LyapBd}. Thus letting $n\to\infty$ in \eqref{EL3.3RB}, we deduce that
\begin{equation}\label{EL3.3F}
\psi_{k}^*(x) \le \Exp_{x,k}^{v}\left[ e^{\int_0^{\uuptau_{r}} (c(X_s,S_s, v(X_s,S_s))-\lamstr)\,\D{s}}
\psi^*(X_{\uuptau_{r}}, S_{\uuptau_{r}})\right] \quad \forall\,x\in \sB_{r}^c,\,\,\, k\in\cS.
\end{equation}
This gives us \eqref{Ln-01}.

To prove \eqref{EL3.3B} we choose
any minimizing selector $v^{*}$ of \cref{EL3.3A}. Applying It\^{o}-Krylov formula and Fatou's lemma, we get
\begin{equation}\label{EL3.3D}
\psi_{k}^*(x) \ge \Exp_{x,k}^{v^*}\left[ e^{\int_0^{\uuptau_{r}} (c(X_s,S_s, v^*(X_s,S_s))-\lamstr)\,\D{s}}
\psi^*(X_{\uuptau_{r}}, S_{\uuptau_{r}})\right] \quad \forall\,x\in \sB_{r}^c,\,\,\, k\in\cS.
\end{equation}
Combining \eqref{EL3.3F} for $v=v^*$ and \eqref{EL3.3D} we
obtain \eqref{EL3.3B}. Hence the proof.
\end{proof}
In the next theorem, using of the stochastic representation of the eigenfunctions we deduce the uniqueness of the solution of \cref{EL3.3A} in certain class of functions\,.

%%%%%%%%%%%%%%%%%%%%%%%%%%%%%%%%%%%%%%%%%%%%%%%%%%%%%%%%%%%%%%%%%%%%%%%%%%%%%%%%
\begin{theorem}\label{T3.1}
Suppose that either \cref{A1.1} or \cref{A1.2} holds.
Let $\psi\in \Sobl^{2,p}(\Rd)$, $p>d$, $\psi\gg 0$, be a function satisfying
\begin{equation}\label{ET3.1A}
(\cA \psi)_k(x) \le \lamstr \psi_{k}(x), \quad \text{in\ } \Rd,\quad\forall \,\, k\in\cS\,.
\end{equation}
Then we have $\psi=\kappa\psi^*$ for some $\kappa>0$.
%In addition, suppose that there exists $r_{1} > 0$ such that for all $r\ge r_{1}$ and any minimizing selector $v$ of \cref{ET3.1A}, we have 
%\begin{equation}\label{ET3.1B}
%\psi_{k}(x) \, = \, \Exp_{x,k}^{v}\left[ e^{\int_0^{\uuptau_{r}} (c(X_s,S_s,v(X_s,S_s))-\lamstr) \D{s}}\,
%\psi(X_{\uuptau_{r}}, S_{\uuptau_{r}})\right] \quad \forall\,x\in \sB_{r}^c\,.
%\end{equation}
\end{theorem}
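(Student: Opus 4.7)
The plan is to compare the given supersolution $\psi$ with the eigenfunction $\psi^*$ from \cref{L3.3} by combining a one-sided stochastic representation with a strong maximum principle for cooperative systems, very much in the spirit of the uniqueness step in the proof of \cref{T2.3}. I would first fix a measurable selector $v^\psi\colon\Rd\times\cS\to\Act$ realising the infimum in $(\cA\psi)_k$ (existence by \cite[Theorem~18.17]{AliBor}). Since $v^\psi$ is minimising for $\psi$, \cref{ET3.1A} reads $(\cA^{v^\psi}\psi)_k\le\lamstr\psi_k$ in $\Rd$, whereas $\cA^{v^\psi}\psi^*\ge\cA\psi^*=\lamstr\psi^*$ because $v^\psi$ is only admissible, not minimising, for the infimum defining $\cA\psi^*$. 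Repeating the It\^o--Krylov/monotone-convergence/Fatou argument of \cref{L-new} with $\psi$ in place of $\psi^*$ and noting that no $\order(\Lyap)$ control is required (the tail term on $\{\uptau_n<\uuptau_r\}$ is non-negative and can simply be discarded), I arrive at
\begin{equation*}
\psi_k(x)\,\ge\,\Exp_{x,k}^{v^\psi}\Big[e^{\int_0^{\uuptau_r}(c(X_s,S_s,v^\psi(X_s,S_s))-\lamstr)\D{s}}\,\psi(X_{\uuptau_r},S_{\uuptau_r})\Big]\qquad\forall\,x\in\sB_r^c,\,k\in\cS,\,r\ge r_\circ,
\end{equation*}
while \eqref{Ln-01} applied with $v=v^\psi$ gives the corresponding $\le$-inequality for $\psi^*$.

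I would then set $\kappa^*\df\sup\{t>0\colon \psi\ge t\psi^*\text{ on }\Rd\times\cS\}$ and verify $\kappa^*\in(0,\infty)$. Finiteness is automatic. For strict positivity, fix any $r\ge r_\circ$, let $\beta_r\df\min_{\bar\sB_r\times\cS}(\psi/\psi^*)>0$ by compactness and positivity, and use $\psi(X_{\uuptau_r},S_{\uuptau_r})\ge\beta_r\psi^*(X_{\uuptau_r},S_{\uuptau_r})$ together with the two displayed inequalities to obtain $\psi_k(x)\ge\beta_r\psi^*_k(x)$ on $\sB_r^c\times\cS$; combined with the trivial bound on $\bar\sB_r\times\cS$, this yields $\kappa^*\ge\beta_r>0$. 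Setting $w\df\psi-\kappa^*\psi^*\ge 0$ and subtracting $\kappa^*$ times the representation for $\psi^*$ from the one for $\psi$ gives
\begin{equation*}
w_k(x)\,\ge\,\Exp_{x,k}^{v^\psi}\Big[e^{\int_0^{\uuptau_r}(c(X_s,S_s,v^\psi(X_s,S_s))-\lamstr)\D{s}}\,w(X_{\uuptau_r},S_{\uuptau_r})\Big]\qquad\forall\,|x|>r,\,k\in\cS.
\end{equation*}

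Finally I would argue that $w$ must vanish at some point. If instead $w>0$ everywhere, then $\alpha_r\df\min_{\bar\sB_r\times\cS}(w/\psi^*)>0$, and the same bootstrap from the previous paragraph (applied with $w$ and $\alpha_r$ in place of $\psi$ and $\beta_r$) forces $w\ge\alpha_r\psi^*$ on $\Rd\times\cS$, hence $\psi\ge(\kappa^*+\alpha_r)\psi^*$, contradicting the definition of $\kappa^*$. Thus $w_{k_0}(x_0)=0$ for some $(x_0,k_0)$. Since $w_j\ge 0$ and $m_{k_0,j}(\cdot,v^\psi)\ge 0$ for $j\ne k_0$, the inequality $\cA^{v^\psi}w\le\lamstr w$ reduces on the $k_0$-th component to a scalar linear inequality of the form $L_{k_0}w_{k_0}+q_{k_0}w_{k_0}\le 0$ with uniformly elliptic $L_{k_0}$ and bounded $q_{k_0}$, so the strong maximum principle \cite[Theorem~9.6]{GilTru} forces $w_{k_0}\equiv 0$ on $\Rd$; the cooperative/irreducibility propagation argument under \hyperlink{A4}{(A4)} used at the end of the proof of \cref{T2.3} then extends this zero to every component, giving $\psi=\kappa^*\psi^*$. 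The main delicate point in the plan is passing to the limit in the It\^o--Krylov identity for the potentially unbounded $\psi$; it works precisely because the supersolution hypothesis gives a one-sided inequality whose discarded tail term on $\{\uptau_n<\uuptau_r\}$ is non-negative.
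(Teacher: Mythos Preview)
Your proposal is correct and follows essentially the same route as the paper: fix a measurable minimising selector $v$ for \cref{ET3.1A}, combine the supermartingale inequality $\psi_k(x)\ge \Exp_{x,k}^{v}\bigl[e^{\int_0^{\uuptau_r}(c-\lamstr)\D s}\,\psi(X_{\uuptau_r},S_{\uuptau_r})\bigr]$ with the opposite inequality \eqref{Ln-01} for $\psi^*$, produce a nonnegative difference that touches zero, and conclude via the scalar strong maximum principle together with the cooperative/irreducibility propagation of \hyperlink{A4}{(A4)}. The only cosmetic difference is the bookkeeping for the touching constant: the paper picks $\kappa$ so that $\kappa\psi-\psi^*$ already vanishes at some point of $\bar\sB_r$ and then extends nonnegativity to $\Rd$, whereas you define $\kappa^*=\sup\{t:\psi\ge t\psi^*\}$, show $\kappa^*\in(0,\infty)$ via the same stochastic comparison, and reach the touching point by contradiction; the two constants coincide and the remainder of the argument is identical. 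One small wording point: under \cref{A1.1} the zeroth-order coefficient $q_{k_0}=c_{k_0}+m_{k_0,k_0}-\lamstr$ need not be globally bounded, but it is locally bounded by \hyperlink{A1}{(A1)}, which is all that \cite[Theorem~9.6]{GilTru} requires.
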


\begin{proof}
Let $v$ be a minimizing selector of \eqref{ET3.1A}, that is,
$(\cA^v\psi)_k(x)=\lamstr \psi_k$ for all $k\in \cS$.
Lex $r\geq r_\circ$ where $r_\circ$ is obtained by \cref{L-new}. Applying It\^{o}-Krylov formula and Fatou's lemma, it is easily seen that
\begin{equation}\label{ET3.1B}
\psi_{k}(x) \ge \Exp_{x,k}^{v}\left[ e^{\int_0^{\uuptau_{r}} (c(X_s,S_s, v(X_s,S_s))-\lamstr)\,\D{s}}
\psi(X_{\uuptau_{r}}, S_{\uuptau_{r}})\right] \quad \forall\,x\in \sB_{r}^c,\,\,\, k\in\cS.
\end{equation}
We can choose suitable constant $\kappa>0$ so that $\kappa\psi - \psi^*$ on $\sB_{r}$ and for some $k\in\cS$, \, $\kappa\psi_{k} - \psi_{k}^*$ attains its minimum value $0$ in $\sB_{r}$.
Then by \cref{Ln-01} and \cref{ET3.1B} it follows that $\kappa\psi_{k} -\psi_{k}^*\ge 0$  in $\Rd$ and its minimum value is attained in $\sB_{r}$. Since $v$ is a minimizing selector of \cref{ET3.1A}, from \cref{EL3.3A}, we deduce that
\begin{equation*}
(\Lg^{v} (\kappa\psi - \psi^*))_k(x) - (c_{k}(x,v(x,k)) + m_{k,k}(x,v(x,k)) -\lamstr)^-(\kappa\psi_{k} - \psi_{k}^*)(x) \,\le\, 0\quad \text{in\ } \Rd\,.
\end{equation*}
Therefor, by an application of the strong maximum principle \cite[Theorem 9.6]{GilTru} and using the fact that the system is irreducible, we conclude that $\kappa\psi^*=\psi$. This completes the proof.
\end{proof}

Next we characterize $\lamstr$ as the optimal value of our risk-sensitive ergodic optimal control problem. 

%%%%%%%%%%%%%%%%%%%%%%%%%%%%%%%%%%%%%%%%%%%%%%%%%%%%%%%%%%%%%%%%%%%%%%%%%%%%%%%%
\begin{lemma}\label{L3.5}
Suppose that either \cref{A1.1} or \cref{A1.2} holds.
Then $\lamstr=\sE^*(c)$. 
\end{lemma}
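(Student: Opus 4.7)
My plan is to establish the two inequalities $\lamstr\le\sE^*(c)$ and $\sE^*(c)\le\lamstr$ separately. The first is an immediate consequence of \cref{L3.1}(c): since $\lambda_n\le \sE_{x,k}(c,Z)$ for every $Z\in\Uadm$, $k\in\cS$, and $x\in\sB_n$, passing to the infimum over $(x,k,Z)$ and then sending $n\to\infty$ (using $\lim_n\lambda_n=\lamstr$ from \cref{L3.3}(b)) yields $\lamstr\le \sE^*(c)$.

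For the reverse inequality $\sE^*(c)\le \lamstr$, I would exhibit a stationary Markov control under which the risk-sensitive criterion does not exceed $\lamstr$. Let $v^*$ be a measurable minimizing selector of the semi-linear eigenequation \eqref{EL3.3A}, so that $\cA^{v^*}\psi^*=\lamstr\psi^*$ in $\Rd$ (now a \emph{linear} cooperative weakly-coupled system). Applying the It\^o--Krylov formula to $\psi^*(X_t,S_t)\exp\bigl(\int_0^t(c-\lamstr)\,\D s\bigr)$ with the stopping time $T\wedge\uptau_n$ yields
\[
\psi^*_k(x) \,=\, \Exp^{v^*}_{x,k}\!\left[e^{\int_0^{T\wedge\uptau_n}(c(X_s,S_s,v^*)-\lamstr)\,\D s}\psi^*(X_{T\wedge\uptau_n},S_{T\wedge\uptau_n})\right],
\]
and then Fatou together with non-explosion of $(X,S)$ under \cref{A1.1} or \cref{A1.2} gives, after $n\to\infty$,
\[
\psi^*_k(x) \,\ge\, \Exp^{v^*}_{x,k}\!\left[e^{\int_0^{T}(c-\lamstr)\,\D s}\psi^*(X_{T},S_{T})\right].
\]

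The crux is to extract from this inequality an estimate $\Exp^{v^*}_{x,k}[e^{\int_0^T c\,\D s}] \le C(x,k)\,e^{\lamstr T}$, up to a $T$-independent multiplicative constant. I would split the expectation over $\{X_T\in\sB_r\}$ and its complement, choosing $r$ large enough that $\cK\subset\sB_r$ and $\sup_{\xi}c_k-\lamstr\le\theta\ell_k$ (respectively $\theta\gamma$) on $\sB_r^c$, for some $\theta\in(0,1)$, as in the proof of \cref{L3.3}. On $\{X_T\in\sB_r\}$, Harnack's inequality gives a uniform lower bound $c_1:=\min_{\sB_r\times\cS}\psi^*>0$, so the super-martingale inequality above yields
\[
\Exp^{v^*}_{x,k}\!\left[e^{\int_0^T c\,\D s}\Ind_{\{X_T\in\sB_r\}}\right] \,\le\, c_1^{-1}\psi^*_k(x)\,e^{\lamstr T}.
\]

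The main obstacle is the tail on $\{X_T\notin\sB_r\}$, where $\psi^*$ may decay to zero. Here I would combine the envelope $\psi^*\le\kappa\Lyap^\theta$ with $\theta\in(0,1)$ (obtained in the proof of \cref{L3.3}) with the Lyapunov supermartingale bound on $\Lyap$ from \cref{A1.1} or \cref{A1.2}, and mimic the H\"older-interpolation argument at the end of the proof of \cref{L3.3}. The strict inequality $\theta<1$ is exactly what provides the margin to absorb $c-\lamstr\le\theta\ell$ (or $\theta\gamma$) inside the Lyapunov drift, producing a tail estimate of the form $\kappa'\Lyap^\theta_k(x)\,e^{\lamstr T}$. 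Combining both pieces, taking logarithms, dividing by $T$, and sending $T\to\infty$ gives $\sE_{x,k}(c,v^*)\le\lamstr$, hence $\sE^*(c)\le\lamstr$, which completes the proof.
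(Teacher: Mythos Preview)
Your argument for $\lamstr\le\sE^*(c)$ is fine and matches the paper. The gap is in the reverse inequality, specifically in the tail piece on $\{X_T\notin\sB_r\}$. The ``H\"older interpolation'' you invoke from the proofs of \cref{L3.3} and \cref{L-new} is carried out there along paths that are \emph{constrained} to stay in $\sB_r^c$ until the relevant stopping time ($\uuptau_{r_0}$ or $\uptau_n$ on $\{\uptau_n<\uuptau_r\}$); this is exactly what makes the pointwise bound $c-\lamstr\le\theta\ell$ (or $\theta\gamma$) valid along the whole integral. In your setting the path over $[0,T]$ is unrestricted and may spend a positive fraction of time inside $\sB_r$, where $c-\lamstr$ need not be dominated by $\theta\ell$. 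Consequently, neither the Lyapunov supermartingale nor the envelope $\psi^*\le\kappa\Lyap^\theta$ yields a bound of the form $\Exp^{v^*}_{x,k}\bigl[e^{\int_0^T c\,\D s}\Ind_{\{X_T\notin\sB_r\}}\bigr]\le C(x,k)\,e^{\lamstr T}$; the occupation of $\sB_r$ contributes an extra exponential factor that does not vanish after taking $\tfrac{1}{T}\log$ and $T\to\infty$. Equivalently, the obstruction is that $\psi^*$ is \emph{not} known to be bounded below on $\Rd$ (and indeed need not be, since $c-\lamstr$ can be negative at infinity), so the supermartingale inequality cannot be converted into a bound on $\Exp^{v^*}[e^{\int_0^T c\,\D s}]$ directly.

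The paper circumvents this by a perturbation: it replaces $c$ by $\tilde c_m\ge c$ which agrees with $c$ on $\sB_m$ but satisfies $\inf_{\xi}\tilde c_m(\cdot,\xi)>\lamstr_m$ outside a compact set. This forces the corresponding eigenfunction $\psi^*_m$ to be bounded below by a positive constant (via the stochastic representation at the hitting time of that compact), so the standard $\tfrac{1}{T}\log$ argument gives $\lamstr_m=\sE^*(\tilde c_m)\ge\sE^*(c)$. One then shows $\lamstr_m\to\lamstr$ by passing to the limit in the eigenequation and invoking the uniqueness in \cref{T3.1}. If you want to salvage a direct argument, you would need an a priori positive lower bound $\inf_{\Rd\times\cS}\psi^*>0$, which is not available under \cref{A1.1} or \cref{A1.2} without the near-monotone structure used in \cref{T4.1}.
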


\begin{proof}
From \cref{L3.1} we have $\lamstr\le \sE^{*}(c)$.
In order to show the reverse inequality we perturb $c$ as follows:
\begin{itemize}
\item[•] Under \cref{A1.1}, for $k\in\cS$, $m\in\NN$ we define
\begin{equation*}
 \Tilde{c}_{m,k}(x,\xi) \,\df\, c_{k}(x) + \frac{1}{2}(\ell_{k}(x) - c_k(x,\xi))^+\Ind_{\{\sB_m^c\}}\quad \text{in} \,\, \Rd\times\Act.
\end{equation*}
\item[•] Under \cref{A1.2}, for $k\in\cS$, $m\in\NN$, we consider a sequence of smooth functions $\eta_{m}:\Rd\to [0,1]$ satisfying
$\eta_{m} = 1$ in $\sB_{m}$ and $\eta_{m} = 0$ in $\sB_{m+1}^{c}$, then define
\begin{equation*}
\Tilde{c}_{m,k}(x,\xi) \,\df\, c_{k}(x, \xi)\eta_{m}(x) + (1 - \eta_{m}(x))(\hat{\delta} + \max_{k\in\cS} \norm{c_{k}}_{\infty})\,,
\end{equation*}
where $\hat{\delta} < \gamma \,-\, \max_{k\in\cS} \norm{c_{k}}_{\infty}$.
\end{itemize} It is easy to see that $\Tilde{c}_m$ satisfies conditions of \cref{A1.1}, \cref{A1.2}. Thus, in view of \cref{L3.3}, it follows that there exist an eigenpair $(\psi^*_m, \lambda^*_m)$ satisfying \cref{EL3.3A} with $c$ replaced by $\Tilde{c}_m$\,, for each $m\in\NN$\,.

From our construction of perturbed costs $\Tilde{c}_m$, we have there exists a compact set $\sK$ containing the compact set $\cK$ (as in \cref{A1.1}, \cref{A1.2}) such that $\inf_{\xi\in\Act}\Tilde{c}_m(x,\xi) - \lambda^*_m \geq 0$ for all $x\in\sK^c$\,. For example, under \cref{A1.2} we can take $\sK = \bar{\sB}_{m+1}$ and under \cref{A1.1} since $\Tilde{c}_m$ is unbounded one can construct $\sK$ appropriately\,. Hence, by It\^{o}-Krylov formula it is straightforward to verify that
$\inf_{\Rd}\psi^*_{m,k}(x) \ge \inf_{\sK}\psi^*_{m,k}(x) > 0$, for all $k\in\cS$. 
Now, for any minimizing selector $v_{m}^{*}$ of $\cA \psi^*_{m} = \lambda^*_m \psi^*_{m}$, applying It\^{o}-Krylov formula and Fatou's lemma, we get
\begin{align*}
\psi^*_{m,k}(x)\,& \ge\, \Exp_{x,k}^{v_{m}^{*}}\left[e^{\int_0^T (\Tilde{c}_{m}(X_s,S_s,v_{m}^{*}(X_s,S_s))-\lambda^*_m) \,\D{s}}\,
\psi^*_m(X_T,S_T)\right]\\
&\,\ge\, \biggl(\min_{k}\inf_{\Rd}\,\psi^*_{m,k}\biggr)
\Exp_{x,k}^{v_{m}^{*}}\left[e^{\int_0^T (\Tilde{c}_{m}(X_s,S_s,v_{m}^{*}(X_s,S_s))-\lambda^*_m) \,\D{s}}\right] \,.
\end{align*}
Taking logarithm on both sides, dividing by $T$, and letting $T\to\infty$, we deduce that $\lambda^*_m\ge \sE_{x,k}^{v_{m}^{*}}(\Tilde{c}_m)$.
But we already have $\lambda^*_m\le \sE^{*}(\Tilde{c}_m)$ (see \cref{L3.1}(c)). Therefore, we obtain
\begin{equation*}
\lambda^*_m \,=\, \sE^{*}(\Tilde{c}_m) \,\ge\, 0\quad \forall \, m\in\NN\,.
\end{equation*} Let $\hat{\lambda} \,\df\, \lim_{m\to\infty} \lambda^*_m$. Since $\lambda^*_m\ge \sE_{x,k}^{v_{m}^{*}}(\Tilde{c}_m) \geq \sE_{x,k}^{v_{m}^{*}}(c)$, it is easy to see that $\hat{\lambda} \ge \sE^{*}(c)\ge \lamstr$\,.

Now in order to complete the proof, it remains to show that $\hat{\lambda} = \lamstr$. Arguing as in the proof of \cref{L-new} (see, \cref{Ln-01}) we can find a constant $r_{2} > 0$ such that 
\begin{equation}\label{EL3.5A}
\psi^*_{m,k}(x) \,\le\, \Exp_{x,k}^{v}\left[ e^{\int_0^{\uuptau_{r}} (\Tilde{c}_m(X_s,S_s,v(X_s,S_s)) -\lamstr_m) \D{s}}\,
\psi^*_m(X_{\uuptau_{r}}, S_{\uuptau_{r}})\right] \quad \forall\, x\in \sB_{r}^c,\,\, r\ge r_{2},
\end{equation}
for any $v\in\Usm$.
Using Harnack's inequality together with the Sobolev estimate it can be seen that $\{\psi^*_m\}$ is bounded
in $\Sobl^{2,p}(\Rd\times\cS)$. Thus, we can extract a 
subsequence converging to $\psi\in \Sobl^{2,p}(\Rd\times\cS)$
 satisfying
\begin{equation*}
(\cA \psi)_k(x) \,=\, \hat{\lambda}\psi_{k}(x)\quad\text{in}\,\,\Rd\quad\forall\,\, k\in\cS\,.
\end{equation*}
In view of the estimate as in \cref{EL3.3LyapBd} (and the one with $\ell$ replaced by $\gamma$), by dominated convergence theorem, letting $m\to \infty$ in \cref{EL3.5A}, we deduce that
\begin{equation}\label{EL3.5B}
\psi_{k}(x) \,\le\, \Exp_{x,k}^{v}\left[ e^{\int_0^{\uuptau_{r}} (c(X_s,S_s,v(X_s,S_s)) - \hat{\lambda}) \D{s}}\,
\psi(X_{\uuptau_{r}}, S_{\uuptau_{r}})\right] \quad \forall\, x\in \sB_{r}^c,\,\, r\ge r_{2}.
\end{equation}
We can now repeat the proof of
\cref{T3.1} to conclude that $\kappa\psi^* = \psi$. This intern implies $\hat\lambda=\lamstr$. This completes the proof of the lemma\,.
\end{proof}

We are now ready to complete the proof of \cref{T1.1}.

%%%%%%%%%%%%%%%%%%%%%%%%%%%%%%%%%%%%%%%%%%%%%%%%%%%%%%%%%%%%%%%%%%%%%%%%%%%%%%%%
\begin{proof}[Proof of \cref{T1.1}]
Part (a) follows from \cref{L3.3,L3.5} whereas part (c) follows from \cref{T3.1}. Note that the regularity of the
solution can be improved to $\cC^2$ using standard elliptic
regularity theory.

Now we turn to Part (b). Let $\tilde{v}\in\bUsm$. Then arguing as in \cref{L3.3}, there exist principal eigenpair 
$(\psi_{\tilde{v}}^* , \lamstr(c_{\tilde{v}})\in \Sobl^{2,p}(\Rd\times\cS)\cap\order(\Lyap)\times\RR$, $\psi_{\tilde{v}}^* \gg 0,$ satisfying
\begin{equation}\label{ET1.1AP}
(\cA^{\tilde{v}}\psi_{\tilde{v}}^*)_k(x) \,=\, \lamstr(c_{\tilde{v}})\psi_{\tilde{v},k}^*(x)\quad \text{in\ } \Rd\quad \forall \,\, k\in\cS.
\end{equation} Moreover, following the proof of \cref{L3.5} it is easy to see that $\lamstr(c_{\tilde{v}}) = \inf_{x\in\Rd, k\in\cS}\, \sE_{x,k}(c,\tilde{v})\,.$ Thus we obtain  $\lamstr = \sE^*(c) \le \inf_{x\in\Rd, k\in\cS}\, \sE_{x,k}(c,\tilde{v}) = \lamstr(c_{\tilde{v}})$. Since $\tilde{v}\in\bUsm$ and $\lamstr(c_{\tilde{v}})$ is the principal eigenvalue of \cref{ET1.1AP}, from \cref{EL3.3A} it follows that $\lamstr(c_{\tilde{v}}) \le \lamstr$. Therefore, we obtain $\lamstr = \inf_{x\in\Rd, k\in\cS}\, \sE_{x,k}(c,\tilde{v})$. This proves Part (b). 

Next we prove Part (d). Let $v\in\Usm^{*}$ and $(\psi_{v}^* , \lamstr(c_{v}))\in \Sobl^{2,p}(\Rd\times\cS)\cap\sorder(\Lyap)\times\RR$, $\psi_{v}^* \gg 0,$ be the principal eigenpair satisfying
\begin{equation}\label{ET1.1BP}
(\cA^{v}\psi_{v}^*)_k(x) \,=\, \lamstr(c_{v})\psi_{v,k}^*(x)\quad \text{in\ } \Rd\quad \forall \,\, k\in\cS.
\end{equation} Since $v$ is optimal we have $\lamstr(c_{v}) = \lamstr$. Thus
$$(\cA\psi_{v}^*)_k(x)\leq (\cA^{v}\psi_{v}^*)_k(x)
=\lamstr \psi_{v,k}^*(x).
$$
From \cref{T3.1}, it then follows that $\psi_{v}^* = \kappa \psi^*$. Therefore, from \cref{ET1.1A}, \cref{ET1.1BP} we conclude that $v\in \bUsm$. This completes the proof of the theorem. 
\end{proof}

%%%%%%%%%%%%%%%%%%%%%%%%%%%%%%%%%%%%%%%%%%%%%%%%%%%%%%%%%%%%%%%%%%%%%%%%%%%%%%%%
\section{Risk-sensitive control with Near-monotone cost criterion}\label{N-control}

In this section we consider the ergodic risk-sensitive control problem when the running cost satisfies a near-monotone type structural assumption. In addition to 
\hyperlink{A1}{{(A1)}}--\hyperlink{A4}{{(A4)}}, we impose following assumptions on the coefficients.

\begin{itemize}
\item[\hypertarget{B1}{{(B1)}}] The coefficients $a, b, c, m$ are globally bounded. That is,
$$\sup_{(x,\xi)\in\Rd\times\Act}\Bigl[\max_k\norm{a(x, k)}
+ \max_{k}\norm{b_k(x, \xi)} + |c(x, \xi)| + \max_{i, j}|m_{ij}(x, \xi)| \Bigr]\leq C.$$
Furthermore, 
\begin{equation*}
\sum_{i,j=1}^{d} a^{ij}(x,k)\zeta_{i}\zeta_{j}
\,\ge\,C^{-1} \abs{\zeta}^{2} \qquad\forall\, k\in\cS, x\in\Rd\,,
\end{equation*}
\item[\hypertarget{B2}{{(B2)}}] There exists $\varrho>0$ such that 
$$\min_{\xi\in\Act}m_{i, j}(x, \xi)>\varrho\quad \text{for all}\; x\in\Rd,\; \text{and}\; i\neq j.$$
\item[\hypertarget{B3}{{(B3)}}] The drift term $b$ satisfies 
\begin{equation*}
\max_{\xi\in \Act}\frac{\langle b(x, \xi), x\rangle^+}{|x|} \longrightarrow 0\quad \text{as}\quad |x|\to\infty\,.
\end{equation*}
\end{itemize}

Using the above hypothesis we obtain the following bound on the growth of eigenfunctions.
\begin{lemma}\label{L4.1}
Suppose that \hyperlink{B1}{(B1)}-\hyperlink{B2}{(B2)} hold. Let $\psi\in\Sobl^{2,p}(\Rd\times\cS), p>d$, be positive and  satisfy $(\cA\psi)_k(x)=\lambda\psi_k(x)$ for all 
$k\in\cS$ for some $\lambda\in\RR$. Then there exists
a positive constant $\hat\kappa$ such that
\begin{equation}\label{EL4.1A}
\psi_k(x)\leq \psi_k(0) e^{\hat\kappa |x|}\quad \text{for all}\; x\in\Rd, \; \text{and}\; k\in\cS.
\end{equation}
\end{lemma}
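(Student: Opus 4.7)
The plan is to reduce the nonlinear eigen-equation to a linear weakly coupled cooperative system by a measurable selection, apply a local Harnack estimate uniform in the base point, and then iterate along a chain of overlapping balls connecting $0$ to $x$.

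\emph{Step 1 (Linearization).} Since $\Act$ is compact and the integrand in \eqref{EcA} is continuous in $\xi$, standard measurable selection (\cite[Theorem~18.17]{AliBor}) yields $v^{*}\in\Usm$ such that $(\cA^{v^{*}}\psi)_k(x)=\lambda\psi_k(x)$ for every $k\in\cS$ and a.e.\ $x\in\Rd$. Writing out this equation,
\begin{equation*}
\trace\bigl(a_k(x)\grad^2\psi_k(x)\bigr)+b_k(x,v^{*})\cdot\grad\psi_k(x)+\bigl(c_k(x,v^{*})+m_{kk}(x,v^{*})-\lambda\bigr)\psi_k(x)+\sum_{j\ne k}m_{kj}(x,v^{*})\psi_j(x)=0.
\end{equation*}
By \hyperlink{B1}{(B1)} the operator is uniformly elliptic with bounded coefficients, while \hyperlink{B2}{(B2)} gives $m_{kj}(x,v^{*})\geq\varrho>0$ for $j\ne k$, so the system is strongly cooperative with coupling bounded below.

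\emph{Step 2 (Local Harnack uniform in the base point).} For such weakly coupled cooperative uniformly elliptic systems with $L^{\infty}$ coefficients, a Harnack-type inequality holds: there exists $C_{H}=C_{H}(d,\Lambda,C,\varrho,|\lambda|)\geq 1$, independent of $y\in\Rd$, such that every positive strong solution satisfies
\begin{equation*}
\max_{k\in\cS}\sup_{B(y,1/2)}\psi_k\,\leq\, C_{H}\,\min_{k\in\cS}\inf_{B(y,1/2)}\psi_k\,.
\end{equation*}
This follows, for instance, from the scalar Harnack inequality for $L^p$-viscosity / strong solutions (applied to each $\psi_k$, treating the coupling term $\sum_{j\ne k}m_{kj}\psi_j$ as a non-negative zeroth order source) together with the positivity \hyperlink{B2}{(B2)}, which lets us bootstrap pointwise lower bounds across components, exactly as in \cite[Appendix]{ABP-21}.

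\emph{Step 3 (Chain of balls).} Fix $x\in\Rd$ and set $R=|x|$. Choose points $y_{0}=0,y_{1},\dotsc,y_{N}=x$ on the segment joining $0$ and $x$ with $|y_{i+1}-y_{i}|\leq 1/4$ and $N\leq 4R+1$, so that $y_{i},y_{i+1}\in B(y_{i},1/2)$. Applying the Harnack estimate of Step~2 successively on $B(y_{i},1/2)$ yields
\begin{equation*}
\psi_k(x)\,\leq\,\max_{j\in\cS}\psi_j(y_N)\,\leq\,C_H\min_{j\in\cS}\psi_j(y_{N-1})\,\leq\,\cdots\,\leq\,C_H^{N}\min_{j\in\cS}\psi_j(0)\,\leq\,C_H^{N}\psi_k(0)\,.
\end{equation*}
Setting $\hat\kappa:=5\log C_H$ absorbs the additive constant and gives $\psi_k(x)\leq\psi_k(0)\,e^{\hat\kappa|x|}$, as claimed.

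The main technical point is Step~2, namely having a Harnack constant that is genuinely independent of the base point $y$; this is where boundedness of the coefficients in \hyperlink{B1}{(B1)} and the uniform positive coupling in \hyperlink{B2}{(B2)} are essential. Once this is in hand, the chaining argument in Step~3 is routine.
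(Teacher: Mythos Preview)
Your argument is correct and shares the same core idea as the paper's proof: linearize via a measurable selector, then exploit that under \hyperlink{B1}{(B1)}--\hyperlink{B2}{(B2)} the Harnack constant for the resulting cooperative system is translation-invariant, so local control can be propagated from $0$ to any $x$.

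The only substantive difference is in how this propagation is carried out. You iterate the Harnack inequality along a chain of overlapping balls, obtaining $\psi_k(x)\le C_H^{N}\psi_k(0)$ with $N\sim |x|$, which directly yields the exponential bound. The paper instead combines the translation-invariant Harnack inequality with the interior $\Sob^{2,p}$ estimate to deduce a uniform bound $\sup_{x,k}\,|\grad\psi_k(x)|/\psi_k(x)\le\hat\kappa$, and then integrates $\grad\log\psi_k$ along the segment from $0$ to $x$. Your chaining route is slightly more elementary in that it avoids the Sobolev/gradient step; the paper's route has the advantage of producing the stronger intermediate statement that $\log\psi_k$ is globally Lipschitz, which is itself sometimes useful. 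Either way, the essential input---a Harnack constant independent of the base point, coming from the global bounds in \hyperlink{B1}{(B1)} and the uniform coupling in \hyperlink{B2}{(B2)}---is identical.
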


\begin{proof}
Let $v^*$ be a minimizing selector of $(\cA\psi)_k(x)=\lambda\psi_k(x)$, that is ,
$(\cA^{v^*}\psi)_k(x)=\lambda\psi_k(x)$. 
For any point $x_0\in \Rd$, we can translate the coordinate 
by defining $\tilde{f}(x)=f(x+x_0)$ for $f=a_k, b_k, v^*_k, c_k, m_{ij}, \psi_k$, and the new coefficients will also satisfy \hyperlink{B1}{(B1)}-\hyperlink{B2}{(B2)}. So we consider the equation $(\cA^{v^*}\psi)_k(x)=\lambda\psi_k(x)$
in the ball $\sB_2(0)$. Applying the Harnack's inequality
\cite[Theorem~2]{Sirakov}, we find a constant $\kappa_1$ so that
\begin{equation}\label{EL4.1B}
\sup_{\sB_1(0)}\, \max_{k\in\in \cS} \psi_k(x)
\leq \kappa_1\, \inf_{\sB_1(0)}\, \min_{k\in\in \cS} \psi_k(x),
\end{equation}
where the constant $\kappa_1$ depends on the constants in
\hyperlink{B1}{(B1)}-\hyperlink{B2}{(B2)} and $\lambda$.
Therefore, an application of Sobolev estimate \cite[Theorem~9.11]{GilTru} and \eqref{EL4.1B} gives
$$\sup_{\sB_{\frac{1}{2}}(0)}|\grad \psi_k(x)|
\leq \kappa_2 \inf_{\sB_1(0)}\, \min_{k\in\in \cS} \psi_k(x)
\leq \kappa_2 \min_{k\in\in \cS} \psi_k(0),
$$
where the constant $\kappa_2$ depends on $C, \varrho$.
Since $x_0$ is arbitrary, we obtain
$$\sup_{x\in\Rd}\, \max_{k\in\cS} \, \frac{|\grad\psi_k(x)|}{\psi_k(x)}\leq \hat\kappa.$$
This gives us \eqref{EL4.1A}, completing the proof.
\end{proof}
Using \hyperlink{B3}{(B3)}, in the next lemma, we show that $\Exp_{x,k}^{Z}\left[|X_t|\right] \in \sorder{(t)}$ for any $Z\in \Uadm$\,. The proof of the following lemma follows from \cite[Lemma~3.2]{AB18}\,.
\begin{lemma}\label{L4.2}
Suppose that \hyperlink{B1}{(B1)},\hyperlink{B3}{(B3)} hold. Then
\begin{equation}\label{EL4.2A}
\limsup_{t\to\infty}\frac{1}{t}\Exp_{x,k}^{Z}\left[|X_t|\right] \,=\, 0\, \quad \forall\,\,\, Z\in \Uadm\,. 
\end{equation}
\end{lemma}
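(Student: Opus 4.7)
The plan is to apply the It\^o--Krylov formula to $V(x) = |x|^2$, exploit the sublinear radial drift condition \hyperlink{B3}{(B3)}, and close the argument with a Cauchy--Schwarz bootstrap of Gronwall type. Since $V$ depends only on the continuous component $X$ and the jumps of $(X_t, S_t)$ affect only the $S$-coordinate, the switching structure does not intervene in the It\^o computation, so the argument of \cite[Lemma~3.2]{AB18} essentially transcribes.

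First, localizing with $\uptau_n = \uptau(\sB_n)$ and passing to the limit using the boundedness of $b$ and $\sigma$ from \hyperlink{B1}{(B1)} (which, via a direct Gronwall estimate on $\Exp^Z_{x,k}|X_{t\wedge\uptau_n}|^2$, provides the uniform-in-$n$ second-moment bound needed to justify the limit), I obtain
\begin{equation*}
\Exp^Z_{x,k}[|X_t|^2] \,=\, |x|^2 + \Exp^Z_{x,k}\!\left[\int_0^t \bigl(2\langle X_s, b(X_s, S_s, Z_s)\rangle + 2\trace a(X_s, S_s)\bigr)\,\D s\right].
\end{equation*}
By \hyperlink{B1}{(B1)}, $\trace a$ is bounded, and by \hyperlink{B3}{(B3)}, for every $\varepsilon > 0$ there is a constant $C_\varepsilon$ with $\max_{\xi\in\Act}\langle b(x,\xi), x\rangle^+ \,\leq\, \varepsilon|x| + C_\varepsilon$ for all $x\in\Rd$. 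Writing $\phi(t)\df\Exp^Z_{x,k}|X_t|^2$ and $\psi(t)\df\Exp^Z_{x,k}|X_t|$, this yields the key integral inequality
\begin{equation*}
\phi(t) \,\leq\, |x|^2 + C'_\varepsilon \, t + 2\varepsilon\int_0^t \psi(s)\,\D s,
\end{equation*}
to be combined with the Cauchy--Schwarz bound $\psi(s) \leq \sqrt{\phi(s)}$.

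The final step is a self-improving bootstrap. The boundedness of $b$ and $\sigma$ already delivers the coarse linear-growth estimate $\psi(t) \leq |x| + Ct + C''\sqrt{t}$ (via Burkholder--Davis--Gundy on the martingale part of \eqref{E1.1}), so $L\df\limsup_{t\to\infty}\psi(t)/t$ is finite. For any $\delta > 0$ and $T_0$ with $\psi(s)\leq (L+\delta)s$ for $s\geq T_0$, one has $\int_0^t\psi(s)\,\D s \leq K_0 + (L+\delta)t^2/2$. Plugging this into the key inequality and taking square roots gives $\psi(t) \,\leq\, \sqrt{\varepsilon(L+\delta)}\,t + \sorder(t)$, whence $L \leq \sqrt{\varepsilon L}$ for every $\varepsilon > 0$, forcing $L = 0$. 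The main technical pitfall is the legitimacy of the limit $n\to\infty$ in the localized It\^o identity, which requires the aforementioned uniform-in-$n$ second-moment control; once this is in place, everything else is a routine combination of \hyperlink{B3}{(B3)} and Cauchy--Schwarz.
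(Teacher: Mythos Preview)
Your proof is correct and follows essentially the same approach as the paper: apply the It\^o--Krylov formula to $|x|^2$, observe that the switching term drops out because $f_k(x)=|x|^2$ is independent of $k$ (so $\sum_j m_{k,j}f_j=0$), and then run the $\varepsilon$-bootstrap that the paper simply defers to \cite[Lemma~3.2]{AB18}. You have just made explicit the self-improving step that the paper leaves to the citation.
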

\begin{proof}
Let $f_{k}(x) = |x|^2$ for all $k\in \cS$ and $x\in \Rd$\,. Thus, by It\^{o}-Krylov formula we deduce that
\begin{align*}
\Exp_{x,k}^{Z}\left[|X_t|^2\right] & \leq |x|^2 + \int_{0}^{t}\left[\langle b(X_s, S_s, Z_s), X_s\rangle^+ + \trace\bigl(a(X_s, S_s)\bigr) + \sum_{j=1}^{N} m_{S_{s^-},j}(X_s,Z_s)|X_s|^2 \right]\D s\\
& = |x|^2 + \int_{0}^{t}\left[ \langle b(X_s, S_s, Z_s), X_s\rangle^+ + \trace\bigl(a(X_s, S_s)\bigr) \right]\D s\,.
\end{align*} Now, closely following the steps as in \cite[Lemma~3.2]{AB18}, we obtain our result\,.
\end{proof}
Recall that a continuous function $f:\Rd\times \Act \to \RR$ is said to be near-monotone with respect to $\lambda \in \RR$, if there exists $\epsilon > 0$ such that the set
$$\{x\in \Rd \mid \min_{\xi\in \Act} f(x,\xi)\leq \lambda + \epsilon \}$$
is either compact or empty\,. In the next theorem, we show that under a near-monotone type structural assumption on the running cost function $c$, ergodic optimal control exists in the space of stationary Markov policies\,.
\begin{theorem}\label{T4.1}
Suppose that \hyperlink{A1}{(A1)}, \hyperlink{A3}{(A3)} and
\hyperlink{B1}{(B1)}-\hyperlink{B3}{(B3)} hold. Also, assume that the running cost function $c$ is near-monotone with respect to $\sE^*(c)$ (see \eqref{Eoptval}). Then there exists $(\psi^*, \lamstr) \in \cC^2(\Rd\times\cS)\times\RR$, $\psi^* \gg 0,$ satisfying
\begin{equation}\label{ET4.1A}
(\cA\psi^*)_k(x) \,=\, \lamstr\psi_{k}^*(x)\quad \text{in\ } \Rd\quad \forall \,\, k\in\cS\,.
\end{equation}
Moreover, the following hold:
\begin{itemize}
\item[(a)] $\lamstr = \sE^*(c)$\,.
\item[(b)] Any $v\in \Usm$ that satisfies
\begin{equation}\label{ET4.1B}
(\cA\psi^*)_k(x) = (\cA^v\psi^*)_k(x)\quad \text{a.e.\ } x\in\Rd\quad \forall \,\, k\in\cS\,,
\end{equation} is stable, and is optimal, that is, $\sE_{x,k}(c, v) = \sE^*(c)$ for all $x\in \Rd$ and $k\in \cS$\,.
\end{itemize}
\end{theorem}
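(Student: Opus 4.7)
The plan is to obtain the eigenpair $(\psi^*,\lamstr)$ as a limit of the Dirichlet eigenpairs $(\psi_n,\lambda_n)$ from \cref{L3.1}, to identify $\lamstr$ with $\sE^*(c)$, and then to verify the optimality and stability of any measurable selector of \eqref{ET4.1B}. Because the stability hypotheses \cref{A1.1}/\cref{A1.2} are not available, the role previously played by the Lyapunov function will be taken over by the near-monotonicity of $c$, the exponential upper bound of \cref{L4.1}, and the sublinear growth $\Exp^{Z}_{x,k}[|X_t|]=\sorder(t)$ of \cref{L4.2}.

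First, by \cref{L3.1} the sequence $\lambda_n$ is increasing with $\lambda_n\le\sE^*(c)$, hence $\lambda_n\uparrow\tilde\lambda\le\sE^*(c)$. I would normalize $\min_{k\in\cS}\psi_{n,k}(0)=1$. The global bounds in \hyperlink{B1}{(B1)}--\hyperlink{B2}{(B2)} furnish Harnack estimates \cite[Theorem~2]{Sirakov} with constants independent of $n$, and together with interior $\Sob^{2,p}$ estimates yield a local uniform bound on $\{\psi_n\}$ (bounded both above and away from $0$). A diagonal subsequence thus converges in $\cC^{1,\alpha}_{\mathrm{loc}}(\Rd\times\cS)$ and weakly in $\Sobl^{2,p}(\Rd\times\cS)$ to a limit $\psi^*\gg 0$ satisfying $(\cA\psi^*)_k=\tilde\lambda\psi_k^*$ in $\Rd$; elliptic regularity upgrades $\psi^*$ to $\cC^2$. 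Applying \cref{L4.1} to $\psi^*$ yields the pointwise bound $\psi^*_k(x)\le\psi_k^*(0)\,e^{\hat\kappa|x|}$.

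The key step is to identify $\tilde\lambda$ with $\sE^*(c)$ and show that any selector $v^*$ of \eqref{ET4.1B} achieves this value. We already have $\tilde\lambda\le\sE^*(c)$. For the reverse, applying It\^o--Krylov to $e^{\int_0^t(c(X_s,S_s,v^*)-\tilde\lambda)\,\D s}\psi^*(X_t,S_t)$ and Fatou's lemma gives
\[
\psi^*_k(x)\,\ge\,\Exp_{x,k}^{v^*}\!\Bigl[e^{\int_0^{T}(c(X_s,S_s,v^*)-\tilde\lambda)\,\D s}\,\psi^*(X_T,S_T)\Bigr]\,.
\]
Taking logarithms, applying Jensen's inequality together with $\log\psi^*(X_T,S_T)\le\log\psi_k^*(0)+\hat\kappa|X_T|$ from \cref{L4.1}, then dividing by $T$ and invoking \cref{L4.2}, yields
\[
\limsup_{T\to\infty}\frac{1}{T}\Exp_{x,k}^{v^*}\!\Bigl[\int_0^{T}c(X_s,S_s,v^*)\,\D s\Bigr]\,\le\,\tilde\lambda\,.
\]
The near-monotonicity of $c$ (with respect to $\sE^*(c)\geq\tilde\lambda$) then forces the occupation measures of $(X,S)$ under $v^*$ to concentrate on the compact set $\{\min_\xi c\le\sE^*(c)+\varepsilon\}$, which produces an invariant probability measure and yields positive recurrence (hence stability) of the controlled process under $v^*$. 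With stability secured, a stochastic-representation argument in the spirit of \cref{L-new}, using the hitting time $\uuptau_r$ of $\sB_r$ for $r$ so large that $c-\tilde\lambda\ge\varepsilon$ outside $\sB_r$ and the positive lower bound for $\psi^*$ on $\sB_r$ furnished by Harnack, upgrades the Cesaro average bound to $\sE_{x,k}(c,v^*)\le\tilde\lambda$. Combined with $\lamstr\le\sE^*(c)\le\sE_{x,k}(c,v^*)$, this identifies $\lamstr=\tilde\lambda=\sE^*(c)=\sE_{x,k}(c,v^*)$, proving (a) and (b).

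The main obstacle is this last upgrade from the Cesaro average bound on $T^{-1}\Exp[\int_0^T c\,\D s]$ to the Cesaro exponential bound $T^{-1}\log\Exp[e^{\int_0^T c\,\D s}]$, since Jensen's inequality points the wrong way. This is exactly where the near-monotone hypothesis must be exploited twice: once to extract recurrence from the Cesaro-average bound, and again through the tail estimate $c-\tilde\lambda\ge\varepsilon$ outside a compact set, which makes the exponential integrals in the Feynman--Kac representation tight enough to close the loop.
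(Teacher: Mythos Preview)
Your proposal is correct and follows essentially the same route as the paper. The only refinement worth noting is that the final ``upgrade'' is more direct than an analogue of \cref{L-new}: the hitting-time inequality $\psi^*_k(x)\ge(\min_{k}\min_{\sB_r}\psi_k^*)\,\Exp^{v^*}_{x,k}\bigl[e^{\varepsilon\uuptau_r}\Ind_{\{\uuptau_r<\infty\}}\bigr]$ combined with stability gives $\inf_{\Rd\times\cS}\psi^*>0$, and plugging this global lower bound back into the It\^o--Krylov estimate $\psi^*_k(x)\ge\Exp^{v^*}_{x,k}\bigl[e^{\int_0^T(c-\lamstr)\,\D s}\psi^*(X_T,S_T)\bigr]$ yields $\sE_{x,k}(c,v^*)\le\lamstr$ immediately.
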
 
\begin{proof}
From the proof of \cref{L3.3}, we have there exists $(\psi^*, \lamstr) \in \Sobl^{2,p}(\Rd\times\cS)\times\RR$,\, $p>d$,\, $\psi^* \gg 0,$ satisfying \eqref{ET4.1A}\,. Also, \cref{L3.1}(c) gives us $\lamstr \leq \sE^*(c)$\,. 

Now, let $v\in\Usm$ be a minimizing selector of \eqref{ET4.1A}. Thus, by It\^{o}-Krylov formula and Fatou's lemma, we deduce that for all $T\ge 0$
\begin{align}\label{ET4.1CA}
\psi^*_{k}(x)\, \ge\, \Exp_{x,k}^{v}\left[e^{\int_0^T (c(X_s,S_s,v(X_s,S_s))-\lamstr) \,\D{s}}\, \psi^*(X_T,S_T)\right]\,.
\end{align}
Taking logarithm on both sides of the above inequality, dividing by $T$, and applying Jensen's inequality, it follows that
\begin{align}\label{ET4.1C}
\frac{1}{T}\log\psi^*_{k}(x) + \lamstr \,\ge\, \frac{1}{T}\Exp_{x,k}^{v}\left[\int_0^T c(X_s,S_s,v(X_s,S_s)) \,\D{s}\right]\,+\, \frac{1}{T}\Exp_{x,k}^{v}\left[\log\psi^*(X_T,S_T)\right]\,.
\end{align} From \cref{L4.1}, it is easy to see that $\log\psi_k^*(x) \leq \hat{\kappa}(1 + |x|)$ for all $k\in\cS$\,. Hence, in view of \cref{L4.2}, we obtain $$\limsup_{T\to\infty}\frac{1}{T}\Exp_{x,k}^{v}\left[\log\psi^*(X_T,S_T)\right] = 0\,.$$
Thus, letting $T\to\infty$ in \cref{ET4.1C}, we get
\begin{equation}\label{ET4.1D}
\lamstr \,\ge\, \limsup_{T\to\infty}\frac{1}{T}\Exp_{x,k}^{v}\left[\int_0^T c(X_s,S_s,v(X_s,S_s)) \,\D{s}\right]\,.
\end{equation} Since $\lamstr \leq \sE^*(c)$ and $c$ is near-monotone with respect to $\sE^*(c)$, by the similar argument as in \cite[Lemma~2.1]{AB18}, we conclude that $v$ is stable\,. 

Again, since $c$ is near-monotone with respect to $\sE^*(c)$, one can find a non-empty compact set $\sD$ and a positive constant $\Tilde{\delta}$ such that $\min_{\xi\in\Act}c(x, \xi) - \sE^*(c) > \Tilde{\delta}$ in $\sD^c$\,. Thus, by  It\^{o}-Krylov formula and Fatou's lemma, we obtain
\begin{align}\label{ET4.1E}
\psi^*_{k}(x)\, &\ge\, \Exp_{x,k}^{v}\left[e^{\Tilde{\delta}\uuptau}\, \psi^*(X_{\uuptau},S_{\uuptau})\Ind_{\{\uuptau < \infty\}}\right]\nonumber\\ 
&\ge\, \left(\min_{k\in\cS}\min_{\sD}\psi_k^*\right)\Exp_{x,k}^{v}\left[e^{\Tilde{\delta}\uuptau}\Ind_{\{\uuptau < \infty\}}\right]\quad \forall\,\,\, x\in \sD^c\,,
\end{align}
where $\uuptau \df \uptau(\sD^{c})$\,. By the Harnack's inequality
\cite[Theorem~2]{Sirakov}, we have $\min_{k\in\cS}\min_{\sD}\psi_k^* > 0$. Hence, \cref{ET4.1E} implies that $\min_{k\in\cS}\inf_{\Rd}\psi_k^* > 0$\,. Therefore, from \cref{ET4.1CA}, we obtain 
\begin{align}\label{ET4.1F}
\psi^*_{k}(x)\, \ge\,\left(\min_{k\in\cS}\inf_{\Rd}\psi_k^*\right) \Exp_{x,k}^{v}\left[e^{\int_0^T (c(X_s,S_s,v(X_s,S_s))-  \lamstr) \,\D{s}}\right]\,.
\end{align} Now, taking logarithm on both sides, dividing by $T$ and letting $T\to\infty$, we deduce that
\begin{equation*}
\lamstr \geq \sE_{x,k}(c, v) \geq \sE^*(c)\,.
\end{equation*} Since $\lamstr \leq \sE^*(c)$, it follows that $\lamstr = \sE_{x,k}(c, v) = \sE^*(c)$\,. This completes the proof of the theorem\,.
\end{proof}
%%%%%%%%%%%%%%%%%%%%%%%%%%%%%%%%%%%%%%%%%%%%%%%%%%%%%%%%%%%%%%%%%%%%%%%%%%%%%%%%%
\subsection*{Acknowledgement}
This research of Anup Biswas was supported in part by a SwarnaJayanti fellowship DST/SJF/MSA-01/2019-20.

%%%%%%%%%%%%%%%%%%%%%%%%%%%%%%%%%%%%%%%%%%%%%%%%%%%%%%%%%%%%%%%%%%%%%%%%%%%%%%%%
\bibliographystyle{abbrv}
\bibliography{Risk-Switching}

@book {AliBor,
    AUTHOR = {Aliprantis, Charalambos D. and Border, Kim C.},
     TITLE = {Infinite dimensional analysis},
   EDITION = {Third},
      NOTE = {A hitchhiker's guide},
 PUBLISHER = {Springer, Berlin},
      YEAR = {2006},
     PAGES = {xxii+703},
      ISBN = {978-3-540-32696-0; 3-540-32696-0},
   MRCLASS = {46-01 (28D05 46N10 47-01 54-01 60B10 60J05)},
  MRNUMBER = {2378491},
}

@article{ABBK-19,
    Author = {Arapostathis, A. and Biswas, A. and Borkar, V. S. and Kumar, K. Suresh},
     Title = {A variational characterization of the risk-sensitive average reward for controlled diffusions in $\mathbb{R}^d$},
   Journal = {ArXiv e-prints},
    Volume = {1903.08346},
      Year = {2019},
    eprint = {https://arxiv.org/abs/1903.08346}}

@article{ABP-21,
	Author={Arapostathis, A. and Biswas, A. and Pradhan, S.},
	Title={On the monotonicity property of the generalized eigenvalue for weakly-coupled cooperative elliptic systems},
	Journal = {ArXiv e-prints},
    Volume = {2012.15481},
      Year = {2021},
    eprint = {https://arxiv.org/abs/1903.08346}
}

@article {AGM93,
    AUTHOR = {Arapostathis, Ari and Ghosh, Mrinal K. and Marcus, Steven I.},
     TITLE = {Optimal Control of Switching Diffusions with Application to Flexible Manufacturing Systems},
   JOURNAL = {SIAM J. Control Optim.},
  FJOURNAL = {SIAM Journal on Control and Optimization},
    VOLUME = {31},
      YEAR = {1993},
    NUMBER = {5},
     PAGES = {1183--1204},
   MRCLASS = {93E20 (49L20 60J60 60J75)},
       DOI = {https://doi.org/10.1137/0331056},
       URL = {https://epubs.siam.org/doi/10.1137/0331056}}

@article {A18,
    AUTHOR = {Arapostathis, Ari},
     TITLE = {A counterexample to a nonlinear version of the
              {K}re\u{\i}n-{R}utman theorem by {R}. {M}ahadevan},
   JOURNAL = {Nonlinear Anal.},
  FJOURNAL = {Nonlinear Analysis. Theory, Methods \& Applications. An
              International Multidisciplinary Journal},
    VOLUME = {171},
      YEAR = {2018},
     PAGES = {170--176},
  MRNUMBER = {3778277},
       DOI = {10.1016/j.na.2018.02.006},
       URL = {https://doi-org.ezproxy.lib.utexas.edu/10.1016/j.na.2018.02.006}}

@article {AB18,
    AUTHOR = {Arapostathis, Ari and Biswas, Anup},
     TITLE = {Infinite horizon risk-sensitive control of diffusions without
              any blanket stability assumptions},
   JOURNAL = {Stochastic Process. Appl.},
  FJOURNAL = {Stochastic Processes and their Applications},
    VOLUME = {128},
      YEAR = {2018},
    NUMBER = {5},
     PAGES = {1485--1524},
      ISSN = {0304-4149},
   MRCLASS = {35R60 (60J60 93E20)},
  MRNUMBER = {3780687},
       DOI = {10.1016/j.spa.2017.08.001},
       URL = {https://doi.org/10.1016/j.spa.2017.08.001}}

@article {AB-19,
Author = {Arapostathis, A. and Biswas, A.},
Title = {Risk-sensitive control for a class of diffusions with jumps},
Journal = {ArXiv e-prints},
Volume = {1910.05004},
Year = {2019},
eprint = {https://arxiv.org/abs/1910.05004}}

@article {ABS19,
    AUTHOR = {Arapostathis, Ari and Biswas, Anup and Saha, Subhamay},
     TITLE = {Strict monotonicity of principal eigenvalues of elliptic
              operators in {$\mathbb{R}^d$} and risk-sensitive control},
   JOURNAL = {J. Math. Pures Appl. (9)},
  FJOURNAL = {Journal de Math\'{e}matiques Pures et Appliqu\'{e}es. Neuvi\`eme S\'{e}rie},
    VOLUME = {124},
      YEAR = {2019},
     PAGES = {169--219},
   MRCLASS = {35P15 (35B40 35Q93 60J60 93E20)},
  MRNUMBER = {3926044},
       DOI = {10.1016/j.matpur.2018.05.008}}

@article {AB18a,
    AUTHOR = {Arapostathis, Ari and Biswas, Anup},
     TITLE = {A variational formula for risk-sensitive control of diffusions in
     {$\mathbb{R}^d$}},
   JOURNAL = {SIAM J. Control Optim.},
  FJOURNAL = {SIAM Journal on Control and Optimization},
    YEAR = {to appear},
       eprint = {https://arxiv.org/abs/1810.01180}}

@article {BDY09,
    AUTHOR = {Bercu, Bernard and Dufour, Fran\c{c}ois and Yin, G. George},
     TITLE = {Almost sure stabilization for feedback controls of
              regime-switching linear systems with a hidden {M}arkov chain},
   JOURNAL = {IEEE Trans. Automat. Control},
  FJOURNAL = {Institute of Electrical and Electronics Engineers.
              Transactions on Automatic Control},
    VOLUME = {54},
      YEAR = {2009},
    NUMBER = {9},
     PAGES = {2114--2125},
      ISSN = {0018-9286},
   MRCLASS = {93D21 (60G35 60J20 93E15)},
  MRNUMBER = {2567938},
MRREVIEWER = {T. E. Govindan},
       DOI = {10.1109/TAC.2009.2026938}}

@book{GilTru,
	AUTHOR = {Gilbarg, David and Trudinger, Neil S.},
     TITLE = {Elliptic partial differential equations of second order},
    SERIES = {Grundlehren der Mathematischen Wissenschaften},
    VOLUME = {224},
   EDITION = {Second},
 PUBLISHER = {Springer-Verlag, Berlin},
      YEAR = {1983},
  MRNUMBER = {737190},
       DOI = {10.1007/978-3-642-61798-0}}

@book {GS72,
    AUTHOR = {G\={i}hman, \u{I}. \={I}. and Skorohod, A. V.},
     TITLE = {Stochastic differential equations},
      NOTE = {%Translated from the Russian by Kenneth Wickwire,
              Ergebnisse der Mathematik und ihrer Grenzgebiete, Band 72},
 PUBLISHER = {Springer-Verlag, New York-Heidelberg},
      YEAR = {1972},
   MRCLASS = {60H10 (34F05)},
  MRNUMBER = {0346904}}

@article {Berestycki-15,
    AUTHOR = {Berestycki, Henri and Rossi, Luca},
     TITLE = {Generalizations and properties of the principal eigenvalue of
              elliptic operators in unbounded domains},
   JOURNAL = {Comm. Pure Appl. Math.},
  FJOURNAL = {Communications on Pure and Applied Mathematics},
    VOLUME = {68},
      YEAR = {2015},
    NUMBER = {6},
     PAGES = {1014--1065},
   MRCLASS = {35J25 (35B50 35P05)},
  MRNUMBER = {3340379},
MRREVIEWER = {Sergey G. Pyatkov},
       DOI = {10.1002/cpa.21536},
       URL = {https://doi-org.ezproxy.lib.utexas.edu/10.1002/cpa.21536}}

@article {Biswas-11a,
    AUTHOR = {Biswas, Anup},
     TITLE = {An eigenvalue approach to the risk sensitive control problem
              in near monotone case},
   JOURNAL = {Systems Control Lett.},
  FJOURNAL = {Systems \& Control Letters},
    VOLUME = {60},
      YEAR = {2011},
    NUMBER = {3},
     PAGES = {181--184},
   MRCLASS = {93E20 (49L20 60J60 93B60 93E15)},
  MRNUMBER = {2808061},
MRREVIEWER = {Annalisa Cesaroni},
       DOI = {10.1016/j.sysconle.2010.12.002},
       URL = {https://doi-org.ezproxy.lib.utexas.edu/10.1016/j.sysconle.2010.12.002}}

@article {Biswas-11,
    AUTHOR = {Biswas, Anup},
     TITLE = {Risk sensitive control of diffusions with small running cost},
   JOURNAL = {Appl. Math. Optim.},
  FJOURNAL = {Applied Mathematics and Optimization},
    VOLUME = {64},
      YEAR = {2011},
    NUMBER = {1},
     PAGES = {1--12},
   MRCLASS = {93E20 (49J55 49L20 60J60 90C15)},
  MRNUMBER = {2796095},
MRREVIEWER = {Huayue Zhang},
       DOI = {10.1007/s00245-010-9127-4},
       URL = {https://doi-org.ezproxy.lib.utexas.edu/10.1007/s00245-010-9127-4}}

@article {DKR94,
    AUTHOR = {Di Mazi, G. B. and Kabanov, Yu. M. and Runggal\cprime der, V. I.},
     TITLE = {Mean-square hedging of options on a stock with {M}arkov
              volatilities},
   JOURNAL = {Teor. Veroyatnost. i Primenen.},
  FJOURNAL = {Rossi\u{\i}skaya Akademiya Nauk. Teoriya Veroyatnoste\u{\i} i ee
              Primeneniya},
    VOLUME = {39},
      YEAR = {1994},
    NUMBER = {1},
     PAGES = {211--222},
      ISSN = {0040-361X},
   MRCLASS = {90A09},
  MRNUMBER = {1348196},
MRREVIEWER = {Marek Capi\'{n}ski},
       DOI = {10.1137/1139008}}

@article {FS00,
    AUTHOR = {Fleming, W. H. and Sheu, S. J.},
     TITLE = {Risk-sensitive control and an optimal investment model},
      NOTE = {INFORMS Applied Probability Conference (Ulm, 1999)},
   JOURNAL = {Math. Finance},
  FJOURNAL = {Mathematical Finance. An International Journal of Mathematics,
              Statistics and Financial Economics},
    VOLUME = {10},
      YEAR = {2000},
    NUMBER = {2},
     PAGES = {197--213},
      ISSN = {0960-1627},
   MRCLASS = {93E20 (91B28)},
  MRNUMBER = {1802598},
MRREVIEWER = {Daniel Hern\'{a}ndez-Hern\'{a}ndez},
       DOI = {10.1111/1467-9965.00089}}

@article {FM95,
    AUTHOR = {Fleming, Wendell H. and McEneaney, William M.},
     TITLE = {Risk-sensitive control on an infinite time horizon},
   JOURNAL = {SIAM J. Control Optim.},
  FJOURNAL = {SIAM Journal on Control and Optimization},
    VOLUME = {33},
      YEAR = {1995},
    NUMBER = {6},
     PAGES = {1881--1915},
      ISSN = {0363-0129},
   MRCLASS = {93E20 (49L25)},
  MRNUMBER = {1358100},
MRREVIEWER = {E. N. Barron},
       DOI = {10.1137/S0363012993258720}}

@article {HM-72,
    AUTHOR = {Howard, Ronald A. and Matheson, James E.},
     TITLE = {Risk-Sensitive Markov Decision Processes.},
   JOURNAL = {Management Science},
  FJOURNAL = {Management Science},
    VOLUME = {18},
      YEAR = {1972},
    NUMBER = {7},
     PAGES = {356--369}}

@article {Kaise-06,
    AUTHOR = {Kaise, Hidehiro and Sheu, Shuenn-Jyi},
     TITLE = {On the structure of solutions of ergodic type {B}ellman
              equation related to risk-sensitive control},
   JOURNAL = {Ann. Probab.},
  FJOURNAL = {The Annals of Probability},
    VOLUME = {34},
      YEAR = {2006},
    NUMBER = {1},
     PAGES = {284--320},
   MRCLASS = {60J60 (35R60 49L20 60H10 93E20)},
  MRNUMBER = {2206349},
MRREVIEWER = {Richard H. Stockbridge},
       DOI = {10.1214/009117905000000431},
       URL = {https://doi-org.ezproxy.lib.utexas.edu/10.1214/009117905000000431}}

@book{Merton,
    AUTHOR = {Merton, R. C. },
     TITLE = {Continuous-Time Finance},
    SERIES = {},
    VOLUME = {},
 PUBLISHER = {Blackwell, Cambridge},
   ADDRESS = {Cambridge},
      YEAR = {1990},
  MRNUMBER = {}}

@book {MH12,
    AUTHOR = {Majda, Andrew J. and Harlim, John},
     TITLE = {Filtering complex turbulent systems},
 PUBLISHER = {Cambridge University Press, Cambridge},
      YEAR = {2012},
     PAGES = {x+357},
      ISBN = {978-1-107-01666-8},
   MRCLASS = {76-02 (35Q35 60G35 76F20 93E11)},
  MRNUMBER = {2934167},
MRREVIEWER = {Lennaert van Veen},
       DOI = {10.1017/CBO9781139061308}}

@article {Menaldi-05,
    AUTHOR = {Menaldi, Jos\'{e}-Luis and Robin, Maurice},
     TITLE = {Remarks on risk-sensitive control problems},
   JOURNAL = {Appl. Math. Optim.},
  FJOURNAL = {Applied Mathematics and Optimization},
    VOLUME = {52},
      YEAR = {2005},
    NUMBER = {3},
     PAGES = {297--310},
   MRCLASS = {49L20 (60H10 60H30 60J60 90C39 93E20)},
  MRNUMBER = {2174017},
MRREVIEWER = {Pedro Mar\'{i}n Rubio},
       DOI = {10.1007/s00245-005-0829-y},
       URL = {https://doi-org.ezproxy.lib.utexas.edu/10.1007/s00245-005-0829-y}}

@article {QS08,
    AUTHOR = {Quaas, Alexander and Sirakov, Boyan},
     TITLE = {Principal eigenvalues and the {D}irichlet problem for fully
              nonlinear elliptic operators},
   JOURNAL = {Adv. Math.},
  FJOURNAL = {Advances in Mathematics},
    VOLUME = {218},
      YEAR = {2008},
    NUMBER = {1},
     PAGES = {105--135},
   MRCLASS = {35J60 (35J65 35P30)},
  MRNUMBER = {2409410},
MRREVIEWER = {Fabiana Leoni},
       DOI = {10.1016/j.aim.2007.12.002},
       URL = {https://doi-org.ezproxy.lib.utexas.edu/10.1016/j.aim.2007.12.002}}

@article{Sirakov,
    AUTHOR = {Sirakov, Boyan},
    TITLE = {Some estimates and maximum principles for weakly coupled systems of elliptic PDE},
    JOURNAL = {Nonlinear Analysis: Theory, Methods $\&$ Applications},
     VOLUME = {70},
       YEAR = {2009},
     NUMBER = {8},
      PAGES = {3039 -- 3046},
       IISN = {0362-546X},
        DOI = {https://doi.org/10.1016/j.na.2008.12.026},
        URL = {http://www.sciencedirect.com/science/articlepii/S0362546X0800847X}}

@article {Speyer,
	author={J. Speyer},
	journal={IEEE Transactions on Automatic Control},
	title={An adaptive terminal guidance scheme based on an exponential cost criterion with application to homing missile guidance},
	year={1976},
	volume={21},
	number={3},
	pages={371-375},
	doi={10.1109/TAC.1976.1101206},
	ISSN={2334-3303}}

@article{SP19,
  AUTHOR = {Pradhan, Somnath},
   TITLE = {Risk-Sensitive Ergodic Control of Reflected Diffusion Processes in Orthant},
 JOURNAL = {Appl. Math. Optim.},
FJOURNAL = {Applied Mathematics and Optimization},
  VOLUME = {83},
    YEAR = {2019},
   PAGES = {1739--1764},
     DOI = {10.1007/s00245-019-09606-w},
     URL = {https://doi.org/10.1007/s00245-019-09606-w},}

@book {SZ94,
    AUTHOR = {Sethi, Suresh P. and Zhang, Qing},
     TITLE = {Hierarchical decision making in stochastic manufacturing
              systems},
    SERIES = {Systems \& Control: Foundations \& Applications},
 PUBLISHER = {Birkh\"{a}user Boston, Inc., Boston, MA},
      YEAR = {1994},
     PAGES = {xvi+419},
      ISBN = {0-8176-3735-4},
   MRCLASS = {90-02 (90B50 93E20)},
  MRNUMBER = {1301778},
MRREVIEWER = {Agostino Villa},
       DOI = {10.1007/978-1-4612-0285-1}}

@article {YKI04,
    AUTHOR = {Yin, G. and Krishnamurthy, Vikram and Ion, Cristina},
     TITLE = {Regime switching stochastic approximation algorithms with
              application to adaptive discrete stochastic optimization},
   JOURNAL = {SIAM J. Optim.},
  FJOURNAL = {SIAM Journal on Optimization},
    VOLUME = {14},
      YEAR = {2004},
    NUMBER = {4},
     PAGES = {1187--1215},
      ISSN = {1052-6234},
   MRCLASS = {62L20 (60J10 60J27 93E10)},
  MRNUMBER = {2112970},
MRREVIEWER = {Michel Bena\"{\i}m},
       DOI = {10.1137/S1052623403423709}}
%\begin{thebibliography}{99}
%\bibitem{Sirakov} Sirakov, B. Some estimates and maximum principles for weakly coupled systems of elliptic PDE.Nonlinear Analysis: Theory, Methods and Applications, Elsevier, 2009, 70 (8), pp.3039-3046. hal-00351672
%\end{thebibliography}

\end{document}